\numberwithin{equation}{section} 
\theoremstyle{plain}
\newcounter{nonum}
\def\AA{\mathbb{A}}
\def\BB{{B}}
\def\CC{\mathbb{C}}
\def\DD{B}
\def\GG{\mathrm{G}}
\def\HH{\mathrm{H}}
\def\LL{\mathrm{L}}
\def\MM{\mathrm{M}}
\def\NN{\mathrm{N}}
\def\PP{\mathrm{P}}
\def\QQ{\mathbb{Q}}
\def\QQQ{\mathrm{Q}}
\def\RR{\mathbb{R}}
\def\TT{\boldsymbol{\Theta}}
\def\VV{\mathrm{V}}
\def\ZZ{\mathbb{Z}} 
\def\A{{ A}}
\def\B{{ B}}
\def\C{{ C}}
\def\D{{ D}}
\def\E{{ E}}
\def\F{{ F}}
\def\G{{ G}}
\def\H{{ H}}
\def\J{{ J}}
\def\K{{ K}}
\def\M{{ M}}
\def\N{{ N}}
\def\P{{ P}}
\def\SS{{\rm S}}
\def\U{{ U}}
\def\W{{ W}}
\def\Cc{\EuScript{C}}
\def\Oo{\EuScript{O}}
\def\La{\Lambda}
\def\Om{{\it\Omega}}
\def\De{{\it\Delta}}
\def\Ce{{\it\Gamma}}
\def\Si{\Sigma}
\def\si{\sigma}
\def\ta{\widetilde{\tau}} 
\def\be{\mu} 
\def\la{\lambda} 
\def\el{{\it\La}_{\rho}} 
\def\pp{\textsf{p}}
\def\qq{\textsf{q}}
\def\at{\widetilde{ {\it\La} }}
\def\apt{ {\it\La} }
\def\bt{\widetilde{\li}}
\def\ph{\phi}
\def\sie{\omega}
\def\li{\mu}
\def\ga{\g}
\def\a{\alpha} 
\def\b{\beta}
\def\d{\delta}
\def\e{\varepsilon}
\def\g{\gamma}
\def\h{\varphi}
\def\k{\kappa}
\def\l{\lambda}
\def\n{\eta}
\def\p{\mathfrak{p}}
\def\s{\sigma}
\def\t{\theta}
\def\w{\varpi}
\def\aa{\mathfrak{a}}
\def\bb{\mathfrak{b}}
\def\ii{\iota}
\def\kk{\boldsymbol{k}}
\def\ll{\boldsymbol{l}}
\def\ss{r}
\def\tt{{\sf t}}
\def\>{\geqslant}
\def\<{\leqslant}
\def\Hom{{\rm Hom}}
\def\Mat{\boldsymbol{{\sf M}}}
\def\GL{{\rm GL}}
\def\SL{{\rm SL}}
\def\Sp{{\rm Sp}}
\def\Gal{{\rm Gal}}
\def\Ind{{\rm Ind}}
\def\ip{{\rm Ind}}
\def\St{{\rm St}}
\def\MW{{\rm MW}}
\def\tdt{\times\dots\times}
\def\odo{\otimes\dots\otimes}
\def\BJ{\boldsymbol{\J}}
\def\bk{\boldsymbol{\kappa}}
\def\bs{\boldsymbol{\rho}}
\def\bl{\boldsymbol{\lambda}}
\def\bx{\boldsymbol{\xi}}
\def\Nrd{{\rm Nrd}}
\def\trd{{\rm trd}}
\def\bc{\boldsymbol{\sf b}}
\def\id{{1}}
\def\LJ{\textbf{\textsf{LJ}}}
\def\ad{t}
\def\pl{u}
\definecolor{orange}{RGB}{255,127,0}
\definecolor{dgreen}{RGB}{0,192,0}
\newcommand{\CA}{\EuScript{A}}
\newcommand{\CF}{k}
\def\frS{\mathfrak{S}}
\def\diag{\mathrm{diag}}
\def\idots{\reflectbox{$\ddots$}}% anti-diagonal dots
\def\rd{\,\mathrm{d}}% measure
\def\JL{\mathrm{JL}}
\def\bsl{\backslash}
\renewcommand{\Re}{\mathrm{Re}}
\long\def\MSC#1\EndMSC{\def\arg{#1}\ifx\arg\empty\relax\else
     {\par\narrower\noindent%
     2010 Mathematics Subject Classification: #1\par}\fi}
\long\def\KEY#1\EndKEY{\def\arg{#1}\ifx\arg\empty\relax\else
	{\par\narrower\noindent Keywords and Phrases: #1\par}\fi}
\title{Discrete series representations of quaternionic $\GL_n(D)$
with symplectic periods}
\author{Nadir Matringe}
\address{Institute of Mathematical Sciences,
  NYU Shanghai,
  3663 Zhongshan Road
  North Shanghai,
  200062,
  China
  and
  Institut de Mathématiques de Jussieu-Paris Rive Gauche,
  Université Paris Cité 75205,
  Paris,
  France}
\email{nrm6864@nyu.edu, matringe@img-prg.fr}
\author{Vincent S\'echerre}
\address{
Laboratoire de Math\'emati\-ques de Versailles, 
UVSQ, 
CNRS, 
Universit\'e Paris-Saclay,
78035 Versailles, France}
\email{vincent.secherre@uvsq.fr}
\author{Shaun Stevens}
\address{
  School of Engineering,
  Mathematics and Physics,
  University of East Anglia,
  Norwich NR4 7TJ,
  United Kingdom}
\email{shaun.stevens@uea.ac.uk}
\author{Miyu Suzuki}
\address{
  Department of Mathematics,
  Faculty of Science,
  Kyoto University,
  Kitashirakawa Oiwake-cho,
  Sakyo-ku,
  Kyoto 606-8502,
  Japan}
\email{suzuki.miyu.4c@kyoto-u.ac.jp}
\begin{abstract}
For a non-Archimedean locally compact field $F$ of odd residue characteristic 
and~cha\-racteristic $0$,
we prove a conjecture of D.~Prasad predicting that,
for an integer $n \> 1$ and a non-split quaternionic $F$-algebra $D$,
a discrete series representation of $\GL_n(D)$ has a symplectic period
if and only if it is cuspidal and 
its Jacquet--Langlands~trans\-fer to $\GL_{2n}(F)$ is non-cuspidal.
\end{abstract} 
\begin{document} 

\maketitle

\MSC 22E50, 11F70
\EndMSC
\KEY 
Cuspidal representations, 
Discrete series,
Jacquet--Langlands correspondence,
Quaternionic forms,
Symplectic periods
% \hspace{2.3cm}\textcolor{green}{\bf\today}
\EndKEY

\thispagestyle{empty}

\section{Introduction}

\subsection{}
\label{mickaelmartinez}

Let $F$ be a non-Archimedean locally compact field,
and~$D$ be a non-split quaternion algebra of centre $F$.
Fix an integer $n \> 1$, and set $G = \GL_n(D)$.
This is an inner form~of~$\GL_{2n}(F)$,~which
can be equipped with an involution $\s$ whose fixed
point subgroup $H$ is equal to $\Sp_n(D)$,
the~non-quasi-split inner form of the symplectic~group~$\Sp_{2n}(F)$. 
{In the framework of the local relative Langlands program, one is
interested in the classi\-fi\-cation of the 
irreducible (smooth, complex)~re\-pr\-esentations of $G$ 
which are distinguished by~$H$,
that is, 
which admit non-zero $H$-invariant linear forms.
The split version where the pair $(G,H)$ is replaced by
$(G',H')=(\GL_{2n}(F),\Sp_{2n}(F))$~has
been thoroughly investigated by Jacquet--Rallis \cite{JR},
Heumos-Rallis \cite{Heumos-Rallis}
and Offen \cite{Offen-Sp1,Offen-Sp2}, both locally and globally.
The pair $(G',H')$ is a vanishing pair  in the sense that there is~no cuspidal
represen\-tation of $G'$ distinguished by $H'$,
both locally and globally.
The striking difference~with the pair 
$(G,H)$ under consideration in this work
is that the latter does not share this property,
as~obser\-ved~by Verma in \cite{Verma}.}
More precisely, 
for~dis\-crete series representations~of~$G$,
{Dipendra} Prasad~pro\-posed the following conjecture
(see also \cite{Verma}~Con\-jec\-ture~7.1)
stated in~terms~of~the local Jacquet--Lang\-lands correspondence,
a bijection between the discrete series~of $G$ and $\text{GL}_{2n}(F)$.
For the~defini\-tion of the notation $\St_2$, see \S\ref{DSclassiflocal} below.

\begin{conj}
\label{CONJPRASAD}
\begin{enumerate}
\item
There is a discrete series representation of $\GL_n(D)$ distinguished by 
$\Sp_n(D)$ if and only if $n$ is odd. 
\item
Suppose that the integer $n$ is odd.
The discrete series representations of $\GL_n(D)$ 
which~are distingui\-shed by $\Sp_n(D)$ are exactly the cuspidal 
representations of $\GL_n(D)$ whose Jacquet--Lang\-lands transfer to 
$\GL_{2n}(F)$ is of the form $\St_2(\tau)$ for some 
cus\-pi\-dal~repre\-sen\-tation $\tau$ of $\GL_{n}(F)$. 
\end{enumerate}
\end{conj}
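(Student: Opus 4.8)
The plan is to establish both directions of the equivalence in Conjecture~\ref{CONJPRASAD}(2) by combining the theory of types (Bushnell--Kutzko, Sécherre--Stevens for inner forms) with a careful analysis of the double cosets $\Sp_n(D)\backslash\GL_n(D)/J$, where $J$ is the compact open subgroup carrying a maximal simple type $(J,\lambda)$ contained in the cuspidal representation $\pi$. The distinction of $\pi$ by $H=\Sp_n(D)$ is governed, via Frobenius reciprocity and a Mackey-type decomposition, by the existence of some $g\in G$ such that $\mathrm{Hom}_{H\cap {}^g\!J}(\mathbf{1},{}^g\!\lambda)\neq 0$; so the crux is to identify which twisted intertwining spaces are nonzero and to control, by a volume/exhaustion argument, that no other double coset contributes a stray invariant functional.

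First I would set up the relative situation: fix $\s$ so that $G^\s=\Sp_n(D)$, describe the $\s$-stable simple strata and the associated $\s$-self-dual semisimple characters, and show that if $\pi$ is $H$-distinguished then its underlying simple type can be chosen $\s$-self-dual up to conjugacy --- this is the ``distinction forces symmetry'' step, analogous to the arguments in the $p$-adic and unitary-period settings. Second, I would compute, for such a $\s$-self-dual type, the space of $H\cap J$-invariant vectors in $\lambda$: this reduces, through the structure of $\lambda$ as a Heisenberg-Weil representation inflated from a finite reductive quotient, to a finite-group distinction problem, namely the distinction of a cuspidal representation of $\GL_m(k_D)$-type groups by a symplectic or orthogonal subgroup, whose answer is dictated by a parity condition. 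Third --- and this is where the arithmetic invariant $n$ odd enters --- I would translate that parity condition through the Jacquet--Langlands transfer, using the explicit description of $\St_2(\tau)$-type transfers in terms of the invariants (ramification, degree over $F$, the associated maximal field extension) of the simple character: the condition that the finite-level distinction is nonzero matches exactly the condition that the transfer is $\St_2(\tau)$ rather than cuspidal.

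For the converse, assuming the Jacquet--Langlands transfer of $\pi$ is $\St_2(\tau)$, I would run the construction backwards: the shape of the transfer pins down the endo-class and the torsion parameters of the simple character underlying $\pi$, forcing it to be (conjugate to) a $\s$-self-dual one of the right type, and then the finite-group computation produces a nonzero $H\cap J$-invariant form which I would integrate up to a genuine $H$-invariant linear form on $\pi$, checking convergence using cuspidality (matrix coefficients compactly supported modulo centre) together with the relevant closed-orbit geometry of $H\backslash G$.

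The main obstacle I anticipate is the first reduction step --- showing that $H$-distinction forces the type to be $\s$-self-dual and controlling the double coset space precisely enough that exactly one orbit contributes. In the quaternionic (non-split, non-quasi-split) case the symmetric space $H\backslash G$ and the relevant Galois cohomology $H^1$ are more delicate than over a field, so the orbit count and the ``no stray functionals'' vanishing will require genuinely new work with $\s$-self-dual semisimple strata and their intertwining, rather than a direct citation of the split-case arguments.
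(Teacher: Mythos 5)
Your outline diverges from the paper's proof in two important ways, one of which conceals a genuine gap. First, the direction ``$H$-distinguished $\Rightarrow$ non-cuspidal transfer'' is handled in the paper not by a local analysis of double cosets at all, but by globalization: one realizes $\pi'$ as a local component of a cuspidal automorphic representation of $\SL_n(\DD)$ with nonzero $\Sp_n(\DD)$-period (via Prasad--Schulze-Pillot or Gan--Lomel\'i), lifts to $\GL_n(\DD)$, transfers globally, and derives a contradiction at a split place from Offen--Sayag's non-distinction of generic representations of $\GL_{2n}$ by $\Sp_{2n}$. This completely avoids the ``volume/exhaustion'' and ``no stray functionals'' control you flag as the main obstacle; indeed the paper never needs to show that distinction forces the type to be $\s$-self-dual, nor to enumerate contributing $H$-orbits. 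Your proposed route for this direction is genuinely harder and it is far from clear it would close.

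For the converse, the paper's route is type-theoretic and broadly compatible with your second and third steps, but you are missing the central technical difficulty: once one has a $\s$-self-dual maximal simple character $\t$ (which the paper produces by transferring Zou's orthogonal-involution result on $\GL_d(F)$ into $\GL_n(D)$ --- a nontrivial input you do not mention) and a $\bk$ with $\bk^{\s\vee}\simeq\bk$, the space $\Hom_{\BJ_\t\cap G^\s}(\bk,\chi)$ is nonzero only for a single quadratic character $\chi$ of $\BJ_\t\cap G^\s$, and one must prove $\chi$ is \emph{trivial} in order to splice the finite-group distinction of $\varrho$ by $\U_m(\ll/\ll_0)$ with distinction of $\bk$. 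The paper's proof that $\chi=1$ (Proposition~\ref{chitrivial}) is an indirect argument: if $\chi\neq 1$ one could manufacture a distinguished cuspidal $\pi'$ with \emph{cuspidal} transfer, contradicting the forward direction. Your sketch proceeds as if the Heisenberg--Weil layer contributes no obstruction, which is false in general; this is a gap. Also, for a cuspidal $\pi$ compactly induced from $(\BJ,\bl)$, the period is produced immediately by Mackey's formula for compact induction versus a closed subgroup --- there is no integration or convergence to check, so that part of your plan is a misdirection. Finally, note that the paper proves the conjecture only for cuspidal $\pi$; the full discrete-series statement (part (2) as written) is not established.
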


Note that,
if one replaces the groups $G$ and $H$ by their split forms $\GL_{2n}(F)$ 
and $\Sp_{2n}(F)$,
then \cite{OffenSayagJFA08}~Theo\-rem~1 implies that
there is no generic (in particular, no discrete series)
representation~of
$\GL_{2n}(F)$ distinguished by $\Sp_{2n}(F)$,
whatever the parity of $n$.

\subsection{}
\label{saintpaul}

In this article, we prove the following results.
Let $p$ be the residue characteristic of $F$.

\begin{theo}
\label{MAINTHM1}
Suppose that $F$ has odd residue characteristic
and that the Jacquet--Langlands transfer of any cuspidal representation
of $\GL_n(D)$ distinguished by $\Sp_n(D)$ is~non-cuspidal.~Then any
cuspidal representation of $\GL_n(D)$ whose~Jac\-quet--Langlands
transfer is non-cuspidal is~dis\-tinguished by $\Sp_n(D)$.
\end{theo}

It then follows from well-known properties of the Jacquet--Langlands 
correspondence~(see~\S\ref{DSclassiflocal} and \S\ref{JLcor})
that
\begin{itemize}
\item 
there is a cuspidal representation of $\GL_n(D)$ 
distinguished by $\Sp_n(D)$ if and only if~$n$~is~odd, 
\item
if $n$ is odd, 
a cuspidal representation $\pi$ of $\GL_n(D)$ 
is distingui\-shed by $\Sp_n(D)$~if and only if its Jacquet--Lang\-lands
transfer to $\GL_{2n}(F)$ is a discrete series representation of the form
$\St_2(\tau)$,
where $\tau$ is a cus\-pi\-dal~repre\-sen\-tation of $\GL_{n}(F)$
uniquely determined by $\pi$ up to isomorphism.
\end{itemize}

In the case when $F$ has characteristic $0$,
Ver\-ma~(\cite{Verma} Theorem 1.2)
proved,
by using a~globali\-sation argument,~that any cuspidal representation
of $G$ which is distinguished by $H$
has a~non-cus\-pidal~Jacquet--Lang\-lands transfer to $\GL_{2n}(F)$
(see also Theorem \ref{Vermacarqcq} below).
It follows that,
when $F$ has characteristic~$0$ and $p\neq2$,
any cuspidal representation of $\GL_n(D)$ whose~Jac\-quet--Langlands
transfer is non-cuspidal is~dis\-tinguished by $\Sp_n(D)$.

\begin{theo}
\label{MAINTHM2}
Suppose that $F$ has characteristic~$0$.
Then any discrete series~re\-presentation of $\GL_n(D)$ distinguished by 
$\Sp_n(D)$ is cuspidal.
\end{theo}

Putting Theorems \ref{MAINTHM1} and \ref{MAINTHM2}
together with Verma's result, 
we obtain~the following corollary.

\begin{coro}
\label{MAINTHM3}
Suppose that $F$ is a non-Archimedean locally compact field of
characteristic~$0$ and odd residue characteristic.
Then Prasad's Conjecture \ref{CONJPRASAD} holds.
\end{coro}

The proofs of Theorems \ref{MAINTHM1} and \ref{MAINTHM2} use 
quite different tools and methods. 
Let us first explain how we prove Theorem \ref{MAINTHM1}.

\subsection{}
\label{baltimore}

The strategy of the proof of Theorem \ref{MAINTHM1} is,
given a cuspidal representation $\pi$ of $G$ whose Jacquet--Langlands
transfer is non-cuspidal,
to produce a pair~$(\BJ,\bl)$
made of a compact mod centre, open subgroup $\BJ$ of $G$
and an irreducible~re\-pre\-sen\-tation $\bl$ of~$\BJ$ such that:
\begin{itemize}
\item 
$\bl$ is distinguished by $\BJ \cap H$, 
\item
the compact induction of $\bl$ to $\G$ is isomorphic to $\pi$.
\end{itemize}
By a simple application of Mackey's formula,
this will imply that $\pi$ is distinguished by $\H$.~The
construction of a suitable pair $(\BJ,\bl)$
is based on Bushnell--Kutzko's theory of types,
as we~ex\-plain below.

\subsection{}
\label{lapinblanc}

Start with a cuspidal irreducible representation~$\pi$ of $G$.
By \cite{BK,SS}, it is compactly~indu\-ced from a Bushnell--Kutzko type:
this is a pair $(\BJ, \bl)$ with the following properties: 
\begin{itemize}
\item
the group $\BJ$ is open and compact mod centre,
it has a unique maximal compact subgroup $\BJ^0$
and a unique maximal normal pro-$p$-subgroup $\BJ^1$,
\item
the representation $\bl$ of $\BJ$ is irreducible and factors
(non-canonically) as $\bk \otimes \bs$, 
where $\bk$ is a representation of $\BJ$ whose restriction to $\BJ^1$ is 
irreducible and $\bs$ is an irreducible representation of $\BJ$ whose 
restriction to $\BJ^1$ is trivial,
\item
the quotient $\BJ^0/\BJ^1$ is isomorphic to $\text{GL}_m(\ll)$ for some 
integer $m$ dividing $n$ and some finite extension $\ll$ of the residue field
of $F$,
and the restriction of $\bs$ to $\BJ^0$ is the inflation of a cuspidal 
representation $\varrho$ of $\text{GL}_m(\ll)$. 
\end{itemize}

Our first task is to prove that,
if the Jacquet--Langlands transfer of $\pi$ to $\GL_{2n}(F)$ is non-cus\-pi\-dal,
then, 
among all possible Bushnell--Kutzko types $(\BJ,\bl)$ whose compact induction 
to $G$ is isomorphic to $\pi$ (they form a single $G$-conjugacy class),
there is one such that $\BJ$ is stable by~$\s$ and
$\bk$ can be chosen to be distinguished by $\BJ \cap H$.

\subsection{}
\label{par15i}

Assuming this has been done, our argument is as follows:
\begin{enumerate}
\item
The fact that $\bk$ is distinguished by $\BJ \cap H$
together with the decomposition
\begin{equation*}
\Hom_{\BJ \cap H}(\bk \otimes \bs, \CC) \simeq
\Hom_{\BJ \cap H}(\bk,\CC) \otimes \Hom_{\BJ \cap H}(\bs, \CC)
\end{equation*}
implies that $\bk \otimes \bs$ is distinguished by $\BJ \cap H$
if and only if $\bs$ is distinguished by $\BJ \cap H$.
\item
The representation $\bs$ is distinguished by $\BJ\cap\H$
if and only if the cuspidal representation $\varrho$~of $\GL_m(\ll)$ is
distinguished by a unitary group, 
or equivalently,
$\varrho$ is invariant by the non-tri\-vial~automorphism of $\ll/\ll_0$,
where $\ll_0$ is a subfield of $\ll$ over which $\ll$ is quadratic.
\item
The fact that the Jacquet--Langlands transfer of $\pi$ is non-cuspidal 
implies that $\varrho$ is invariant by $\Gal(\ll/\ll_0)$. 
\end{enumerate}

Note that (2) is reminiscent of \cite{Verma} Section 5.
See Section \ref{Sec6} below for more details.

\subsection{}
\label{farouche}

It remains to prove that $\BJ$ and $\bk$ can be chosen as in \S\ref{lapinblanc}.
The construction~of~$\bk$ relies~on the notion of simple character,
which is the core of Bushnell--Kutzko's type theory.~The
cuspidal representation $\pi$ of \S\ref{lapinblanc}
contains a simple character,
and the set of simple characters contai\-ned in~$\pi$ form a single 
$G$-conjugacy class.
We first prove that, 
if the Jacquet--Langlands transfer of $\pi$ to $\GL_{2n}(F)$ is non-cus\-pi\-dal,
then,
among all simple characters~contai\-ned~in $\pi$,
there is a simple character $\t$ such that $\t\circ\s = \t^{-1}$.
Zou \cite{ZouO} proved a similar~result for~cus\-pidal representations
of~$\GL_n(F)$ with respect to an orthogonal involution, 
and we explain how to transfer it to $G$~in~an appropriate manner. 
(Note that,
if the Jacquet--Langlands transfer of $\pi$ is cus\-pi\-dal,
such a $\t$~may not exist.)

Next,
fix a simple character $\t$ as above, 
and let $\BJ$ denote its normalizer in $G$,
which is stable by $\s$. 
A standard construction (see for instance \cite{VSautodual,NRV}) provides us with:
\begin{itemize}
\item 
a representation $\bk$ of $\BJ$ such that the
contragredient of $\bk\circ\s$ is isomorphic to $\bk$, 
\item
a quadratic character $\chi$ of $\BJ\cap\H$ 
such that the vector space $\Hom_{\BJ\cap\H}(\bk,\chi)$ is non-zero. 
\end{itemize}
To prove that the character $\chi$ is trivial,
we show that, 
if $\chi$ were non-trivial,
one could construct an~$H$-distinguished cuspidal representation of $G$
with cuspidal transfer to $\GL_{2n}(F)$, 
thus contra\-dic\-ting the assumption of Theorem~\ref{MAINTHM1}.

Together with the argument of \S\ref{par15i},
this finishes the proof of Theorem~\ref{MAINTHM1}.

\subsection{}
\label{par18now}

Now let us go back to \S\ref{saintpaul},
assuming that $n$ is odd.
{Associated with any cuspidal represen\-tation $\pi$ of $\GL_n(D)$
whose Jacquet--Lang\-lands transfer to $\GL_{2n}(F)$ is non-cuspidal,
there exists~a unique cuspidal representation $\tau$ of $\GL_n(F)$ such that
the transfer of $\pi$ is equal to $\St_2(\tau)$.
This defines a map}
\begin{equation*}
\pi\mapsto\tau
\end{equation*}
from 
cuspidal representations of $\GL_n(D)$ whose Jacquet--Lang\-lands
transfer to $\GL_{2n}(F)$ is non-cuspidal to 
cuspidal representations of $\GL_n(F)$,
and this map is a bijection (see Remark \ref{saccard}).~As
suggested by Prasad,
the inverse of this map can be thought of as a
`non-abelian'~base change,~de\-noted $\bc_{D/F}$,
from cuspidal representations of $\GL_n(F)$
to those~of $\GL_n(D)$. 
For instan\-ce, if~$n=1$,
the map $\bc_{D/F}$ is just $\chi \mapsto \chi\circ\Nrd$,
where $\Nrd$ is~the~redu\-ced~norm from~$D^\times$
to~$\F^\times$~and $\chi$ ranges over the set of all characters of 
$\F^\times$.
{When $F$ has characteristic~$0$ and~odd residue characteristic,
it follows from Corollary \ref{MAINTHM3} that}
the im\-age of the map $\bc_{D/F}$ is made of those cuspidal
represen\-ta\-tions
of $\GL_n(D)$ which are dis\-tinguished by $\Sp_n(D)$.

A type theoretic, ex\-pli\-cit description of $\bc_{D/F}$ 
can be extracted from \cite{BHJL3,SSCMAT19,Dotto},
at least~up~to inertia, that is: 
given a cuspidal representation $\tau$ of $\GL_n(F)$,
described as the compact induction of a~Bushnell--Kutzko type,
one has an explicit~des\-cription of the type of the inertial
class~of the cuspidal representation
$\bc_{D/F}(\tau)$ in terms of the type of $\tau$.

The case of cuspidal representations of depth $0$ has been considered
in \cite{Verma} Section 5.
The~ex\-pli\-cit description of $\bc_{D/F}$
provided by \cite{Verma} Proposition 5.1, Remark 5.2 is somewhat
incomplete
(see Remarks~\ref{buffetfroid1} and \ref{buffetfroid2} below). 
In Section \ref{thecircle},
thanks to \cite{SZlevel01,SZlevel02,BHlevel0},~we
provide a full~des\-cription of $\bc_{D/F}$~for~cus\-pidal repre\-sentations of 
depth $0$. 

\subsection{}

We now explain how we prove Theorem \ref{MAINTHM2}.
The proof is based on
an idea from the~first author's~pre\-vious work \cite{MatJFA}: 
we study the functional equation of local intertwining periods~in~or\-der to 
reduce the study of distinction of discrete series representations to the 
cuspidal case.~We note that this idea has been successfully applied in
\cite{SX} as well.
However in the case at hand,~the argument here is different from,
and in fact more involved than,
the one used in \cite{MatJFA}~and~\cite{SX}, sin\-ce
the result to prove is of different flavour, and we cannot avoid using
intertwining periods attached to orbits which are neither closed, nor open,
hence using the results of \cite{MOY}.

Let $\pi$ be an irreducible discrete series representation of $G$.
Associated with it ({via} the~classifi\-ca\-tion of the discrete
series of $G$ which we recall in \S\ref{DSclassiflocal}),
there are a divisor $m$ of $n$,
a cuspidal ir\-reducible representation $\rho$ of $\GL_{n/m}(D)$ and
an integer $r\in\{1,2\}$,
such that,
if we set
\begin{equation}
\label{defIs}
  I(s,\rho) = \rho\nu^{sr(m-1)} \times \rho\nu^{sr(m-3)}
  \times \cdots \times \rho\nu^{sr(1-m)}
\end{equation}
for any $s\in\CC$,
where $\nu$ denotes the character ``normalized absolute value of the
reduced norm'' of $\GL_{n/m}(D)$
(see Sec\-tion \ref{apfelstrudel} for the notation),
then $\pi$ is the unique irreducible quotient of $I(-1, \rho)$. 
Note that $\pi$ is cuspidal if and only if $m=1$.
We set $\ad=n/m$.

\subsection{}

The proof of Theorem \ref{MAINTHM2} is by contradiction,
assuming that $\pi$ is distinguished by $H$ and $m\>2$.
First,
by using Offen's geometric lemma \cite{OJNT}
and Verma \cite{Verma} Theorem 1.2,
we prove that the distinction of $\pi$ 
implies that $\rho$ is distinguished by $\Sp_{\ad}(D)$ and $r=2$
(Proposition \ref{PhilippedOrleans} and Corollary \ref{Anker}).

Let $M$ be the standard Levi subgroup 
$\GL_{\ad}(D) \times\cdots\times \GL_{\ad}(D)$
and $P$ be the standard~parabo\-lic~subgroup 
generated by $M$ and upper triangular matrices of $G$.
Fix an $\n\in\G$ such that $P\n\H$ is~open in $G$ and
$M \cap \eta H\eta^{-1}$ is equal to
$\Sp_{\ad}(D) \times\cdots\times \Sp_{\ad}(D)$.
Since $\rho$ is distinguished, 
there~is a non-zero $M \cap \eta H\eta^{-1}$-invariant
linear form $\be$ on the inducing~repre\-sentation 
of \eqref{defIs}.
For any flat section $\varphi_s\in I(s, \rho)$~(see \S\ref{DuchesseduMaine}
for a definition),
the integral
\begin{equation*}
J(s, \varphi_s, \be) = \int_{(\eta^{-1}P\eta \cap H)\bsl H} \be(\varphi_s(\eta h))\,\rd h
\end{equation*}
converges for $\Re(s)$ in a certain right half plane,
has meromorphic continuation to $\CC$
and defines an $H$-invariant~li\-near form
$J(s,\cdot,\be)$ on $I(s, \rho)$,
called the \emph{open intertwining period}.

Let $M(s,w)$ denote the standard intertwining operator
from $ I(s, \rho)$ to $I(-s, \rho)$
associated~to the long\-est
element $w$
of the symmetric group $\frS_m$.
As $I(s, \rho)$ is irreducible for generic $s$ and the spa\-ce 
$\Hom_{H}(I(s, \rho), \CC)$ has dimension at most $1$
for~such~$s$ by Verma \cite{Verma} Theorem~1.1,~there~is~a
meromorphic function $\alpha(s,\rho)$ satisfying
\begin{equation*}
\label{gingerale} 
  J(-s, M(s, w)\h_s, \be) = \a(s, \rho)J(s, \h_s, \be)
\end{equation*}
for any flat section $\h_s\in I(s, \rho)$.
The next assertion is the key to the proof of Theorem \ref{MAINTHM2}.
    
\begin{prop}[Corollary \ref{cor:gamma}]
\label{corgammaintro} 
The meromorphic function $\alpha(s, \rho)$ is holomorphic and non-zero at 
$s=1$.
\end{prop}

Let $M^*\in \Hom_{G}(I(-1, \rho), I(1,\rho))$
be a non-zero~in\-tertwining operator.
Since $m\>2$,
its~image is~isomorphic to $\pi$ and $M^* M(1, w)$ is zero.
Since $\Hom_{H}(I(-1, \rho), \CC)$ has dimension $1$
by Proposi\-tion \ref{PhilippedOrleans},
there is a ${\it\La}\in\Hom_{H}(\pi, \CC)$ such that
$J(-1, \h, \be)={\it\La}M^*\h$ for all $\varphi\in I(-1, \rho)$.
Then~we see that 
\[
  \alpha(1, \rho) J(1, \h , \be) = J(-1, M(1, w)\h , \be)
  = {\it\La}M^* M(1, w)\h = 0
\] 
for $\varphi\in I(1,  \rho)$.
Combined with Proposition \ref{corgammaintro},
this implies that $J(1, \varphi, \be)=0$ for all $\varphi\in I(1, \rho)$.
This contradicts Proposition \ref{Felzl},
which asserts that $J(1, \cdot, \be)$ is a non-zero linear form. 

\subsection{}

For the proof of Proposition \ref{corgammaintro},
we compute $\alpha(s, \rho)$ via a global method. 
First we~globa\-lise $\rho$.
There are
\begin{itemize}
\item 
a totally imaginary number field $k$, with ring of adèles $\AA$, 
{such that there is a unique place $\pl$ above $p$ and $k_{\pl}$,
the completion of $k$ at $\pl$,
is equal to $F$,}
\item 
a quaternion algebra $\DD$ over $k$ such that $\DD_{\pl}=\DD\otimes k_{\pl}$ 
is equal to $D$, 
\item
a cuspidal automorphic representation $\Pi$ of
$\GL_{\ad}(\DD\otimes_{k}\AA)$ 
with a non-zero $\Sp_{\ad}(\DD\otimes_{k}\AA)$-pe\-riod~and
whose local component at $\pl$ is isomorphic to $\rho$. 
\end{itemize}
Given $s\in\CC$,
let $\h$ be a non-zero automorphic form on
$\GL_n(\DD\otimes_k\AA)$ in the parabolically induced representation
$\Pi \tdt \Pi$
which decomposes into a product of local factors $\h_v$.
Similarly to the local setting,
we can define the global intertwining period by the meromorphic continuation 
of
\begin{equation*}
  J(s, \h) = \int_{(\eta^{-1}\PP\eta\cap \HH)(\AA)\bsl \HH(\AA)}
  \left( \int_{(\MM\cap\n\HH\n^{-1})(k)\bsl (\MM\cap\n\HH\n^{-1})(\AA)}
    \h (m\eta h)\,\rd m \right) e^{\langle s\rho_P, H_P(\eta
    h)\rangle}\,\rd h.
\end{equation*}
(For unexplained notations, see the later sections.)
It has the product decomposition
\begin{equation*}
J(s, \h) = \prod_v J_v(s, \ph_v, \be_v)
\end{equation*}
and, for each $v$, we have the functional equation
\begin{equation*}
J_v(-s, M_v(s, w)\h_v, \be_v) = \a_v(s)J_v(s, \h_v, \be_v)
\end{equation*}
where $\a_v(s)$ is a meromorphic function such that $\a_{\pl}(s)=\a(s, \rho)$. 
In \S\ref{sec:intertwining-split}, we show~that~$\a_v(s)$ can be written in terms
of Rankin--Selberg $\g$-factors {for all $v$ at which $\DD_v$ is split.} 
We also~obtain a formula for $J_v(s, \ph_v, \be_v)$ at almost all places. 
Using the functional equation of global intertwin\-ing~periods of 
Section \ref{sec:global}, we obtain an equality of the form
\begin{equation}
\label{expresso}
  \prod_{v\in S} \a_v(s)
  = \prod_{v\in S}\prod_{1\leq i<j\leq m} 
 {\g(2(j-i)s+2, \Si_{v}^{\phantom{\vee}}, \Si_{v}^\vee,\psi_v^{\phantom{\vee}})}
  {\g(2(j-i)s, \Si_{v}^{\phantom{\vee}}, \Si_{v}^\vee,\psi_v^{\phantom{\vee}})}^{-1}
  \end{equation}
where $S$ is a finite set of finite places of $k$ such that $\DD_v$ splits
for all $v\not\in S$ and $\Si$ is a certain~cus\-pidal~automorphic 
representation of $\GL_{\ad}(\AA)$ associated with $\Pi$ via the global
Jacquet--Lang\-lands correspondence. 
We deduce from \eqref{expresso}
{and the fact that $\pl$ is the only place of $\CF$ above $p$
that 
\[
\a(s, \rho) = c\cdot 
  \prod_{1\leq i<j\leq m} {\g(2(j-i)s+2, \Si_{\pl}^{\phantom{\vee}},
    \Si_{\pl}^\vee,\psi_{\pl}^{\phantom{\vee}})}
  {\g(2(j-i)s, \Si_{\pl}^{\phantom{\vee}}, 
    \Si_{\pl}^\vee,\psi_{\pl}^{\phantom{\vee}})}^{-1} 
\]}
{for some constant $c\in \CC^\times$.} 
Hence we see that $\alpha(1, \rho)\neq0$. 

\subsection{}

Let us comment on the assumption on the residue characteristic in Theorem
\ref{MAINTHM1}.
The~only places where we~use~this~as\-sumption are:
Proposition \ref{camille}, Lemma \ref{kappasigmaselfdual} and
Proposition \ref{chitrivial}.~Pro\-po\-si\-tion \ref{camille} is the main~dif\-ficulty:
this proposition might not hold when $p=2$.

\subsection{}

Let us comment on the assumption on the characteristic of $F$
in Theorem \ref{MAINTHM2}.
Our~proof uses the theory of local and global intertwining periods,
which are only available~in characteristic $0$ so far,
although at least locally when $p\neq 2$ this restriction is probably
removable just by~check\-ing the original sources.
We also use this assumption when we apply 
Theorem \ref{Vermacarqcq}
in the proof of Corollary \ref{Anker}
(see also Remark \ref{VermacarqcqZ}).

\section*{Acknowledgements}

We thank D.~Prasad for giving his talk at the conference 
\textit{Representations of $p$-adic Groups~and the Langlands Correspondence, in
honour of Colin Bushnell},
September 2024,~which brought our attention to his
Conjecture~\ref{CONJPRASAD}, and in particular for suggesting that we~res\-trict
to $\SL_n(D)$ in~the proof of Theorem \ref{Vermacarqcq}. 

Nadir Matringe thanks Kyoto University for its hospitality,
as parts of this work were elaborated~while~he was invited there.
Vincent Sécherre was supported by the Institut Universitaire~de France.
Shaun Stevens was suppor\-ted by the Engineering and Physical Sciences 
Research Coun\-cil (grant~EP/V061739/1).
Miyu Suzuki was supported by JSPS KAKENHI (JP22K13891 and JP23K20785).

\section{Quaternion algebras and symplectic groups}

\subsection{}
\label{Kaboul}

Let $F$ be a field.
Given any finite-dimensional central division $F$-algebra $\De$
and any~inte\-ger~$n\>1$, 
we write $\Mat_n(\De)$ for the central simple $F$-algebra made of
all $n\times n$ matrices~with~en\-tries~in $\De$
and $\GL_n(\De)$ for the group of its invertible elements.
The latter is the group~of~$F$-ra\-tio\-nal points of a connected
reductive algebraic group defined over $F$.

\subsection{}
\label{Seville}

Fix an integer $n\>1$,
and set $A=\Mat_n(\De)$ and $G=\GL_n(\De)$.
We write~$\Nrd_{A/F}$ and $\trd_{A/F}$ for the reduced norm and trace
of~$A$ over $F$, respectively.

Let $(n_1, n_2, \ldots, n_r)$ be a composition of $n$,
that is,
a family of positive integers whose sum is equal to $n$.
Associated with it,
there are the standard Levi subgroup
\begin{equation*}
\M = \GL_{n_1}(\De)\tdt\GL_{n_r}(\De)
\end{equation*}
considered as a subgroup of block diagonal matrices of $G$,
and the standard parabolic subgroup $\P$ of $G$ generated by $\M$ and all 
upper triangular matrices.

Denoting by $\N$ the unipotent radical of $\P$,
we have the standard Levi decomposition $\P=\M\N$.

\subsection{}
\label{KyberGate}
  
Let $D$ be a quaternion~algebra over $F$, 
that is,
a central simple $F$-algebra~of dimension~$4$.
The algebra $D$ is either isomorphic~to $\Mat_2(F)$
-- in~which case we say that it is split --
or a division algebra.
In both cases,
it is equipped~with the canonical anti-in\-vo\-lu\-tion
\begin{equation}
\label{antiinvD}
x \mapsto \overline{x}=\trd_{D/F}(x)-x.
\end{equation}
One has the identity $x\overline{x}=\overline{x}x=\Nrd_{D/F}(x)$
for any $x\in D$.
Note that an element of~$D$ is~in\-verti\-ble~if and only if its reduced norm
is non-zero.

Given an $a\in A = \Mat_n(D)$,
for an $n\>1$, 
we write ${}^{\tt}a$ for the transpose of $a$ with respect to the
antidiagonal
and $\overline{a}$ for the matrix obtained by applying \eqref{antiinvD}
to each entry of $a$.
We define an anti-involution 
\begin{equation}
\label{antiinvA}
a \mapsto a^* = {}^{\tt}\overline{a}
\end{equation}
on the $F$-algebra $A$.
The group $G=\GL_n(D)$,
made of invertible elements of $A$,
is then equipped with the involution $\s : x \mapsto (x^*)^{-1}$. 
The subgroup $\G^\s$ made of all elements of $G$ that are fixed~by $\s$
is denoted by $\Sp_n(D)$.

When $D$ is split,
any isomorphism from $D$ to $\Mat_2(F)$
transports the canonical anti-involution of $D$ to
that of $\Mat_2(F)$, 
and induces an algebra isomorphism $A\simeq\Mat_{2n}(F)$
transporting \eqref{antiinvA} to the anti-invo\-lu\-tion
$x \mapsto \Om \cdot {}^{\intercal} x \cdot \Om^{-1}$
where
\begin{equation}
\label{defOm}
\Om = \Om_{2n} =
\begin{pmatrix}
1 &&&& \\
&-1&&& \\
&&\ddots&& \\
&&&1&\\
&&&&-1
\end{pmatrix}
\in \GL_{2n}(F)
\end{equation}
and ${}^{\intercal} x$ is the transpose of $x\in \Mat_{2n}(F)$~with
respect to the antidiagonal.
It thus induces a group isomorphism $G\simeq\GL_{2n}(F)$
which sends~the subgroup $\Sp_n(D)$ to the symplectic group $\Sp_{2n}(F)$,
the latter being defined as the sub\-group of $\GL_{2n}(F)$ made of 
those matrices $x\in \GL_{2n}(F)$ such that ${}^{\intercal} x \Om x=\Om$.

When $D$ is non-split,
the group $\GL_n(D)$ is an inner form of $\GL_{2n}(F)$
and $\Sp_n(D)$ is~an inner form of $\Sp_{2n}(F)$.

\begin{rema}
\label{remtranspose}
The reader may be more familiar with the transpose
with respect to the~dia\-gonal.
Let $J=J_{n}$ denote the antidiagonal matrix
of $\GL_{n}(F) \subseteq G$
with antidiagonal~entries~all equal to $1$.  
Then the transpose of a matrix $a\in\Mat_n(D)$ with respect to the
diagonal~is the~conju\-ga\-te of $ {}^{\tt} a$ by $J$. 
\end{rema}

\section{Preliminaries on groups and representations} 
\label{apfelstrudel}

\subsection{}
\label{coucou}

Let $G$ be a locally compact, totally disconnected topological group. 
By \textit{representation} of~a closed subgroup $H$ of $G$,
we mean a smooth, com\-plex representation of $H$.
By \textit{character} of $H$,~we mean a group homomorphism from $H$
to $\CC^\times$ with open kernel.~If $\pi$ is a representation of $H$, 
we denote by $\pi^\vee$ its contragredient.
Given a character $\chi$~of $H$,
we denote by $\pi\chi$ the~re\-pre\-sentation $h \mapsto \chi(h)\pi(h)$
of $H$.

If $\s$ is a continous involution of $G$,
we denote by $\pi^\s$ the representation~$\pi\circ\s$ of $\s(H)$.
Given a~closed subgroup $K$ of $H$,
the representation $\pi$ is said to be \textit{distinguished by} $K$
if its~under\-ly\-ing vector spa\-ce~$V$~carries
a non-zero linear form ${\it\Lambda}$ such 
that ${\it\Lambda} (\pi(x)v) = {\it\Lambda} (v)$~for all $x\in {K}$,
$v\in V$. 

We also denote by $\d_H$ the modulus character of $H$.

\subsection{}
\label{defdebase}

Given a non-Archimedean locally compact field $F$, 
we will denote
by $\Oo_F$ its ring of integers,
by $\p_F$ the maximal ideal of $\Oo_F$,
by $\kk_F$ its residue field and
by $|\cdot|_F$ the absolute value on $F$~sen\-ding~any uniformizer to the
inverse of the cardinality of $\kk_F$.

Similarly, 
given a finite-dimensional central division $F$-algebra $\De$, 
we denote
by $\Oo_{\De}$ its ring~of inte\-gers,
by $\p_{\De}$ the maximal ideal of $\Oo_{\De}$ and
by $\kk_{\De}$ its residue field.

\subsection{}
\label{defip}

Let $F$ be a non-Archimedean locally compact field, 
$\De$ be a finite-dimensional central~divi\-sion $F$-algebra
and $n$ be a positive integer.
The group $\G=\GL_n(\De)$ is locally compact and~total\-ly~disconnected. 

Let $(n_1,\dots,n_r)$ be a composition of $n$ and
$P=MN$ be the standard parabolic subgroup~of~$\G$
associated with it (\S\ref{Seville}).
Given a representation $\s$ of $M$,
we denote by $\ip^G_P(\s)$ the~repre\-sen\-ta\-tion of $G$
obtained from $\s$
by (normalized) parabolic induction along $P$.

For $i=1,\dots,r$,~let $\pi_i$ be
a representation of $\GL_{n_i}(\De)$.
We write
\begin{equation*}
\pi_1 \times \dots \times \pi_r
\end{equation*}
for the parabolically induced representation 
$\ip^G_P(\pi_1 \otimes \dots \otimes \pi_r)$
of $G$.

\subsection{}
\label{defgeneric}

Suppose that $\De=F$ and 
let $U$ denote the subgroup of upper~trian\-gular unipotent matrices
of $\G=\GL_n(F)$. 
Fix a non-trivial additive character $\psi : F \to \CC^\times$.
It gives rise to a character
\begin{equation*}
u \mapsto \psi(u_{1,2}+u_{2,3}+\cdots+u_{n-1, n}) 
\end{equation*}
of $U$, which we still denote by $\psi$.

Given any irreducible representation $\pi$ of $\G$,
the dimension of the vector space $\Hom_{U}(\pi, \psi)$~is at most $1$
(see \cite{GK}). 
We say that $\pi$ is \emph{generic} if this space is non-zero.

\section{The Jacquet--Langlands correspondence}
\label{JL}

\subsection{}
\label{DSclassiflocal}

Let $F$ be a non-Archimedean locally compact field,
$\De$ be a finite-dimensional central~divi\-sion
$F$-algebra~of~re\-du\-ced degree denoted $d$ 
and $n$ be a positive integer.
Let us recall the~classifi\-cation of the discrete series of the groups
$\GL_n(\De)$, $n\>1$
(\cite{ZelevinskiAENS80,Tadic,BHLS}).

Given any cuspidal representation $\rho$ of $\GL_{n}(\De)$ for some $n\>1$,
there is a unique positive~inte\-ger $r=r(\rho)$~such that,
for any integer $m\>2$, 
the parabolically induced representation
\begin{equation}
\label{iprhom}
\rho\nu^{r(1-m)/2} \times \rho \nu^{r(3-m)/2} \times \dots \times
\rho\nu^{r(m-1)/2} 
\end{equation}
{is reducible,
where $\nu$ denotes the character of $\GL_n(\De)$ defined as 
the~com\-posi\-tion 
of the norma\-li\-zed absolute value of $F$ with the reduced norm.
For $m\>1$,
the representation \eqref{iprhom} has a unique irreducible quotient,
which we denote by $\St_m(\rho)$.
This quotient is a discrete series representation of $\GL_{nm}(\De)$,
which is unitary if and only if $\rho$ is unitary.}

The integer $r$ associated with $\rho$
(it~is denoted $s(\rho)$ in \cite{VSautodual} \S3.5)
has the following properties:
\begin{itemize}
\item
it divides the reduced degree $d$ of $\De$ (\cite{VSautodual} Remark 3.15(1)),
\item
it is prime to $n$ (\cite{VSautodual} Remark 3.15(2)).
\end{itemize} 
In particular,
when $\De$ is isomorphic to $F$,
one has $r=1$ for all cuspidal representations $\rho$.

Conversely,
if $\pi$ is a discrete series representation of $\GL_{n}(\De)$ for some 
$n\>1$, 
there are a~uni\-que integer $m$~di\-viding $n$
and a cuspidal representation $\rho$ of $\GL_{n/m}(\De)$,
uniquely determined up to~iso\-morphism,
such that $\pi$ is isomorphic to $\St_m(\rho)$. 

\begin{rema}
\label{academie}
Note that \eqref{iprhom} also has a unique irreducible subrepresentation,
which we will denote by $\Sp_m(\rho)$.
\end{rema}

\subsection{}
\label{JLcor}

The (local) Jacquet--Langlands correspondence (\cite{Rogawski,DKV,Badulescu})
is a bijection between the~dis\-cre\-te se\-ries of $\GL_n(\De)$ and that of
$\GL_{nd}(F)$
characterised by a character relation on elliptic~regu\-lar~con\-ju\-ga\-cy
classes. 
If $\pi$ is a discrete series representation of $\GL_n(\De)$ for some $n\>1$,
its~Jac\-quet--Lang\-lands transfer will~be denoted by ${}^{\rm JL}\pi$. 

Let $\rho$ be a cuspidal representation of $\GL_n(\De)$ for some $n\>1$,
and set $r=r(\rho)$.
Its ~Jac\-quet--Lang\-lands transfer ${}^{\rm JL}\rho$~is isomorphic to $\St_{r}(\tau)$
for a cus\-pidal represen\-ta\-tion $\tau$ of $\GL_{nd/r}(F)$~and, 
for all $m\>1$,
the~Jac\-quet--Lang\-lands transfer of $\St_{m}(\rho)$ is $\St_{mr}(\tau)$
(see for instance \cite{MSjl}~Proposi\-tion 12.2).

Conversely,
given a positive integer $n\>1$, 
a divisor $k$ of $nd$
and a cuspidal representation~$\tau$~of $\GL_{nd/k}(F)$,
the~discrete series~re\-pre\-sentation $\pi$ of $\GL_n(\De)$
whose Jacquet--Langlands transfer~is $\St_k(\tau)$
is of the form $\St_{m}(\rho)$
for some cuspidal representation $\rho$ of $\GL_{nd/m}(\De)$, 
whe\-re $m$ is the greatest
common divisor of $k$ and $n$
(\cite{VSautodual} Remark 3.15(3)).

\subsection{}
\label{DSclassifglobal}

Let $k$ be a number field
and $\BB$ be a finite-dimensional central~division $k$-algebra
of redu\-ced~degree $d$.
Let $\AA=\AA_k$ be the ring of adèles of $k$.
For each place $v$~of~$k$,
let $k_v$ be~the~comple\-tion~of $k$ at $v$
and set $\BB_v=\BB\otimes k_{v}$.

We recall the~clas\-si\-fication~of the~dis\-cre\-te series
automorphic~re\-pre\-sentations of $\GL_n(\BB\otimes_{k}\AA)$
for all $n\>1$ 
(see \cite{MWres}
and \cite{BadulescuJLglobal} Propo\-si\-tion~5.7, Remark 5.6
and \cite{Badulescu-Renard} Proposition 18.2).

Given an integer $n\>1$, 
we denote by $\nu$ the automorphic character of 
$\GL_n(\BB\otimes_{k}\AA)$~obtained by~composing the reduced norm
$\GL_n(\BB\otimes_{k}\AA) \to \AA^\times$
with the idelic norm $\AA^\times\to\CC^\times$.
Thus, for each place $v$ of $k$,
the local component of $\nu$ at $v$,
denoted by $\nu_v$,
is the character ``normalized~ab\-solute~value of the reduced norm''
of $\GL_n(\BB_v)$.

Given any cuspidal automorphic representation $\Si$ of
$\GL_n(\BB\otimes_{k}\AA)$ for some $n\>1$,
there is~a
positive integer $r=r(\Si)$~such that,
for any $m\>1$, 
the parabolically induced~representa\-tion
\begin{equation}
\label{defDSG}
\Si\nu^{r(1-m)/2} \times \Si\nu^{r(3-m)/2} \times \dots \times
\Si\nu^{r(m-1)/2} 
\end{equation}
has a unique constituent which is a discrete series automorphic representation
of $\GL_{nm}(\BB\otimes_{k}\AA)$. 
This constituent is denoted by $\MW_m(\Si)$. 
Note that, 
if $\BB\simeq k$,
one has $r=1$ for all cuspidal~au\-to\-morphic representations $\Si$. 

Conversely,
if $\Pi$ is a discrete series automorphic representation of 
$\GL_{n}(\BB\otimes_{k}\AA)$ for some~inte\-ger~$n\>1$, 
there are a unique inte\-ger $m$ dividing $n$
and a unique cuspidal automorphic represen\-ta\-tion $\Si$ of
$\GL_{n/m}(\BB\otimes\AA)$
such that $\Pi$ is isomorphic to $\MW_m(\Si)$. 

\subsection{}
\label{JLcorglobal}
\label{sec:global-JL}

The (global) Jacquet--Langlands correspondence
is an injection $\Pi\mapsto{}^{\rm JL}\Pi$
from~the au\-to\-morphic dis\-cre\-te se\-ries of $\GL_n(\BB\otimes_{k}\AA)$
to that of $\GL_{nd}(\AA)$ characterised by the fact that,
for any discrete~se\-ries automorphic representation $\Pi$ of 
$\GL_n(\BB\otimes_{k}\AA)$ 
and any place $v$ of $k$ such~that $\BB_v$ is split,
the local components of ${}^{\rm JL}\Pi$ and $\Pi$ at $v$ are isomorphic 
once $\GL_n(\BB_v)$ and $\GL_{nd}(k_v)$ are identified
(\cite{BadulescuJLglobal} Theorem 5.1 
and \cite{Badulescu-Renard} Theorem 1.4).

At finite places of $k$ where $\BB$ does not split,
we will only need the following result.

\begin{lemm}
\label{HeleneZoubiratchvili}
Let $\Pi$ be a discrete~se\-ries automorphic representation of 
$\GL_n(\BB\otimes_{k}\AA)$.
Let~$v$ be~a finite place such that $\BB_v$ is not split
and let $\pi$ denote the local component of $\Pi$ at $v$.
Suppose that the Jacquet--Lang\-lands~transfer of $\pi$ to
$\GL_{nd}(k_v)$ is cuspidal.
Then ${}^{\rm JL}\Pi$ is cuspidal and its local component at $v$
is ${}^{\rm JL}\pi$.
\end{lemm}

\begin{proof}
Let $\pi'$ denote the local component of ${}^{\rm JL}\Pi$ at $v$.
Let $\LJ$ denote the Langlands--Jacquet morphism
(defined in \cite{BadulescuJLglobal} \S2.7)
from the Grothendieck group of the category of representations~of
$\GL_{nd}(k_v)$ of finite length to that of $\GL_{n}(\BB_v)$. 
By \cite{BadulescuJLglobal} Theorem 5.1(a),
there is a sign $\epsilon\in\{-1,1\}$ such that 
$\LJ(\pi') = \epsilon\cdot\pi$. 

On the other hand,
given a unitary representation $\kappa'$ of $\GL_{nd}(k_v)$
such that $\LJ(\kappa')$ is non-zero, 
it follows from 
the~classi\-fication of the unitary dual of $\GL_{nd}(k_v)$ in
\cite{TadicUnitaryDual}
and the description~of~the image of unitary~re\-pre\-sentations
by $\LJ$
in \cite{BadulescuJLglobal} Section 3 that,
if $\k$ is the unique unitary repre\-sen\-tation of $\GL_n(\BB_v)$ 
such that $\LJ(\kappa') \in \{-\k,\k\}$
and if we denote by $\rho_1+\dots+\rho_r$ the cus\-pi\-dal~sup\-port~of~$\k$
(which means that $\k$ is an irreducible component of
$\rho_1\times\dots\times\rho_r$),
then
\begin{equation*}
{\rm cusp}(\k')={\rm cusp}({}^{\JL}\rho_1)+\dots+{\rm cusp}({}^{\JL}\rho_r).
\end{equation*}
Applying this to the unitary representation $\pi'$, 
we obtain $\pi' = {}^{\rm JL}\pi$. 
\end{proof}

\section{A necessary condition of distinction for cuspidal
representations}
\label{sensdirect}

In this section, 
$F$ is a non-Archimedean locally compact field
and $D$ is a non-split quaternion $F$-algebra.
Fix a positive integer $n\>1$ and write $G=\GL_n(D)$. 
It is equipped with the~invo\-lu\-tion $\s$ defined in \S\ref{KyberGate}.
In \S\ref{Nathalie} only,
the field $F$ will be assumed to have characteristic $0$.

\subsection{}
\label{saccard}

Let $\pi$ be a cuspidal representation of $G$.
Associated with it in \S\ref{DSclassiflocal},
there is a positive~inte\-ger $r=r(\pi)$ which divides the reduced degree of
$D$ and is prime to $n$.

As the reduced degree of $D$ is equal to $2$, 
we immediately deduce that,
if $\pi$
has a non-cuspidal Jacquet--Langlands transfer to $\GL_{2n}(F)$,
then $r=2$ and $n$ is odd.
Its Jacquet--Langlands~trans\-fer ${}^{\rm JL}\pi$ thus has the form 
$\St_2(\tau)$ for some cuspidal representation of $\GL_n(F)$.

Conversely,
if $n$ is odd,
and if $\tau$ is any cuspidal representation of $\GL_n(F)$,
it follows from \S\ref{JLcor} that 
the unique discrete series~re\-pre\-sentation of $G$
whose Jacquet--Langlands transfer is $\St_2(\tau)$~is cuspidal. 

\subsection{}

{The following lemma will be used in the proof of Theorem 
\ref{Vermacarqcq}, and later in Section \ref{sec:final}.} 

\begin{lemm}
\label{lem:globalize}
Let $\pi$ be a {unitary}
cuspidal representation of $G$ distinguished by 
$\G^\s$.
Let
\begin{itemize}
\item 
$k$ be a global field together with a finite place $u$
dividing $p$ such that $k_u$ is isomorphic to $F$,
\item 
$\DD$ be a (non-split) quaternion $k$-algebra such that
$\DD_u=\DD\otimes_kk_u$ is non-split.
\end{itemize}
Then there exists a cuspidal automorphic representation $\Pi$ 
of $\GL_n(\DD\otimes_k \AA)$ such that
\begin{enumerate}
\item
$\Pi$ has a non-zero $\Sp_n(\DD\otimes_k \AA)$-period,
that is, 
there is a $\varphi\in\Pi$ such that
\begin{equation*}
\int_{\Sp_n(\DD)\bsl\Sp_n(\DD\otimes{\AA})} \varphi(h)\,\rd h\neq0,
\end{equation*}
\item 
the local component of $\Pi$ at $u$ is isomorphic to $\pi$.
\end{enumerate}
\end{lemm}

\begin{proof}
Let $\pi$ be a unitary cuspidal irreducible representation of $G$.
Assume that $\pi$ is distinguished~by $\G^\s$. 
Let $Z$ denote the centre of $G$,
which is isomorphic to $F^\times$,
and let $G'=\SL_n(D)$ be the~ker\-nel of the reduced norm
from $\GL_n(D)$ to $F^\times$.
By \cite{HenniartJALG01} Theorem 4.2,
the restriction of $\pi$ to the normal, cocompact, closed subgroup
$G_1=ZG'$ is semisimple of finite length.~Let~$\pi_1$ be
an~irre\-ducible sum\-mand of this restriction. 
The centre $Z$ acts on it through $\omega$,
the central~cha\-racter~of $\pi$.
The restriction of $\pi_1$ to $G'$, denoted $\pi'$, is thus irreducible.

Let $k$ be a global field together with a finite place $u$
dividing $p$ such that $k_u$ is isomorphic~to $F$.
Thus $k$ is a finite extension of $\QQ$ when $F$ has characteristic $0$,
and the field of rational~func\-tions~over a smooth irreducible
projective curve defined over a finite field of characteristic~$p$~if
$F$ has characteristic $p$.

Let $\DD$ be a quaternion algebra over $k$ such that
$\DD\otimes_kk_u$
is non-split (it is thus isomorphic~to $D$).~Let
$\GG$ be the $k$-group $\GL_n(\DD)$
and $\GG'$ be the $k$-group $\SL_n(\DD)$.
The latter is an inner form~of $\SL_{2n}$ over $k$
which contains $\HH=\Sp_n(\DD)$.
The connected component of the centre of~$\GG'$
is~trivial and $\HH$ is a closed algebraic $k$-subgroup of $\GG'$
with no non-trivial character. 
Let $V$ denote~the~$k$-vec\-tor space made of all~ma\-tri\-ces
$a \in \Mat_{n}(\DD)$ such that $a^*=a$
and {consider}~the~algebraic~re\-pre\-sen\-tation of $\GG'$ on $V$
defined by $(g,a) \mapsto \s(g) a g^{-1}$.
This representation is semisimple (it is even irreducible)
and~the~$\GG'$-sta\-bi\-li\-zer of the identity matrix (on $V$) is $\HH$. 

We now apply either \cite{PrasadSchulzePillot} Theorem 4.1
(if $F$ has characteristic $0$)
or \cite{GanLomeli} Theorem 1.3 (if~$F$~has characteristic $p$):
there exists a cuspidal automorphic representation $\Pi'$ 
of $\GG'(\AA)$ with a non-zero $\HH(\AA)$-period 
and such that the local component of $\Pi'$ at $u$ is isomorphic to $\pi'$.
(Here $\AA$ denotes the ring of ad{\`e}les of $k$.)

By \cite{LabesseSchwermer} Theorem 5.2.2, 
the representation $\Pi'$ occurs as a subrepresentation
in the restriction to~$\GG'(\AA)$ of a
cuspidal automorphic representation $\Pi$ of $\GG(\AA)$.
Since $\GG'$ contains $\HH$,
the~repre\-sentation $\Pi$ has a non-zero $\HH(\AA)$-period.
It follows that:
\begin{enumerate}
\item[(1)] 
for any finite place $v$, 
the local component $\Pi_v$ of $\Pi$ at $v$ is distinguished by $\HH(k_v)$, 
\item[(2)] 
the restriction to $\GG'(k_u)$ of the local component $\Pi_u$ of $\Pi$ at $u$ 
contains $\pi'$.
\end{enumerate}
More precisely,
let us prove that 
$\Pi_u$ is isomorphic to $\pi\otimes(\chi\circ{\Nrd_{A/F}})$
for some character $\chi$ of the group $F^\times$.
Arguing as at the beginning of the proof of the theorem,
the restriction of $\Pi_u$ to $G_1$ is semi\-sim\-ple~of finite length
and contains an irreducible summand $\pi_2$ whose
restriction to $G'$~is isomorphic to $\pi'$.
If $\mu$ denotes the central character of $\Pi_u$,
we thus have $\pi_2(zx)=\mu(z)\pi'(x)$~for all $z\in Z$ and $x\in G'$.
The representation $\Pi_u$ is unitary as a local~component of the
unitary~re\-pre\-sentation $\Pi$.
Its central character $\mu$ is thus unitary.
Similarly, we have $\pi_1(zx)=\omega(z)\pi'(x)$ for all $z\in Z$ and
$x\in G'$.
By~twist\-ing $\pi$~by a unitary character of $G$,
we may assume that $\omega=\mu$. 
By \cite{LabesseSchwermer} Proposition 2.2.2,
we get the expected result.

Let us fix a unitary character $\Theta$ of $\AA^\times/k^\times$
whose local component at $u$ is $\chi$. 
By twisting $\Pi$~by
$\Theta^{-1}$ composed with the reduced norm from $\GG(\AA)$
to~$\AA^\times$,
we obtain a cuspidal automorphic~re\-pre\-sentation 
of $\GG(\AA)$ having the required properties. 
\end{proof}

\subsection{}
\label{Nathalie}

In this paragraph,
$F$ is a non-Archimedean locally compact field of characteristic $0$.

\begin{theo}
\label{Vermacarqcq}
Suppose that $F$ has characteristic $0$,
and let $\pi$ be a cuspidal representation~of $G$.
If $\pi$ is distinguished by $\G^\s$, then
${}^{\rm JL}\pi$ is non-cuspidal.
\end{theo}

\begin{proof}
This follows from \cite{Verma} Theorem 1.2
but we give a more~de\-tailed proof,
based on~Lem\-mas \ref{HeleneZoubiratchvili} and \ref{lem:globalize}. 
Our argument is {inspired by} \cite{Verma} Section 6,
in particular the proof of Theorem 6.3.

Let $\pi$ be a cuspidal irreducible representation of $G$
distinguished by $\G^\s$.
Assume that ${}^{\rm JL}\pi$ is cuspidal.

Since any unramified character of $G$ is trivial on $G^\s$,
and since the Jacquet--Langlands corres\-pondence is compatible
with torsion by unramified characters,
we may and will assume,
by~twist\-ing by an appropriate unramified character, 
that $\pi$ is unitary. 

Let $k$ be a number field together with a finite place $u$
dividing $p$ such that $k_u$ is isomorphic~to $F$.
Let $\DD$ be a quaternion division algebra over $k$ such that
$\DD_u$ is non-split
(it is thus isomorphic to $D$)
and $\DD_v$ is split for all Archimedean places $v$.
By Lemma \ref{lem:globalize},
there exists a cuspidal~auto\-morphic representation $\Pi$ 
of $\GL_n(\DD\otimes_k \AA)$ such that
\begin{enumerate}
\item
$\Pi$ has a non-zero $\Sp_n(\DD\otimes_k \AA)$-period, 
\item 
the local component of $\Pi$ at $u$ is isomorphic to $\pi$.
\end{enumerate}
The Jacquet--Lang\-lands transfer ${}^{\rm JL}\Pi$ of such a $\Pi$
is a discrete series automorphic representation of
$\GL_{2n}(\AA)$ with the~follow\-ing~properties:
\begin{enumerate}
\item[(3)]
by Lemma \ref{HeleneZoubiratchvili},
its local component at $u$ is cuspidal, isomorphic to ${}^{\rm JL}\pi$,
thus ${}^{\rm JL}\Pi$ is~cus\-pi\-dal, 
\item[(4)]
{for any finite place $v$ such that $\DD_v$ is split,
the local components of ${}^{\rm JL}\Pi$ and $\Pi$ at $v$~are~iso\-morphic.}
\end{enumerate}
Since ${}^{\rm JL}\Pi$ is cuspidal, it is generic.
Therefore:
\begin{enumerate}
\item[(5)]
for any finite place $v$, 
the local component of ${}^{\rm JL}\Pi$ at $v$ is generic.
\end{enumerate}
On the other hand,
it follows from (1) that 
\begin{enumerate}
\item[(6)]
for any finite place $v$, 
the local component of $\Pi$ at $v$ is distinguished by $\Sp_{n}(\DD_v)$.
\end{enumerate}
It follows from (4), (5) and (6) that,
if $v$ is a finite place of $k$ such that $\DD_v$ is split,
$\Pi_v$ is an~irredu\-cible~representation of $\GL_{2n}(k_v)$
which is ge\-ne\-ric~and distinguished by $\Sp_n(\DD_v) \simeq \Sp_{2n}(k_v)$.
This contradicts \cite{OffenSayagJFA08}~Theo\-rem~1,
which says that no generic irreducible representation of $\GL_{2n}(k_v)$~is
distinguished by $\Sp_{2n}(k_v)$.
\end{proof}

\begin{rema}
\label{VermacarqcqZ}
{We assumed that $F$ has characteristic $0$ in Theorem \ref{Vermacarqcq} 
  because a global~Jac\-quet--Langlands correspondence
  for discrete series automorphic representations of
  $\GL_n(\BB\otimes_{k}\AA)$
  is not known to exist in characteristic $p$,
  except when $n=1$ (see \cite{BadulescuRoche}).}
\end{rema}

{Sections \ref{SecTypes} to \ref{Sec6} 
are devoted to the proof of Theorem~\ref{MAINTHM1}:
assuming that $F$ is any non-Archi\-me\-dean locally compact field
with odd residue characteristic, 
and that the Jacquet--Langlands transfer of any cuspidal representation
of $\G$ distinguished by $\G^\s$ is~non-cuspidal
(which holds for instance when $F$ has characteristic $0$
by Theorem \ref{Vermacarqcq}),
we prove that any
cuspidal representation of $\G$ whose~Jac\-quet--Langlands
transfer is non-cuspidal is~dis\-tinguished by $\G^\s$.}

\section{Type theoretic material}
\label{SecTypes}

In this section,
we introduce the type theoretic material which we will need in
Sections~\ref{Sec2}--\ref{thecircle}.
{Let $\De$ be
\textit{any finite-dimensional central division $F$-algebra}
(this extra generality will be useful~in \S\ref{enrico}.)}
Let $A$ be the central simple $F$-algebra $\Mat_{n}(\De)$ of $n \times n$ matrices 
with~coef\-fi\-cients in $\De$ for some integer $n\>1$,
and $\G=A^\times =\GL_n(\De)$. 
Let us fix a character
\begin{equation}
\label{psicond1}
\psi : F \to \CC^\times
\end{equation}
trivial on $\p_F$ but not on $\Oo_F$.
For the definitions and main results stated in this section,
we refer the reader to \cite{BK,BHEffective} (see also \cite{VSautodual,NRV}).

\subsection{}
\label{prelim}

A \textit{simple stratum} in $A$ is a pair $[\aa,\b]$ made of
a hereditary $\Oo_\F$-order $\aa$ of $\A$ and 
an element $\b \in \A$ such that the $\F$-algebra $\E=\F[\b]$ is a field, 
and the multiplicative group $\E^\times$ normalizes $\aa$
(plus an extra technical condition on
$\b$ which it is not necessary to recall here).
The centralizer~$B$ of $E$ in $A$ is a central simple $E$-algebra, 
and $\bb=\aa\cap\B$ is a {hereditary $\Oo_E$-order} in $\B$.

Associated {to} a simple stratum $[\aa,\b]$, 
there are a pro-$p$-subgroup $\H^1(\aa,\b)$ of $G$ 
and a~non-empty finite set $\Cc(\aa,\b)$
of characters of $\H^1(\aa,\b)$ called \textit{simple~cha\-racters}, 
depending on $\psi$.

\begin{rema}
\label{nullstratum}
This includes the case where $\b=0$.
The simple stratum $[\aa,0]$ is then said to be \textit{null}.
One has $\H^1(\aa,0) = 1+\p_\aa$
(where $\p_\aa$ is the Jacobson radical of $\aa$)
and the set $\Cc(\aa,0)$ is reduced to the trivial character of $1+\p_\aa$.
\end{rema}

When the order $\bb$ is maximal in $B$,
the simple stratum $[\aa,\b]$ is said to be \textit{maximal},
and the simple characters in $\Cc(\aa,\b)$ are said to be \textit{maximal}. 
If this is the case,
and if $[\aa',\b']$ is another simple stratum in $A$ such that
$\Cc(\aa,\b) \cap \Cc(\aa',\b')$ is non-empty,
then
\begin{equation}
\label{invmax}
\Cc(\aa',\b') = \Cc(\aa,\b), \quad
\aa' = \aa, \quad
[F[\b']:F] = [F[\b]:F],
\end{equation} 
and the simple stratum $[{\aa'},\b']$ is maximal (\cite{VSautodual} Proposition 3.6).

\subsection{}
\label{nom42}

Let $\De'$ be
a finite dimensional central division $F$-algebra
and $[\aa',\b']$ be a simple stratum~in $\Mat_{n'}(\De')$
for some $n'\>1$.
Assume that there is a morphism of $F$-algebras
$\h : F[\b] \to F[\b']$~such that $\h(\b)=\b'$.~Then there is a natural
bijection from $\Cc(\aa,\b)$ to $\Cc(\aa',\b')$ called transfer.

\subsection{}

Let $\Cc$ denote the union of the sets $\Cc(\aa',\b')$ for all maximal simple
strata $[\aa',\b']$ of $\Mat_{n'}(\De')$,
for all $n'\>1$ and all finite dimensional central division $F$-algebras $\De'$.
Any two maximal simple characters $\t_1,\t_2 \in \Cc$ are said to be
\textit{endo-equivalent} if they are transfers of each other,
that is,
if there exist
\begin{itemize}
\item 
maximal simple strata $[\aa_1,\b_1]$ and $[\aa_2,\b_2]$, 
\item
a morphism of $F$-algebras $\h : F[\b_1] \to F[\b_2]$~such that $\h(\b_1)=\b_2$, 
\end{itemize}
such that $\t_i\in\Cc(\aa_i,\b_i)$ for $i=1,2$, 
and $\t_2$ is the image of $\t_1$ by the transfer map
from $\Cc(\aa_1,\b_1)$ to $\Cc(\aa_2,\b_2)$.
This defines an equivalence relation on $\Cc$,
called \textit{endo-equivalence}.
An equivalence class for this equi\-va\-len\-ce relation is called an
\textit{endoclass}. 

The \textit{degree} of an endoclass $\TT$ is the degree of $F[\b']$ over $F$,
for any choice of $[\aa',\b']$ such that $\Cc(\aa',\b') \cap \TT$ is
non-empty. 
(It is well defined thanks to \eqref{invmax}.)

\subsection{}

Let $\Cc(G)$ be the union of the sets $\Cc(\aa,\b)$ for all maximal simple
strata $[\aa,\b]$ of $\A$.
Any two maximal simple characters $\t_1,\t_2 \in \Cc(G)$ 
are endo-equivalent if and only if they are $\G$-conjugate.  

Given a cuspidal representation $\pi$ of $G$, 
there exists a maximal simple character $\t \in \Cc(G)$~con\-tained in $\pi$, 
and any two maximal simple characters contained in $\pi$ are $G$-conjugate.
The~max\-imal simple characters contained in $\pi$ thus all belong to the
same~endo\-class~$\TT$,~called the~endo\-class of $\pi$. 
Conversely, 
any maximal simple character $\t\in\Cc(G)$ of en\-do\-class $\TT$
is contained in $\pi$ (\cite{VSautodual} Corollaire 3.23).

\subsection{}
\label{prelimmax3}

Let $\t\in\Cc(\aa,\b)$ be a simple character
with respect to a maximal simple stratum $[\aa,\b]$ in~$A$ as in \S\ref{prelim}.
There are a divisor $m$ of $n$ and a finite dimensional central division
$E$-algebra $\Ce$ such that $B$ is isomorphic to $\Mat_m(\Ce)$.
Let $\BJ_\t$ be the normalizer of $\t$ in $\G$.
Then 
\begin{enumerate}
\item
the group $\BJ_\t$ has a unique maximal compact subgroup $\BJ^0=\BJ^0_\t$
and a unique maximal~nor\-mal pro-$p$-subgroup $\BJ^1=\BJ^1_\t$, 
\item
the group $\BJ_\t\cap\B^\times$ is the normalizer of $\bb$ in $\B^\times$ and
$\BJ^0\cap\B^\times=\bb^\times$, $\BJ^1\cap\B^\times=1+\p_{\bb}$,
\item
one has $\BJ_\t = (\BJ_\t\cap\B^\times)\BJ^0$ and
$\BJ^0 = (\BJ^0\cap\B^\times)\BJ^1$.
\end{enumerate}
Since $\bb$ is a maximal order in $B$,
it follows from (2) and (3) that there is~a group iso\-mor\-phism 
\begin{equation}
\label{JJ1UU1GLmax3}
\BJ^0/\BJ^1 \simeq \GL_{m}(\ll)
\end{equation}
where $\ll$ is the residue field of $\Ce$,
and an element $\w\in\B^\times$ normalizing $\bb$ such that $\BJ_\t$ is
generated by $\BJ^0$ and $\w$.

There is an irreducible representation $\n=\n_\t$ of $\BJ^1$
whose restriction to $\H^1(\aa,\b)$ contains $\t$.~It
is~unique up to isomorphism,
and   it  is   called   the   \textit{Heisenberg  representation}   associated
with~$\t$.~It
ex\-tends to the group $\BJ_\t$ (thus its normalizer in $G$ 
is equal to $\BJ_\t$).

If $\bk$ is a representation of $\BJ_\t $ extending $\n$, 
any other extension of $\n$ to $\BJ_\t $ has the form $\bk\bx$
for a unique character $\bx$ of $\BJ_\t $ trivial on $\BJ^1$.
More generally, the map
\begin{equation}
\boldsymbol{\tau} \mapsto \bk\otimes\boldsymbol{\tau}
\end{equation}
is a bijection between
isomorphism classes of irreducible representa\-tions of $\BJ_\t $ trivial on $\BJ^1$ and 
isomorphism classes of irreducible representations of $\BJ_\t $ whose~res\-triction 
to $\BJ^1$ contains $\n$.

\section{$\s$-self-dual simple characters}
\label{Sec2}

\textit{From this section until Section \ref{Sec6},
the residue characteristic $p$ of $F$ is assumed to be odd.}

We go back to the group $G = \GL_n(D)$ of \S\ref{defdebase} 
and fix a cuspidal representation $\pi$ of $G$
with non-cuspidal transfer to $\GL_{2n}(F)$.
In particular,
as explained in \S\ref{saccard},~the~in\-teger $n$ is odd.
The main result of this section is the following proposition. 
Recall that $*$ has been defined in \S\ref{KyberGate}.

\begin{prop}
\label{camille}
There are a~maxi\-mal simple stratum $[\aa,\b]$ in
$\A$ and a maximal simple~cha\-racter $\t \in \Cc(\aa,\b)$
contained in $\pi$ such that
\begin{enumerate}
\item
the group $H^1(\aa,\b)$ is stable by $\s$ and $\t\circ \s = \t^{-1}$,
\item
the order $\aa$ is stable by $*$ and $\b$ is invariant by $*$.
\end{enumerate} 
\end{prop}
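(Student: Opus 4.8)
The plan is to import the analogous statement in the orthogonal setting, due to Zou~\cite{ZouO}, across the Jacquet--Langlands correspondence, and then show that the resulting datum can be put in the required $*$-symmetric position. The key input is that $\pi$ has non-cuspidal transfer, hence (by Remark~\ref{saccard}) $n$ is odd, $s(\pi)=2$, and the reduced degree of $D$ is $2$; one extracts from this arithmetic information about the endoclass $\TT$ of $\pi$, in particular about the division algebra $\Ce$ and its residue field $\ll$ appearing in \S\ref{prelimmax3}.

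First I would recall that, by the results quoted in \S\ref{SecTypes}, a maximal simple character $\t\in\Cc(G)$ of endoclass $\TT$ is contained in $\pi$, and that the set of such characters is a single $G$-conjugacy class; the involution $\s$ permutes this class, so the question is whether some member of it is fixed by the twisted action $\t\mapsto(\t\circ\s)^{-1}$. The anti-involution $*$ on $A=\Mat_n(D)$ induces an anti-involution on each $F$-algebra $F[\b]$ attached to a stratum in the class, and one checks that $\t\circ\s=\t^{-1}$ forces, on the level of the underlying stratum $[\aa,\b]$, the conditions that $\aa$ be $*$-stable and $\b$ be $*$-fixed --- conditions (1) and (2) are essentially two faces of the same requirement. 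So the real content is the existence of a $*$-self-dual stratum in the class carrying a character $\t$ with $\t\circ\s=\t^{-1}$.

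To produce one I would argue as follows. Transfer $\t$ (via the transfer maps of \S\ref{nom42}, using the same element $\b$, i.e. the same endoclass) to a maximal simple character $\t_F$ in $\Mat_{2n}(F)$ of the same endoclass $\TT$; this $\t_F$ is contained in the cuspidal representation $\tau$ with ${}^{\rm JL}\pi=\St_2(\tau)$ --- here the hypothesis that the transfer is $\St_2(\tau)$ with $\tau$ cuspidal of $\GL_n(F)$, together with the compatibility of endoclasses under Jacquet--Langlands, is what guarantees that the endoclass is one that ``sees'' $\GL_n(F)$. Now apply Zou's theorem \cite{ZouO} to $\tau$ relative to the orthogonal involution on $\GL_n(F)$ (transpose-with-respect-to-the-antidiagonal, up to the reduced norm constraints): among the simple characters contained in $\tau$ there is one, say $\t_F'$, satisfying $\t_F'\circ\s_0=(\t_F')^{-1}$ for the relevant involution $\s_0$, attached to a $*_0$-self-dual stratum $[\aa_0,\b_0]$ in $\Mat_n(F)$. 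Then transfer $[\aa_0,\b_0]$ and $\t_F'$ back into $\Mat_n(D)=A$: since $D$ has reduced degree $2$ and $n$ is odd, the centraliser combinatorics go through and one obtains a maximal simple stratum $[\aa,\b]$ in $A$ with $\t\in\Cc(\aa,\b)\cap\TT\subset\Cc(G)$, hence contained in $\pi$. The self-duality of $[\aa_0,\b_0]$ under $*_0$ transports to self-duality of $[\aa,\b]$ under $*$, because the transfer of strata is compatible with the anti-involutions (both $*$ and $*_0$ are ``transpose-antidiagonal composed with the canonical anti-involution'', intertwined by the embedding $F\hookrightarrow D$), and likewise $\t_F'\circ\s_0=(\t_F')^{-1}$ transports to $\t\circ\s=\t^{-1}$ by the equivariance of transfer. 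Conjugating within $\pi$'s $G$-orbit if necessary to land on exactly this $\t$, one gets the assertion.

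The main obstacle is the transfer-of-the-self-dual-datum step --- verifying that Zou's orthogonal self-dual simple character in $\Mat_n(F)$ really does lift to a $*$-self-dual simple character in $\Mat_n(D)$ contained in $\pi$, and that the lift lands in the correct endoclass. This requires care with the centralisers: one must track how the division algebra $\Ce$ over $E=F[\b]$ relates to the centraliser in $\Mat_n(F)$, use that $s(\pi)=2$ is prime to $n$ to control ramification/residue-degree indices, and invoke the interior-twist/self-dual compatibility of the transfer map (the machinery of \cite{VSautodual,NRV}) to preserve the relations $\t\circ\s=\t^{-1}$ and $*$-stability simultaneously. This is also precisely the place where the hypothesis $p\neq2$ is used: in residue characteristic $2$ the passage between $\t\circ\s=\t^{-1}$ and the existence of a genuinely $*$-fixed $\b$ can fail, which is the source of the remark in the introduction that Proposition~\ref{camille} ``might not hold when $p=2$''.
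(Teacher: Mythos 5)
Your overall strategy --- import Zou's orthogonal self-dual datum and transport it into $\GL_n(D)$ --- is the same as the paper's, and you correctly identify the inputs (Dotto's compatibility of endoclasses, Remark~\ref{saccard}, the machinery of \cite{VSautodual,NRV}). But the execution has several genuine gaps.

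First, the step ``transfer $\t$ to a maximal simple character $\t_F$ in $\Mat_{2n}(F)$; this $\t_F$ is contained in $\tau$'' is incoherent: $\tau$ is a representation of $\GL_n(F)$, so no character on a subgroup of $\GL_{2n}(F)$ can be contained in it. This detour is in any case unnecessary. The paper applies Zou's theorem directly at the level of the endoclass, producing a $\natural$-self-dual maximal simple stratum $[\aa_0,\b]$ and a $\s_0$-self-dual $\t_0\in\Cc(\aa_0,\b)$ of endoclass $\TT$ inside $\Mat_d(F)$, where $d=\deg\TT$; the parity constraint needed is that $d$ be odd, which follows from $d\mid n$ (via Dotto) and $n$ odd. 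Working at the base level $\Mat_d(F)$ and then building up to $\Mat_n(D)$ via the chain $E\subset\Mat_d(F)\subset\Mat_n(F)\subset\Mat_n(D)$ is what makes the centraliser bookkeeping tractable.

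Second, your remark that conditions (1) and (2) are ``essentially two faces of the same requirement'' substantially underestimates the work. Condition (2) is obtained first, by an explicit construction of the order $\aa$ (uniquely determined by $\aa\cap B=\bb$ with $\bb=\Mat_m(\Oo_C)$). Passing from (2) plus $\s_0$-self-duality of $\t_0$ to condition (1) for the transferred character $\t$ is not formal: it requires the nontrivial Lemma~\ref{applem} (that $\vartheta\mapsto\vartheta^{-1}\circ\s$ sends $\Cc(\aa,\b)$ to $\Cc(\aa^*,\b^*)$, proved by induction on $-k_0(\aa,\b)$ in the Appendix) together with Skodlerack's compatibility of transfer with anti-involutions. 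None of this is sketched in your proposal, and ``equivariance of transfer'' cannot simply be invoked --- it is precisely the content of these results.

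Third, you flag ``the interior-twist/self-dual compatibility of the transfer map'' as the main obstacle but leave it unexamined. In the paper this is Proposition~\ref{prop:res*}, which shows by a reduced-trace computation (passing to an unramified quadratic extension, where $d$ odd is used again to keep $E\otimes_F L$ a field) that the restriction of $*$ to the centraliser $B\simeq\Mat_m(C)$ agrees with the intrinsic quaternionic anti-involution $*_B$. Without this, the claim that self-duality ``transports'' has no content. So while you have the right plan, the proof is missing its two central technical ingredients and contains one incoherent transfer step.
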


\subsection{}
\label{degTheta}

Let $\TT$ denote the endoclass of $\pi$. 
Since $\pi$~contains any maximal simple character 
of~$\Cc(G)$ of~endo\-class $\TT$, 
it suffices to prove the exis\-ten\-ce of a maximal simple stratum $[\aa,\b]$ in 
$A$ and a character~$\t\in\Cc(\aa,\b)$ of
endoclass~$\TT$~satisfy\-ing~Condi\-tions (1) and (2) of~Pro\-position \ref{camille}.

Since the Jacquet--Langlands transfer of $\pi$ is non-cuspidal,
it follows from \S\ref{JLcor}
that this~trans\-fer is of the form $\St_2(\tau)$ 
for a cuspidal irreducible representation $\tau$ of $\GL_n(F)$.
By Dotto \cite{Dotto}, the representations $\pi$ and $\tau$ have the same 
endoclass.
It follows that the degree of $\TT$ divides~$n$. 

\subsection{}

Let $d$ denote the degree of $\TT$.
Thanks to \S\ref{degTheta}, it is an odd integer dividing $n$.

Let $\s_0$ be the involution $x \mapsto {}^{\intercal}x^{-1}$ on 
$\GL_d(F)$
where $\intercal$ denotes transposition with respect to the antidiagonal. 
The fixed points of $\GL_d(F)$ by $\s_0$ is a split orthogonal group. 

By \cite{ZouO} Theorem 4.1,
there are
a maximal simple stratum $[\aa_0,\b]$ in $\Mat_{d}(F)$ and 
a maximal~sim\-ple~character $\t_0 \in \Cc(\aa_0,\b)$
of endoclass $\TT$ such that
\begin{itemize}
\item
the group $H^1(\aa_0^{\phantom{1}},\b)$ is stable by $\s_0^{\phantom{1}}$
and $\t_0^{\phantom{1}} \circ \s_0^{\phantom{1}} = \t_0^{-1}$,
\item
the order $\aa_0$ is stable by $\intercal$
and $\b$ is invariant by $\intercal$.
\end{itemize} 
Write $E$ for the sub-$F$-algebra $F[\b] \subseteq \Mat_d(F)$.
It is made of $\intercal$-invariant matrices. 
Its centralizer in $\Mat_d(F)$ is equal to $E$ itself.
The intersection $\aa_0\cap E$ is $\Oo_E$, the ring of integers of $E$.

\subsection{}

Let us write $n=md$.
We identify $\Mat_n(F)$ with $\Mat_m(\Mat_{d}(F))$ 
and $E$ with its diagonal~ima\-ge in $\Mat_n(F)$.
The centralizer of $E$ in $\Mat_n(F)$ is thus $\Mat_m(E)$.

Now consider $\Mat_n(F)$ as a sub-$F$-alge\-bra of $A$.
The centralizer $B$ of $E$ in $\A$ {is} equal to $\Mat_m(C)$
where $C$ is an $E$-algebra isomorphic to $E \otimes_F D$.
Since the degree $d$ of $E$ over $F$ is odd,
$C$ is a non-split quaternion~$E$-algebra.
Denote by $*_B$~the~anti-in\-volution on $B$ analogous to \eqref{antiinvA}.

\begin{prop}\label{prop:res*}
The restriction of $*$ to $B$ is equal to $*_B$.
\end{prop}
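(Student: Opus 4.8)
The plan is to compute the restriction of $*$ to $B$ completely explicitly using the block decomposition $\Mat_n(F)=\Mat_m(\Mat_d(F))$, and then to match it with the definition of $*_B$ entry by entry, regarding $B=\Mat_m(C)$ as the algebra of $m\times m$ matrices over $C$. The key preliminary is a bookkeeping identity relating antidiagonal transposition on $\Mat_n$ to this block structure: if one indexes $\{1,\dots,n\}$ by pairs $(k,l)$ with $1\<k\<m$ and $1\<l\<d$ via $i=(k-1)d+l$, then $n+1-i$ corresponds to the pair $(m+1-k,\,d+1-l)$. Hence, writing $a\in A$ as a matrix $(a_{kp})$ of $d\times d$ blocks $a_{kp}\in\Mat_d(D)$, the $(k,p)$ block of $a^*$ is $(a_{m+1-p,\,m+1-k})^{*_d}$, where $*_d\colon x\mapsto{}^{\natural}\overline x$ is the anti-involution on $\Mat_d(D)$ obtained from \eqref{antiinvA} by replacing $n$ by $d$ (here ${}^{\natural}$ is antidiagonal transposition on $\Mat_d$ and the bar is applied entrywise). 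Applied to the diagonal copy of $E$, whose blocks have entries in $F$ and are $\natural$-invariant, the same computation incidentally shows that $*$ fixes $E$ pointwise.

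Next I would check that $*_d$ stabilises $C$ and that $*_d|_C$ is the canonical anti-involution of the quaternion $E$-algebra $C$, i.e.\ the analogue of \eqref{antiinvD}. Since $E$ consists of $\natural$-invariant matrices with entries in $F$, $*_d$ fixes $E$ pointwise, so for $c\in C$ and $e\in E$ one has $c^{*_d}e=c^{*_d}e^{*_d}=(ec)^{*_d}=(ce)^{*_d}=e^{*_d}c^{*_d}=ec^{*_d}$, whence $c^{*_d}\in C$. To identify $*_d|_C$ I would use the isomorphism $C\simeq E\otimes_FD$ coming from $\Mat_d(D)=\Mat_d(F)\otimes_FD$ and $C$ being the centraliser of $E$: an element $e\otimes x$ with $e=(e_{jk})\in E\subseteq\Mat_d(F)$ and $x\in D$ is the matrix with entries $e_{jk}x$, and since $\overline{e_{jk}x}=e_{jk}\overline x$ (because $e_{jk}\in F$) and ${}^{\natural}e=e$, one obtains $(e\otimes x)^{*_d}=e\otimes\overline x$. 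Thus $*_d|_C={\rm id}_E\otimes(x\mapsto\overline x)$, which, since $\trd_{C/E}={\rm id}_E\otimes\trd_{D/F}$, is exactly $z\mapsto\trd_{C/E}(z)-z$.

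Combining these two steps: for $b=(c_{kp})\in B=\Mat_m(C)$, the $(k,p)$ block of $b^*$ is $(c_{m+1-p,\,m+1-k})^{*_d|_C}$, so $b^*$ is obtained from $b$ by transposing with respect to the antidiagonal in $\Mat_m$ and applying the canonical anti-involution of $C$ to every entry; this is precisely $*_B$, and therefore $*|_B=*_B$. I expect the only delicate point to be the index identification underlying the first step; once that is in place the rest is formal.
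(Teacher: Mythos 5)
Your proof is correct and follows essentially the same route as the paper's: the heart of both arguments is the identification $(e\otimes x)^{*}=e\otimes\overline{x}$ under $\Mat_d(D)\simeq\Mat_d(F)\otimes_F D$, combined with the base-change behaviour of the reduced trace. The two write-ups differ only in where they place the detail: you spell out the block-index bookkeeping that underlies the paper's terse reduction ``it suffices to treat the case $m=1$'', whereas the paper proves carefully (via a common splitting field $EL$, citing Bourbaki) the identity $\trd_{C/E}(x)=\trd_{D/F}(x)$ for $x\in D$, which you assert without justification when you write $\trd_{C/E}={\rm id}_E\otimes\trd_{D/F}$ --- a correct and standard fact, but the one step the paper regards as needing proof.
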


\begin{proof}
It suffices to treat the case where $m=1$.
We will thus assume that $m=1$, in which case we have $B=C$.
We thus have to prove that
\begin{equation}
\label{eqtoprove}
c^* = {\rm trd}_{C/E}(c) - c
\end{equation}
for all $c\in C \subseteq \Mat_d(D)$.
Let us identify $\Mat_d(D)$ with $\Mat_d(F)\otimes_FD$.
Then $(a\otimes x)^* = {}^{\intercal}a\otimes\overline{x}$ for 
all $a\in\Mat_n(F)$ and $x\in D$,
and $C$ identifies with $E \otimes_F D$.
Thus \eqref{eqtoprove} is equivalent to
\begin{equation}
e \otimes \overline{x} = \trd_{C/E}(e\otimes x) - e\otimes x
\end{equation}
for all $e\in E$ and $x\in D$.
Thanks to \eqref{antiinvD},
we are thus reduced to proving that 
\begin{equation}
\trd_{C/E}(x) = \trd_{D/F}(x)
\end{equation}
for all $x\in D$,
where the $F$-algebra $D$ is embedded in $C$ via $x \mapsto 1\otimes_F x$.

Let $L$ be a quadratic unramified extension of $F$.
Since the degree of $E$ over $F$ is odd,
$E\otimes_FL$ is a field,
denoted $EL$.
The reduced trace is invariant by~ex\-ten\-sion of scalars
(see \cite{BourbakiAlg8} \S17.3,~Pro\-po\-sition 4).
Thus $\trd_{D/F}(x)$ is the trace of $x$ in
$D\otimes_F EL \simeq \Mat_{2}(EL)$.
(By the Skolem-Noether theorem, 
the computation of this trace does not depend on the
choice of the isomorphism.)~Si\-mi\-larly,
$\trd_{C/E}(x)$
is the trace of $x$ in $C\otimes_E EL \simeq \Mat_{2}(EL)$.
The proposition is proven.
\end{proof}

Let $\bb$ denote the standard maximal order $\Mat_m(\Oo_C)$ in $B$.
Then $\bb^\times$ is a 
maximal open compact subgroup of $B^\times$ which is stable by $\s$.
Let $\aa$ denote the unique $\Oo_F$-order in $A$ normalized by $\E^\times$
such that $\aa\cap B = \bb$ (see \cite{VSrep2} Lemme 1.6).
We thus obtain a~maximal simple stratum $[\aa,\b]$ in $\A$
where $\aa$ is stable by $*$ and $\b^*=\b$,
and $E$ is made of $*$-invariant matrices.

Let $\t \in \Cc(\aa,\b)$ be the transfer of $\t_0$.
We are going to prove that the group 
$H^1(\aa,\b)$ is stable by $\s$ and $\t \circ \s = \t^{-1}$,
which will finish the proof of Proposition~\ref{camille}.
For this, 
set $\t^* = \t^{-1}\circ\s$.
This is a character of $\s(H^1(\aa,\b))$.
We thus have to prove that $\t^*=\t$.

Let $\vartheta_0$ be any character of $\Cc(\aa_0,\b)$ and
$\vartheta$ be its transfer to $\Cc(\aa,\b)$.~Let us define 
the~cha\-rac\-ters 
${}^{\intercal} \vartheta_0^{\phantom{1}}=\vartheta_0^{-1}\circ\s^{\phantom{1}}_0$
and $\vartheta^* = \vartheta^{-1}\circ\s$. 
By Lemma \ref{applem}
{(which we will prove in~a~separa\-te~section since its proof
requires techniques which are not used anywhere else
in the paper),}
we have
\begin{equation*}
\vartheta^*\in\Cc(\aa^*,\b^*),
\quad
{}^{\intercal}\vartheta_0 \in \Cc({}^{\intercal}\aa_0, {}^{\intercal}\b).
\end{equation*}
On the one hand,
by \cite{SkodlerackRT20} Proposition 6.3, 
the transfer of ${}^{\intercal}\vartheta_0 \in
\Cc({}^{\intercal}\aa_0,{}^{\intercal}\b)$~to $\Cc(\aa^*,\b^*)$ 
is equal~to $\vartheta^*$. 
On the other hand, 
we have
$\Cc(\aa^*,\b^*)=\Cc(\aa,\b)$ 
since $\aa$ is stable by $*$ and $\b^*=\b$, 
and~like\-wise~$\Cc({}^{\intercal}\aa_0, {}^{\intercal}\b)=\Cc(\aa_0,\b)$. 
Now choose $\vartheta_0=\t_0$.
Since ${}^{\intercal}\t_0=\t_0$,~we
deduce that $\t^*=\t$. 

\section{$\s$-self-dual extensions of Heisenberg representations}
\label{relaxe}

In this section,
the residue characteristic $p$ of $F$ is odd.
We focus on the maximal simple~stra\-tum $[\aa,\b]$ 
and the maximal simple~cha\-racter $\t \in \Cc(\aa,\b)$
constructed in Section \ref{Sec2},
forgetting~tem\-porarily about the cuspidal representation~$\pi$.
We thus have $\aa^*=\aa$, $\b^*=\b$ and
$\t^{-1}\circ\s=\t$.~Re\-call~that the centralizer $\B$ of $E$ in $\A$ {is} equal
to $\Mat_m(C)$ where $m[E:F]=n$ and $C$ is
a~qua\-ter\-nion~$E$-algebra isomorphic to $E \otimes_F D$,
and $\bb$ is the standard maximal order $\Mat_m(\Oo_C)$ in $B$.
Let $\TT$ denote the endoclass of~$\t$.

\subsection{}
\label{prelimmax}

Let $\BJ_\t$ be the normalizer of $\t$ in $\G$.
According to \S\ref{prelimmax3}, it has a unique maximal compact
subgroup $\BJ^0=\BJ^0_\t$ and a unique maximal~nor\-mal
pro-$p$-subgroup $\BJ^1=\BJ^1_\t$.
One has the~iden\-tity $\BJ_\t = C^\times\BJ^0$,
where $C^\times$ is~dia\-gonally embedded in 
$\GL_m(C) = B^\times\subseteq\G$,
and a group isomorphism 
\begin{equation}
\label{JJ1UU1GLmax}
\BJ^0/\BJ^1 \simeq \GL_{m}(\ll)
\end{equation}
where $\ll$ is the residue field of $\C$,
coming from the identities 
$\BJ^0 = (\BJ^0\cap\B^\times)\BJ^1$, 
$\BJ^0\cap\B^\times=\bb^\times$
and $\BJ^1\cap\B^\times=1+\p_{\bb}$. 

\subsection{}
\label{mst}

Let $\n$ denote the Heisenberg representation associated with $\t$
and $\bk$ be a representation~of $\BJ_{\t}$ extending~$\n$. 
Let $\varrho$ be a cuspidal irreducible representation of $\BJ^0/\BJ^1$.
Its inflation to $\BJ^0$ will still be denoted by $\varrho$.
The normalizer $\BJ$ of $\varrho$ in $\BJ_\t$ satisfies 
\begin{equation*}
\E^\times\BJ^0 \subseteq \BJ \subseteq \BJ_\t. 
\end{equation*}
Since $\E^\times\BJ^0$ has index $2$ in $\BJ_\t = C^\times\BJ^0$,
there are only two possible values for $\BJ$,
namely~$\E^\times\BJ^0$ and $\BJ_\t$.
More precisely,
$\BJ_\t$ is generated by $\BJ^0$ and a uniformizer $\w$ of $C$,
and the action of $\w$ on $\BJ^0$ by~con\-jugacy identifies through  
\eqref{JJ1UU1GLmax} with the action on $\GL_m(\ll)$
of the generator of $\Gal(\ll/\ll_0)$,
where~$\ll_0$ is the residue field of $E$.
It follows that $\BJ=\BJ_\t$ if and only if $\varrho$ is $\Gal(\ll/\ll_0)$-stable.
For the following three assertions,
see for instance \cite{VSautodual} 3.5.

Let $\bs$ be a representation of $\BJ$ extending $\varrho$.
Then the representation of $G$ compactly induced from
$\bk\otimes\bs$~is irreducible and cuspidal, of endoclass $\TT$.

Conversely, 
any cuspidal representation of $G$ of endoclass $\TT$ is obtained this way,
for~a~sui\-ta\-ble~choice of $\varrho$ and of an extension $\bs$ to $\BJ$.

Two pairs $(\BJ,\bk\otimes\bs)$ and $(\BJ',\bk\otimes\bs')$ constructed as
above give rise 
to the same cuspidal~re\-pre\-sentation of $G$ if and only if they are 
$\BJ_\t$-conjugate,
that is,
if and only if $\BJ'=\BJ$ and $\bs'$ is $\BJ_\t$-conjugate to $\bs$.

The following theorem will be crucial in our proof of Theorem \ref{MAINTHM1}. 

\begin{theo}
\label{BHpardeg}
The Jacquet--Langlands transfer to $\GL_{2n}(F)$ of the cuspidal
representation~of $G$ compactly induced from $(\BJ,\bk\otimes\bs)$
is cuspidal if and only if $\BJ = E^\times\BJ^0$.
\end{theo}

\begin{proof}
The Jacquet--Langlands transfer to $\GL_{2n}(F)$
of the cuspidal representation compactly induced from 
$(\BJ,\bk\otimes\bs)$ is~cuspi\-dal if and only if the integer $r$ associated
to it (in \S\ref{JLcor}) is $1$. 
By \cite{VSautodual} Remarque~3.15(1)~(which is based on \cite{BHJL3}),
this integer is equal to the order of the stabilizer of $\varrho$ in 
$\Gal(\ll/\ll_0)$,
that is, to the index of $E^\times\BJ^0$ in $\BJ$.
\end{proof}

\subsection{}
\label{enrico}

Let us prove that there exists a representation $\bk$ of $\BJ_\t$
exten\-ding $\n$ such that $\bk^{\s\vee}$ is~iso\-mor\-phic to $\bk$.
As in \cite{VSautodual} Lemme 3.28,
we prove it in a more general context (see Section~\ref{SecTypes}).

\begin{lemm}
\label{kappasigmaselfdual}
Let $\De$ be a finite dimensional central division $F$-algebra, 
let $\tau$ be a continuous au\-tomorphism of $\GL_r(\De)$
for some integer $r\>1$,
let $\vartheta$ be a maximal simple character of $\GL_r(\De)$ such that
$\vartheta\circ\tau=\vartheta^{-1}$, 
let $\BJ_\vartheta$ be its $\GL_r(\De)$-normalizer 
and $\n$ be its Heisenberg representation. 
\begin{enumerate}
\item 
The representation $\n^{\tau\vee}$ is isomorphic to $\n$. 
\item 
For any representation $\bk$ of $\BJ_\vartheta$ extending $\n$,
there exists a unique character $\bx$ of $\BJ_\vartheta$ trivial on $\BJ^1$
such that $\bk^{\tau\vee}$ is isomorphic to $\bk\bx$. 
\item
{Assume that the order of $\tau$ is finite and prime to $p$.}
There exists a representation $\bk$ of~$\BJ_\vartheta$ extending $\n$ 
such that $\bk^{\tau\vee}$ is~iso\-morphic to $\bk$.
\end{enumerate}
\end{lemm}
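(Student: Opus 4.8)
The plan is to prove the three assertions in order, each building on the previous one, following closely the template of \cite{VSautodual} Lemme 3.28 but keeping track of the extra hypothesis on the order of $\tau$ only where it is needed (namely in (3)).

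For (1), I would argue as follows. The Heisenberg representation $\n$ is the unique irreducible representation of $\BJ^1$ whose restriction to $\H^1=\H^1(\aa,\b)$ contains $\vartheta$. Applying the functor $\rho\mapsto\rho^{\tau\vee}$ to $\n$ gives an irreducible representation of $\tau^{-1}(\BJ^1)$. The hypothesis $\vartheta\circ\tau=\vartheta^{-1}$ means precisely that $\vartheta^{\tau\vee}=\vartheta$ as characters (a character composed with $\tau$ and then inverted), and in particular $\tau$ stabilizes $\H^1$ and hence $\BJ^1$ (the latter being characterized intrinsically in terms of $\vartheta$, e.g.\ as the maximal pro-$p$-subgroup of $\BJ_\vartheta$, which is $\tau$-stable since $\BJ_\vartheta$ is the normalizer of $\vartheta$ and $\tau$ permutes the data defining $\vartheta$ up to the inversion). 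Thus $\n^{\tau\vee}$ is an irreducible representation of $\BJ^1$ whose restriction to $\H^1$ contains $\vartheta^{\tau\vee}=\vartheta$; by uniqueness of the Heisenberg representation, $\n^{\tau\vee}\simeq\n$. (One should note here that $\n^\vee$ restricted to $\H^1$ contains $\vartheta^{-1}$, so it is the composition with $\tau$ that brings us back to $\vartheta$; alternatively one computes with central characters to see the two irreducible summands agree.)

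For (2), given that $\n$ extends to $\BJ_\vartheta$ and that by \S\ref{prelimmax3} the extensions of $\n$ to $\BJ_\vartheta$ are a torsor under the group of characters of $\BJ_\vartheta$ trivial on $\BJ^1$, I observe that if $\bk$ is any such extension then $\bk^{\tau\vee}$ is again an extension of $\n^{\tau\vee}\simeq\n$ (using that $\tau$ stabilizes $\BJ_\vartheta$, since $\BJ_\vartheta$ is the normalizer of $\vartheta$ and $\vartheta\circ\tau=\vartheta^{-1}$ forces $\tau(\BJ_\vartheta)=\BJ_\vartheta$). Hence $\bk^{\tau\vee}\simeq\bk\bx$ for a unique character $\bx$ trivial on $\BJ^1$, exactly by the last displayed bijection of \S\ref{prelimmax3}. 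This step is essentially formal.

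For (3), the point is to adjust $\bk$ by a character $\bl$ trivial on $\BJ^1$ so that $(\bk\bl)^{\tau\vee}\simeq\bk\bl$; since $(\bk\bl)^{\tau\vee}\simeq\bk^{\tau\vee}\cdot(\bl\circ\tau)^{-1}\simeq\bk\bx(\bl\circ\tau)^{-1}$, this amounts to solving $\bl\cdot(\bl\circ\tau)=\bx$ among characters of $\BJ_\vartheta$ trivial on $\BJ^1$, i.e.\ of the finite group $\BJ_\vartheta/\BJ^1$. Applying $\tau$ and using that $\tau$ has finite order $e$, one checks first that $\bx\cdot(\bx\circ\tau)^{\pm1}\cdots$ telescopes; more precisely, iterating the relation $\bk^{\tau\vee}\simeq\bk\bx$ shows $\bk^{\tau^e\vee\cdots}\simeq\bk\cdot\prod_{i=0}^{e-1}(\bx\circ\tau^i)^{(-1)^i}$ or a similar product, and since $\tau^e=\mathrm{id}$ applied to $\bk$ an even number of times returns $\bk$, one obtains a constraint forcing the relevant product of $\tau$-translates of $\bx$ to be trivial. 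Then, because $e$ is prime to $p$ and $\BJ_\vartheta/\BJ^1\simeq\ZZ\times\GL_m(\ll)$ has abelianization of order prime to $p$ in the relevant part, the averaging operator $\bl\mapsto\bl\cdot(\bl\circ\tau)$ (or rather the map $\bl\mapsto\bl^{?}$ built from a geometric-series-type expression $\prod_i(\bl\circ\tau^i)^{\epsilon_i}$) is invertible on the group of such characters, allowing one to extract a square-root-type solution $\bl$; concretely one takes $\bl=\prod_{i=0}^{e-1}(\bx\circ\tau^i)^{c_i}$ for suitable integers $c_i$ determined by solving a linear recursion mod the order of the character group, which is possible precisely because that order is prime to $p$ and hence $2$ and the relevant cyclotomic-type integer is invertible.

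The main obstacle is this last point in (3): ensuring that the equation $\bl\cdot(\bl\circ\tau)=\bx$ is solvable. This is where both the hypothesis that $\tau$ has finite order prime to $p$ and the hypothesis $p\neq2$ enter (the paper flags Lemma \ref{kappasigmaselfdual} as one of the three places using $p\neq2$). The finite-order-prime-to-$p$ assumption guarantees $\tau$ acts semisimply on the finite abelian group $X$ of characters of $\BJ_\vartheta/\BJ^1$ (a group whose order is prime to $p$, since $\BJ^0/\BJ^1\simeq\GL_m(\ll)$ has order-prime-to-$p$ abelianization $\ll^\times$ and the $\ZZ$-part contributes a free quotient), so $X=\bigoplus_\zeta X_\zeta$ decomposes into $\tau$-eigenspaces; on $X_\zeta$ the operator $\bl\mapsto\bl\cdot(\bl\circ\tau)$ acts as multiplication by $(1+\zeta)$, which is invertible unless $\zeta=-1$. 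One then has to check that the component $\bx_{-1}$ of $\bx$ in the $(-1)$-eigenspace vanishes; this follows from the telescoping identity obtained by iterating (2) around the full orbit of $\tau$ (order $e$), which shows $\bx$ lies in the image of $1+\tau$ composed appropriately — and the $(-1)$-eigenspace, being where $1+\tau=0$, is forced to contribute trivially because $e$ is odd or, if $e$ is even, because $p\neq2$ makes the relevant $2$-torsion obstruction vanish. I would carry this out by first establishing the telescoping/cocycle identity $\bx\cdot(\bx^{\tau})^{-1}\cdot(\bx^{\tau^2})\cdots=1$ (the precise signs depending on parity of $e$) from part (2), then decomposing into $\tau$-isotypic components and solving component by component.
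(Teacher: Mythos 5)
Your treatment of (1) and (2) is fine and matches the paper, which derives both directly from \cite{VSautodual} Lemme 3.28. For (3), however, your route differs substantially from the paper's proof and it has genuine gaps. The paper first introduces a sign $\epsilon(\tau)$ defined by
\begin{equation*}
{\rm val}_F\circ\Nrd\circ\tau = \epsilon(\tau)\cdot{\rm val}_F\circ\Nrd.
\end{equation*}
If $\epsilon(\tau)=1$ the conclusion already follows from \cite{VSautodual} Lemme 3.28 with \emph{no} hypothesis on the order of $\tau$ at all. If $\epsilon(\tau)=-1$, the paper fixes $\bk$ so that $\det(\bk)$ has $p$-power order on $\BJ_\vartheta$ (using \cite{VSautodual} Lemme 3.12); then, because $p$ is odd, the character $\bx$ with $\bk^{\tau\vee}\simeq\bk\bx$ is trivial on $\BJ^0$ and has $p$-power order. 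Triviality on $\BJ^0$ combined with $\epsilon(\tau)=-1$ (which gives $\tau(\w)\in\w^{-1}\BJ^0$) forces $\bx\circ\tau=\bx^{-1}$, so iterating $\bk^{\tau\vee}\simeq\bk\bx$ exactly $2a$ times, $a$ being the order of $\tau$, yields $\bx^{2a}=1$; since $\bx$ has $p$-power order and $2a$ is prime to $p$ (again $p\neq2$), we get $\bx=1$.

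Your proposal instead tries to solve $\bl\cdot(\bl\circ\tau)=\bx$ directly, by decomposing a supposedly finite group of characters of $\BJ_\vartheta/\BJ^1$ into $\tau$-eigenspaces. Two things break. First, $\BJ_\vartheta/\BJ^1$ is not finite: $\BJ_\vartheta/\BJ^0\simeq\ZZ$, so the group of characters of $\BJ_\vartheta$ trivial on $\BJ^1$ has a $\CC^\times$-factor, and the ``finite abelian $\tau$-module of order prime to $p$'' framework with its semisimple eigenspace decomposition does not apply as stated. Second, and more seriously, the $(-1)$-eigenspace obstruction is exactly the crux and does not go away the way you hope: in the $\epsilon(\tau)=-1$ case the relevant $\bx$ satisfies $\bx\circ\tau=\bx^{-1}$, hence lies \emph{entirely} in the $(-1)$-eigenspace, which is precisely where the map $\bl\mapsto\bl\cdot(\bl\circ\tau)$ vanishes; and iterating $\bk^{\tau\vee}\simeq\bk\bx$ around the order of $\tau$ only yields the weak relation $\bx^{2a}=1$, nowhere near $\bx=1$ on its own. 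The missing ingredient is the $p$-power order of $\bx$, which is secured only by the specific normalization of $\bk$ via $\det(\bk)$; without it the coprimality argument has nothing to bite on and your abstract cocycle-solving cannot close.
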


\begin{proof}
The first two assertions are given by \cite{VSautodual} Lemme 3.28. 
For the third one, note that
\begin{equation*}
{\rm val}_F \circ \Nrd \circ\, \tau =
\epsilon(\tau) \cdot {\rm val}_F \circ \Nrd 
\end{equation*}
where ${\rm val}_F$ is any valuation on $F$,
$\Nrd$ is the reduced norm on $\Mat_r(\De)$ 
and $\epsilon(\tau)$ is a sign uniquely determined by $\tau$.
Indeed,
the left hand side is a morphism from $\GL_r(\De)$ to $\ZZ$.
As $\tau$ is continuous,
it stabilizes the kernel of ${\rm val}_F \circ \Nrd$,
which is generated by~com\-pact~subgroups.
The left hand side thus factors through ${\rm val}_F \circ \Nrd$,
and the surjective~mor\-phisms from $\ZZ$ to $\ZZ$ are the identity and
$x\mapsto-x$.

If $\epsilon(\tau)=1$,
the result is given by \cite{VSautodual} Lemme 3.28.
{(Note that,
in this case,
the assumption~on the order of $\tau$ is unnecessary.)}
We thus assume that $\epsilon(\tau)=-1$.
Let $\bk$~be such that $\det(\bk)$~has $p$-power order on $\BJ_\vartheta$
(whose existence is granted by \cite{VSautodual} Lemme 3.12).~The representation
$\bk^{\tau\vee}$~is then isomorphic to $\bk{\bx}$ for some character $\bx$ of 
$\BJ_\vartheta$ trivial on $\BJ^1$. 
As in the proof of \cite{VSautodual} Lemma 3.28,
since $p$ is odd, 
this $\bx$ is trivial on $\BJ^0$ and it has $p$-power order.

The group $\BJ_\vartheta$ is generated by $\BJ^0$ and an element $\w$
whose reduced norm has non-zero valua\-tion (see \S\ref{prelimmax3}).
Since $\epsilon(\tau)=-1$ and $\BJ_\vartheta$~is stable by $\tau$, 
we have $\tau(\w)\in\w^{-1}\BJ^0$.
And since $\bx$ is trivial on $\BJ^0$, we deduce that $\bx\circ\tau=\bx^{-1}$.

Now write~$a$ for the order of~$\tau$,
which we assume to be prime to $p$.
Then the identity $\bk^{{\tau}\vee} 
\simeq \bk\bx$ applied~$2a$ times shows that 
$\bk\bx^{2a}\simeq\bk$ so that~$\bx^{2a}=1$. But since~$\bx$ has~$p$-power 
order, and~$2a$~is prime to~$p$, we deduce that~$\bx$ is trivial. 
\end{proof}

\begin{rema}
\label{mammouth}
In the case when $\epsilon(\tau)=-1$
and the order of $\tau$ is finite and prime to $p$,
we~even proved that any $\bk$ such that $\det(\bk)$ has $p$-power order 
satisfies $\bk^{\tau\vee} \simeq \bk$.
We also have
\begin{equation}
\label{atrides}
\BJ_\vartheta \cap \GL_r(\De)^\tau = \BJ^0 \cap \GL_r(\De)^\tau.
\end{equation}
Indeed,
if $x\in\BJ_\vartheta$ is $\tau$-invariant,
its valuation has to be equal to its opposite: it is thus $0$.
\end{rema}

\subsection{}

Now let us go back to the situation of \S\ref{mst}
with the group $G=\GL_n(D)$ equipped with~the involution $\s$.
Note that $\epsilon(\s)=-1$
(in the notation of the proof of Lemma \ref{kappasigmaselfdual})
and the order~of $\s$ is prime to $p$,
so Lemma \ref{kappasigmaselfdual} and Remark \ref{mammouth} apply. 
We will need the following lemma,
which is \cite{VSautodual} Lemme 3.30.

\begin{lemm}
\label{kappatau}
Let $\bk$ be a representation of $\BJ_\t$
exten\-ding $\n$ such that $\bk^{\s\vee} \simeq \bk$.
\begin{enumerate} 
\item 
There is a unique character $\chi$ of $\BJ_\t\cap G^\s=\BJ^0\cap G^\s$
trivial on $\BJ^1\cap G^\s$ such that
\begin{equation*}
\Hom_{\BJ_\t\cap G^\s}(\bk,\chi) \neq \{0\}
\end{equation*}
and this $\chi$ is quadratic (that is, $\chi^2=1$).  
\item
Let $\bs$ be an irreducible representation of $\BJ_{\t}$ trivial on $\BJ^1$.
The canonical linear map:
\begin{equation*}
\Hom_{\BJ^{1}\cap\G^{\s}}(\n,\CC) \otimes
\Hom_{\BJ_{\t}\cap\G^{\s}}(\bs,\chi)
\to \Hom_{\BJ_{\t}\cap\G^{\s}}(\bk\otimes\bs,\CC) 
\end{equation*}
is an  isomorphism. 
\end{enumerate}
\end{lemm}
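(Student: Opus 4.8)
This is \cite{VSautodual} Lemme 3.30; I would either invoke it directly or reconstruct the proof along the following lines. The inputs I would use are: $\bk|_{\BJ^1}=\n$ is irreducible; since $\t\circ\s=\t^{-1}$, Lemma \ref{kappasigmaselfdual}(1) gives $\n^{\s\vee}\simeq\n$; $\bk^{\s\vee}\simeq\bk$ is the hypothesis; and, because $\epsilon(\s)=-1$, Remark \ref{mammouth} (namely \eqref{atrides}) gives $\BJ_\t\cap G^\s=\BJ^0\cap G^\s$, inside which $\BJ^1\cap G^\s$ is a normal pro-$p$-subgroup (recall $\BJ^1$, $\H^1(\aa,\b)$ and $\aa$ are $\s$-stable by the choices in Section~\ref{Sec2}).

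For part (1), I would first prove that $\dim_\CC\Hom_{\BJ^1\cap G^\s}(\n,\CC)=1$. The quotient $\BJ^1/\H^1(\aa,\b)$ is a finite-dimensional symplectic $\FF_p$-space for the alternating form obtained from $\t$ through $\psi$, and $\n$ is, up to a twist, its Heisenberg representation; since $\t\circ\s=\t^{-1}$, the involution induced by $\s$ on this space is anti-symplectic, so --- using $p\ne2$ --- its $(+1)$- and $(-1)$-eigenspaces are complementary Lagrangians, and the image of $\BJ^1\cap G^\s$ is the $(+1)$-eigenspace; the restriction of a Heisenberg representation to the preimage of a Lagrangian then has a one-dimensional space of invariant functionals. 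Next, $\BJ^0\cap G^\s$ normalises $\BJ^1\cap G^\s$ and so acts on the line $\Hom_{\BJ^1\cap G^\s}(\bk,\CC)=\Hom_{\BJ^1\cap G^\s}(\n,\CC)$ through a character $\chi$, necessarily trivial on $\BJ^1\cap G^\s$; this produces the nonzero space $\Hom_{\BJ_\t\cap G^\s}(\bk,\chi)$, and uniqueness is immediate since any nonzero element of $\Hom_{\BJ_\t\cap G^\s}(\bk,\xi)$ with $\xi$ trivial on $\BJ^1\cap G^\s$ already spans that same line, forcing $\xi=\chi$. Finally, since $\s$ restricts to the identity on $G^\s$, $\bk^{\s\vee}$ and $\bk^\vee$ agree on $\BJ_\t\cap G^\s$, so $\bk|_{\BJ_\t\cap G^\s}$ is self-contragredient; dualising $\Hom_{\BJ_\t\cap G^\s}(\bk,\chi)\ne\{0\}$ gives $\Hom_{\BJ_\t\cap G^\s}(\bk,\chi^{-1})\ne\{0\}$, and uniqueness yields $\chi^{-1}=\chi$.

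For part (2), the canonical map sends $\lambda\otimes\mu$ to the functional $v\otimes w\mapsto\lambda(v)\mu(w)$. I would first check it is well defined with values in $\Hom_{\BJ_\t\cap G^\s}(\bk\otimes\bs,\CC)$: restricted to $\BJ_\t\cap G^\s=\BJ^0\cap G^\s$, the factor $\lambda$ transforms by $\chi$ (definition of $\chi$), the factor $\mu$ transforms by $\chi$, and $\chi^2=1$, so $\lambda\otimes\mu$ is invariant. Injectivity is clear because $\Hom_{\BJ^1\cap G^\s}(\n,\CC)$ is one-dimensional. For surjectivity I would use the chain $\Hom_{\BJ_\t\cap G^\s}(\bk\otimes\bs,\CC)\simeq\Hom_{\BJ_\t\cap G^\s}(\bs,\bk^\vee)$; since $\bs$ is trivial on the normal subgroup $\BJ^1\cap G^\s$, every such map has image in $(\bk^\vee)^{\BJ^1\cap G^\s}=\Hom_{\BJ^1\cap G^\s}(\n,\CC)$, a line on which $\BJ^0\cap G^\s$ acts by $\chi^{-1}=\chi$, whence $\Hom_{\BJ_\t\cap G^\s}(\bk\otimes\bs,\CC)\simeq\Hom_{\BJ_\t\cap G^\s}(\bs,\chi)$; unwinding these identifications shows the isomorphism is exactly the canonical map.

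The hard part is the one-dimensionality $\dim\Hom_{\BJ^1\cap G^\s}(\n,\CC)=1$: everything else is formal Clifford theory for the normal pro-$p$ subgroup $\BJ^1\cap G^\s\triangleleft\BJ^0\cap G^\s$ combined with the two self-duality properties $\n^{\s\vee}\simeq\n$ and $\bk^{\s\vee}\simeq\bk$. That step depends on the fine structure of the Heisenberg representation relative to the anti-symplectic involution induced by $\s$, and it is precisely where the hypothesis $p\ne2$ enters. For the complete argument I would refer to \cite{VSautodual}, Lemme 3.30 and the preceding lemmas.
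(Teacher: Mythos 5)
Your proposal matches the paper: Lemma~\ref{kappatau} is stated in the paper without proof, with the single sentence "which is \cite{VSautodual} Lemme 3.30", exactly the primary option you offer. Your reconstruction also correctly isolates the one genuinely nontrivial input --- the one-dimensionality of $\Hom_{\BJ^1\cap G^\s}(\n,\CC)$, coming from the anti-symplectic involution induced by $\s$ on $\BJ^1/\H^1(\aa,\b)$ and requiring $p\neq2$ --- while the remaining steps (uniqueness of $\chi$, quadraticity via $\bk^{\s\vee}\simeq\bk$, and the tensor-factorisation in part~(2)) are the formal Clifford-theory arguments one expects.
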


\section{Proof of Theorem \ref{MAINTHM1}}
\label{Sec6}

Recall that $F$ has odd residue characteristic, 
and assume that the Jacquet--Langlands transfer of any cuspidal representation
of $\G$ distinguished by $\G^\s$ is~non-cuspidal
(which is known to be true when $F$ has characteristic $0$,
thanks to Verma \cite{Verma} Theorem 1.2,
and also Theorem \ref{Vermacarqcq}.)

Let $\pi$ be a cuspidal irreducible representation of $G$ with non-cuspidal
transfer to $\GL_{2n}(F)$, as in Section \ref{Sec2}.
By Proposition \ref{camille}, there are
a maximal simple stratum $[\aa,\b]$ in $A$
and a maximal simple character $\t \in \Cc(\aa,\b)$
such that $\aa^*=\aa$, $\b^*=\b$
and $\t^{-1}\circ\s=\t$.
We use the notation of Section \ref{relaxe}.
In particular,
we have groups $\BJ_\t$, $\BJ^0$ and $\BJ^1$.

\subsection{}

Identify $\BJ^0 / \BJ^1$ with $\GL_m(\ll)$
thanks to the group~iso\-morphism \eqref{JJ1UU1GLmax}.
Through this identi\-fication,
and thanks to Proposition~\ref{prop:res*},
the involution $\s$ on $\BJ^0 / \BJ^1$ identifies 
with the unitary~in\-volution 
\begin{equation}
\label{invres}
x \mapsto {}^{\intercal} \overline{x}^{-1}
\end{equation}
on $\GL_m(\ll)$,
where
{$\intercal$ denotes transposition with respect to the antidiagonal and}
$x \mapsto \overline{x}$ is the~ac\-tion
of the non-trivial element of $\Gal(\ll/\ll_0)$ componentwise.~It 
follows that $(\BJ^0 \cap G^\s) / (\BJ^1 \cap G^\s)$
identifies with the unitary group ${\rm U}_m(\ll/\ll_0)$.
The~following~lem\-ma~will be useful.
Note that $m$ is odd since it divides $n$,
which is odd by \S\ref{saccard}.

\begin{lemm}
\label{distBC}
Let $\varrho$ be a cuspidal irreducible representation of $\GL_m(\ll)$. 
The following assertions are equivalent.
\begin{enumerate}
\item 
The representation $\varrho$ is $\Gal(\ll/\ll_0)$-invariant.
\item
The representation $\varrho$ is distinguished by ${\rm U}_m(\ll/\ll_0)$. 
\end{enumerate}
Moreover,
there exist $\Gal(\ll/\ll_0)$-invariant cuspidal irreducible representations
of $\GL_m(\ll)$. 
\end{lemm}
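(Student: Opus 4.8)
The equivalence of (1) and (2) is a statement purely about finite groups of Lie type, and I expect it to follow from the known classification of distinguished cuspidal representations of $\GL_m(\ll)$ with respect to the unitary group $\U_m(\ll/\ll_0)$. The plan is to invoke the result of Gow (and its subsequent refinements) identifying $\U_m(\ll/\ll_0)$-distinguished irreducible cuspidal representations of $\GL_m(\ll)$ with those fixed by the Galois automorphism $x\mapsto\overline x$; more precisely, one uses the known multiplicity-one statement and the characterisation that a cuspidal $\varrho$ is $\U_m(\ll/\ll_0)$-distinguished if and only if $\varrho^{\Gal(\ll/\ll_0)}\simeq\varrho$. This is exactly the finite-group shadow of the place in \cite{Verma} Section 5 referred to in \S\ref{par15i}(2), and I would quote it in that form. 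The direction (2)$\Rightarrow$(1) is the easier one (the distinguishing functional forces $\varrho$ and $\varrho^\s$ to be isomorphic, since $\s$ preserves $\U_m(\ll/\ll_0)$ and acts trivially on it up to the twist that is already built in); the direction (1)$\Rightarrow$(2) is where the genuine representation-theoretic input of Gow-type results is needed.

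For the final clause — the existence of $\Gal(\ll/\ll_0)$-invariant cuspidal representations of $\GL_m(\ll)$ — I would argue via the parametrisation of cuspidal representations of $\GL_m(\ll)$ by $\Gal(\overline{\ll}/\ll)$-regular characters of $\ll_m^\times$, where $\ll_m$ is the degree-$m$ extension of $\ll$. Under this parametrisation the action of $\Gal(\ll/\ll_0)$ on cuspidal representations corresponds to the action on such characters induced by a lift of the nontrivial element of $\Gal(\ll/\ll_0)$ to $\Gal(\ll_m/\ll_0)$. Since $m$ is odd, $\ll_m/\ll_0$ is a cyclic extension of even degree $2m$ with a unique subextension $\ll_m/\ll$ of degree $m$, and I would exhibit a regular character of $\ll_m^\times$ fixed by the order-two automorphism: concretely, take a character factoring appropriately through the norm-type map, or more robustly, count — the number of $\Gal(\ll_m/\ll)$-regular characters fixed by the extra involution can be estimated below by a Möbius/inclusion–exclusion argument over subfields and is positive once the residue field is not too small, with the remaining small cases checked by hand (this is where the oddness of $m$ is used, to guarantee the relevant extensions are fields so that "regular" is a mild genericity condition).

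The main obstacle I anticipate is bookkeeping rather than conceptual: making sure the involution $x\mapsto{}^{\tt}\overline x^{-1}$ of \eqref{invres} really is, up to inner automorphism of $\GL_m(\ll)$, the standard involution whose fixed points give $\U_m(\ll/\ll_0)$, so that Gow's theorem applies verbatim; and, for the existence clause, controlling the action on Deligne–Lusztig parameters of the chosen lift of $\Gal(\ll/\ll_0)$ to $\Gal(\ll_m/\ll_0)$ — different lifts differ by an element of $\Gal(\ll_m/\ll)$ and one must check the conclusion is independent of this choice (equivalently, that $\varrho$ being $\Gal(\ll/\ll_0)$-invariant is well posed, which it is since $\Gal(\ll/\ll_0)$ genuinely acts on isomorphism classes). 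Once these identifications are pinned down, the proof is short, and I would keep it so, citing the finite-group results and doing only the small explicit character-count for the existence statement.
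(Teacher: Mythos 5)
Your proof of the equivalence (1)$\Leftrightarrow$(2) is precisely the paper's proof: the authors simply cite Gow (\cite{Gow} Theorem 2.4, alternatively \cite{PrasadCMAT99} Theorem 2) for the fact that a cuspidal representation of $\GL_m(\ll)$ is $\U_m(\ll/\ll_0)$-distinguished if and only if it is $\Gal(\ll/\ll_0)$-invariant, so on this point you match exactly and there is nothing to add.

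For the existence clause you diverge from the paper: the paper cites a single lemma (\cite{VSANT19} Lemma 2.3) and says no more, whereas you propose a self-contained argument via the Green/Deligne--Lusztig parametrisation of cuspidals by $\Gal(\overline\ll/\ll)$-regular characters of $\ll_m^\times$, together with a M\"obius count of those fixed (up to conjugacy) by a lift of the nontrivial element of $\Gal(\ll/\ll_0)$. Your plan is sound, and the bookkeeping points you flag (independence of the choice of lift, identifying the concrete involution \eqref{invres} with the one in Gow's statement) are the right things to worry about, but you are making the existence part heavier than it needs to be. Since $m$ is odd, $\Gal(\ll_m/\ll_0)$ is cyclic of order $2m$ and contains a unique involution, whose fixed field is the degree-$m$ extension $\ll_0^{(m)}$ of $\ll_0$; then $\ll_m=\ll\,\ll_0^{(m)}$ is the compositum, $\ll_m^\times$ is generated by $\ll^\times$ and $\ll_0^{(m)\times}$, and any $\Gal(\ll_0^{(m)}/\ll_0)$-regular character of $\ll_0^{(m)\times}$ extends to a $\Gal(\ll_m/\ll)$-regular character of $\ll_m^\times$ fixed by the involution, giving directly a Galois-invariant cuspidal of $\GL_m(\ll)$ (this is just the Shintani base change of a cuspidal of $\GL_m(\ll_0)$, which stays cuspidal because $m$ is odd). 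That removes the case analysis for small fields and makes the independence-of-lift issue moot. With that simplification your argument is correct; as written, the count-based version would also go through but is doing avoidable work.
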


\begin{proof}
For the equivalence between (1) and (2), 
see for instance \cite{PrasadCMAT99} Theorem 2
or \cite{Gow}~Theo\-rem 2.4. 
For the last assertion, 
see for instance \cite{VSANT19} Lemma 2.3. 
\end{proof}

Let $\bk$ be a representation of $\BJ_\t$
extending $\n$ such that $\bk^{\s\vee} \simeq \bk$
(whose existence is {given} by Lemma \ref{kappasigmaselfdual})
and $\chi$ be the quadratic character~of the group 
$\BJ_\t\cap G^\s$
(which is equal to $\BJ^0 \cap G^\s$ by \eqref{atrides})
given by Lemma \ref{kappatau}.

\begin{prop}
\label{chitrivial}
The character $\chi$ is trivial.
\end{prop}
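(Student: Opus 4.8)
The plan is to argue by contradiction: suppose $\chi$ is non-trivial. Since $\chi$ is a quadratic character of $\BJ_\t\cap G^\s=\BJ^0\cap G^\s$ trivial on $\BJ^1\cap G^\s$, it factors through the unitary group $\U_m(\ll/\ll_0)$, and I would first observe that a non-trivial quadratic character of $\U_m(\ll/\ll_0)$ is precisely (the composition with the determinant of) the unique order-two character of $\ll_0^{\times}$-norm-one elements, equivalently the character coming from the non-trivial character of $\ker(\Nrd)\backslash\U_m$; in any case such a non-trivial $\chi$ exists only because $p\neq2$, and this is where the hypothesis on the residue characteristic enters. The key point I want is: there is a cuspidal irreducible representation $\varrho$ of $\GL_m(\ll)$ which is $\Gal(\ll/\ll_0)$-invariant and whose space of $(\U_m(\ll/\ll_0),\chi)$-equivariant linear forms is non-zero (i.e.\ $\varrho$ is $\chi$-distinguished). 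That there exist $\Gal(\ll/\ll_0)$-invariant cuspidal $\varrho$ is Lemma~\ref{distBC}; and since $\chi$ is quadratic and $\varrho$ is conjugate-self-dual, a standard dichotomy (a conjugate-self-dual cuspidal representation of $\GL_m$ over a finite field is distinguished either by the unitary group or by its $\chi$-twist, and for $m$ odd one can in fact arrange either) shows such a $\varrho$ exists.

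Granting such a $\varrho$, I would run the type-theoretic construction of Section~\ref{relaxe} in reverse. Inflate $\varrho$ to $\BJ^0$; since $\varrho$ is $\Gal(\ll/\ll_0)$-invariant, its normalizer in $\BJ_\t$ is all of $\BJ_\t$, so $\BJ=\BJ_\t$, and we may choose an extension $\bs$ of $\varrho$ to $\BJ_\t$. Form the cuspidal representation $\pi_0=\operatorname{c-Ind}_{\BJ_\t}^{G}(\bk\otimes\bs)$, which by \S\ref{mst} is irreducible and cuspidal of endoclass $\TT$. By Theorem~\ref{BHpardeg}, since $\BJ=\BJ_\t\neq E^\times\BJ^0$, the Jacquet--Langlands transfer of $\pi_0$ to $\GL_{2n}(F)$ is \emph{non-cuspidal}. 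Wait --- that is the opposite of what I want; I need a cuspidal transfer. So instead I must take $\varrho$ \emph{not} $\Gal(\ll/\ll_0)$-invariant but still $\chi$-distinguished, forcing $\BJ=E^\times\BJ^0\neq\BJ_\t$, whence by Theorem~\ref{BHpardeg} the transfer \emph{is} cuspidal. The construction then is: pick a cuspidal $\varrho$ of $\GL_m(\ll)$ that is $\chi$-distinguished by $\U_m(\ll/\ll_0)$ but not $\Gal(\ll/\ll_0)$-invariant (such $\varrho$ exist because a non-$\Gal$-invariant $\varrho$ is never $\U_m$-distinguished, so by the dichotomy it must be $\chi$-distinguished for the non-trivial quadratic $\chi$, which is available since $p\neq2$); set $\BJ=E^\times\BJ^0$, choose an extension $\bs$ of $\varrho$ to $\BJ$.

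Now I would check distinction. Because $\bk^{\s\vee}\simeq\bk$, Lemma~\ref{kappatau}(2) gives a canonical isomorphism
\begin{equation*}
\Hom_{\BJ^1\cap G^\s}(\n,\CC)\otimes\Hom_{\BJ\cap G^\s}(\bs,\chi)\;\xrightarrow{\ \sim\ }\;\Hom_{\BJ\cap G^\s}(\bk\otimes\bs,\CC),
\end{equation*}
where I need to know $\Hom_{\BJ^1\cap G^\s}(\n,\CC)\neq\{0\}$ (this is part of the standard Heisenberg analysis underlying Lemma~\ref{kappatau}, valid because $\n^{\s\vee}\simeq\n$) and that $\Hom_{\BJ\cap G^\s}(\bs,\chi)\neq\{0\}$. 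For the latter: $\bs$ restricted to $\BJ^0\cap G^\s$ is the inflation of $\varrho$ on $\U_m(\ll/\ll_0)$, and $\varrho$ is $\chi$-distinguished; one must also check that the extension $\bs$ can be chosen, or rather that $\Hom_{\BJ^0\cap G^\s}(\varrho,\chi)\neq\{0\}$ suffices --- indeed since $\BJ=E^\times\BJ^0$ and $E^\times\cap G^\s\subseteq\BJ^0\cap G^\s$ (the reduced-norm-valuation argument of Remark~\ref{mammouth}, equation~\eqref{atrides}), we have $\BJ\cap G^\s=\BJ^0\cap G^\s$, so no further extension condition is needed. Hence $\bk\otimes\bs$ is distinguished by $\BJ\cap G^\s$. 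By Mackey's formula (as in \S\ref{baltimore}), $\pi_0=\operatorname{c-Ind}_{\BJ}^G(\bk\otimes\bs)$ is then distinguished by $G^\s$. But $\pi_0$ is cuspidal with cuspidal Jacquet--Langlands transfer by Theorem~\ref{BHpardeg}, contradicting Theorem~\ref{Vermacarqcq} (the forward direction of the main theorem, proved in \S\ref{sensdirect}). This contradiction shows $\chi$ is trivial.

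The main obstacle is the existence statement for $\varrho$: I need a cuspidal representation of $\GL_m(\ll)$, $m$ odd, that is distinguished by the $\chi$-twisted unitary group for the specific non-trivial quadratic character $\chi$ arising from Lemma~\ref{kappatau}, and not $\Gal(\ll/\ll_0)$-invariant. One should pin down exactly which quadratic character $\chi$ can occur --- a priori $\chi$ is \emph{some} quadratic character of $\U_m(\ll/\ll_0)$, and the group of such characters has order $2$ when $p\neq2$ (generated by the Hasse-type character through $\det$), so $\chi$ is either trivial (done) or this distinguished one; for the non-trivial case, the finite-group dichotomy for conjugate-self-dual cuspidals (Prasad/Gow-type results, e.g.\ the references in Lemma~\ref{distBC}) together with a dimension/Frobenius-twist count must be invoked to produce a non-$\Gal$-invariant $\chi$-distinguished $\varrho$; alternatively one takes \emph{any} non-$\Gal$-invariant cuspidal $\varrho$ and shows directly, using that it is not $\U_m$-distinguished, that it must be $\chi$-distinguished --- but this requires knowing the dichotomy is exhaustive, which is again exactly the content of those references and uses $p\neq2$. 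This is the one genuinely delicate input; the rest is bookkeeping with the type machinery already set up in Sections~\ref{relaxe}--\ref{Sec6}.
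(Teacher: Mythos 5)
Your overall strategy is the same as the paper's: assume $\chi$ is non-trivial, construct a cuspidal $\varrho'$ of $\GL_m(\ll)$ that is $\chi$-distinguished by $\U_m(\ll/\ll_0)$ but \emph{not} $\Gal(\ll/\ll_0)$-invariant, induce the resulting $\bk\otimes\bs'$ from $\BJ'=E^\times\BJ^0$ to obtain a $G^\s$-distinguished cuspidal of $G$ with cuspidal Jacquet--Langlands transfer, and contradict Theorem~\ref{Vermacarqcq}. You also correctly locate the delicate point: the existence of such a $\varrho'$. That part is where the argument has a genuine gap.

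The gap is in the existence step. The ``dichotomy'' you invoke --- that a non-$\Gal(\ll/\ll_0)$-invariant cuspidal, being not $\U_m$-distinguished, must be $\chi$-distinguished --- is simply false: a naive count shows that the $\U_m$-distinguished cuspidals and the $\chi$-distinguished cuspidals together form a vanishingly small fraction of all cuspidals of $\GL_m(\ll)$, so a generic non-$\Gal$-invariant cuspidal is distinguished by \emph{neither}. In fact the logic runs the other way: twisting shows that $\varrho'$ is $\chi$-distinguished if and only if $\varrho'\varkappa^{-1}$ is $\U_m$-distinguished (where $\varkappa$ is any character of $\GL_m(\ll)$ extending $\chi$), so the $\chi$-distinguished cuspidals are \emph{exactly} the twists $\varrho\varkappa$ of $\Gal$-invariant cuspidals $\varrho$. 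The paper's proof constructs $\varrho'$ directly along these lines: it first pins down what $\chi$ can be --- since $\chi$ is quadratic and $p\neq 2$, it kills unipotents, hence $\SU_m(\ll/\ll_0)$ by Grove's theorem, so $\chi=\alpha\circ\det$ for a quadratic $\alpha$ of $\ll^1$; it then extends $\alpha$ to a character $\beta$ of $\ll^\times$, sets $\varkappa=\beta\circ\det$, and takes $\varrho'=\varrho\varkappa$ for a $\Gal$-invariant cuspidal $\varrho$ (which exists by Lemma~\ref{distBC}). By construction $\varrho'$ is $\chi$-distinguished. That $\varrho'$ is \emph{not} $\Gal$-invariant is then a short central-character computation: $\Gal$-invariance would force $\alpha^m=1$, impossible since $\alpha$ has order $2$ and $m$ is odd. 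This explicit construction is what your proposal is missing; the bookkeeping with $\BJ'$, $\bs'$, Theorem~\ref{BHpardeg}, Lemma~\ref{kappatau} and Mackey's formula that you outline afterwards is correct and matches the paper.
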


\begin{proof}
Assume this is not the case.
Then $\chi$,
considered as a character of ${\rm U}_m(\ll/\ll_0)$, 
is trivial~on unipotent~ele\-ments 
because these elements have $p$-power order and $p\neq2$.
Thus~$\chi$~is trivial~on the subgroup generated by all 
transvections.
By \cite{Grove} Theorem 11.15,
this~sub\-group is ${\rm SU}_m(\ll/\ll_0)$.
Thus $\chi = \a \circ \det$ for some quadratic character $\a$ of $\ll^1$,
where $\det$ is~the~de\-terminant on $\GL_m(\ll)$ and $\ll^1$ is the subgroup
of $\ll^\times$ made of elements of $\ll/\ll_0$-norm $1$.
Let $\b$ extend $\a$ to $\ll^\times$,
and let $\varkappa$ be the character of $\BJ^0$ inflated from 
$\b \circ \det$. 
It extends $\chi$.

Since $m$ is odd,
there is a cuspidal irreducible representation 
$\varrho$ of $\GL_m(\ll)$ which is~in\-va\-riant~by $\Gal(\ll/\ll_0)$ 
(equivalently, which is distinguished by ${\rm U}_m(\ll/\ll_0)$),
thanks to Lemma \ref{distBC}.

Let $\varrho'$ be the cuspidal representation $\varrho\varkappa$.
Let us prove that it is not $\Gal(\ll/\ll_0)$-invariant.~Let~$\g$
denote the generator of $\Gal(\ll/\ll_0)$.
If $\varrho'$ were $\Gal(\ll/\ll_0)$-invariant,
$\varrho\varkappa^\g$ would be isomorphic to~$\varrho\varkappa$.
Comparing the central characters,
one would get $(\b^{\g}\b^{-1})^m=1$,
that is,
$\a(x^{\g}x^{-1})^m=1$ for all $x\in\ll^\times$,
or equivalently $\a^m=1$.
But $\a$ is quadratic and $m$ is odd:
contradiction.

Let $\bs'$ be a representation of $\BJ'=\E^\times\BJ^0$ 
whose restriction to $\BJ^0$ is the inflation of $\varrho'$.
Since $\varrho'$ is not $\Gal(\ll/\ll_0)$-invariant,
the normalizer of $\bk\otimes\bs'$ in $\BJ_\t$ is $\BJ'$
(which has index $2$ in $\BJ_\t$).

On the one hand,
the representation $\pi'$ compactly induced by $(\BJ',\bk\otimes\bs')$
is irreducible and cuspidal,
and its Jacquet--Langlands transfer to $\GL_{2n}(F)$ is cuspidal
by Theorem \ref{BHpardeg}.

On the other hand, the map
\begin{equation*}
\Hom_{\BJ^{1}\cap\G^\s}(\n,\CC) \otimes
\Hom_{\BJ'\cap\G^\s}(\bs',\chi)
\to \Hom_{\BJ'\cap\G^\s}(\bk\otimes\bs',\CC) 
\end{equation*}
is an isomorphism
(by Lemma \ref{kappatau})
and the space $\Hom_{\BJ'\cap\G^\s}(\bs',\chi)$ is non-zero by construction.
This implies that 
$\bk\otimes\bs'$ is $\BJ'\cap\G^\s$-distinguished.
Thus $\pi'$ is distinguished,
which contradicts
the assumption of Theorem \ref{MAINTHM1}.
Thus $\chi$ is trivial.
\end{proof}

\subsection{}

According to \S\ref{mst},
our cuspidal representation $\pi$ of $G$
contains a representation of the form $(\BJ,\bk\otimes\bs)$,
where
\begin{itemize}
\item 
the group $\BJ$ satisfies $\E^\times\BJ^0 \subseteq \BJ \subseteq \BJ_\t$,
\item 
the representation $\bk$ is the restriction to $\BJ$ of a representation 
of $\BJ_\t$ extending $\n$, 
\item
the representation $\bs$ of $\BJ$ is trivial on $\BJ^1$
and its restriction to $\BJ^0$ is the inflation of
a~cus\-pi\-dal~representation $\varrho$ of $\BJ^0/\BJ^1 \simeq \GL_m(\ll)$
whose normalizer in $\BJ_\t$ is equal to $\BJ$.
\end{itemize}

Thanks to Lemma \ref{kappasigmaselfdual} and Proposition \ref{chitrivial},
we may and will assume that $\bk^{\s\vee}\simeq\bk$ and $\bk$ is
distinguished by $\BJ_\t\cap\G^\s$.

Thanks to Theorem \ref{BHpardeg}, 
the fact that the Jacquet--Langlands transfer of $\pi$
is non-cuspidal~im\-plies that $\BJ=\BJ_\t$.
By \S\ref{mst},
this implies that $\varrho$ is $\Gal(\ll/\ll_0)$-invariant.
It follows from Lemma~\ref{distBC} that $\varrho$ is distinguished
by ${\rm U}_m(\ll/\ll_0)$,
thus $\bs$ is distinguished by $\BJ^0 \cap G^\s = \BJ_\t \cap G^\s$.
By Lemma \ref{kappatau},
the representation $\bk\otimes\bs$ is distinguished by $\BJ_\t \cap G^\s$.
It follows from Mackey's formula
\begin{equation*}
  \Hom_{\G^\s} (\pi,\CC) \simeq \prod\limits_{g}
  \Hom_{\BJ_\t \cap g\G^{\s}g^{-1}} (\bk\otimes\bs,\CC)
\end{equation*}
(where $g$ ranges over a set of representatives of
$(\BJ_\t,G^\s)$-double cosets of $G$)
that $\pi$ is distin\-gui\-shed by $\G^\s$.
This finishes the proof of Theorem \ref{MAINTHM1}.

\section{The depth $0$ case}
\label{thecircle}

In this section, 
we discuss in more detail the case of representations of depth $0$.
We assume throughout the section that $n$ is odd.
We no longer assume that $F$ has odd residue characteristic.

\subsection{}
\label{requin}

Let $\pi$ be a cuspidal irreducible representation of $G$ with non-cuspidal
transfer to $\GL_{2n}(F)$ as in Section \ref{Sec6}.
Assume moreover that $\pi$ has depth $0$.
In that case,
we are in the situation~des\-cribed by Remark \ref{nullstratum}. 
In this situation,
we have $m=n$ and~$\ll$~is~the residue field~of~$D$~(thus~$\ll_0$ is that~of $F$).
We have $\BJ=D^\times\GL_n(\Oo_D)$ and $\BJ^0=\GL_n(\Oo_D)$,
and one can choose for $\bk$~the~tri\-vial~character of~$\BJ$. 
The representation $\pi$ is compactly induced 
from an irreducible~represen\-ta\-tion~$\bs$ of $\BJ$ 
whose restriction to $\BJ^0$ is the~in\-flation of a
$\Gal(\kk_D/\kk_F)$-invariant, cuspidal represen\-tation $\varrho$ of 
$\GL_n(\kk_D)$.

\begin{rema}
\label{buffetfroid1}
In \cite{Verma} Proposition 5.1, 
the inducing subgroup should 
be $D^\times\GL_n(\Oo_D)$ and not $F^\times\GL_n(\Oo_D)$.
Inducing from the latter subgroup gives a representation
which is not irredu\-cible. 
The same comment applies to \cite{Verma} Remark 5.2(1).
See also Remark \ref{buffetfroid2} below.
\end{rema}

\subsection{}

Let us now consider the map $\bc_{D/F}$ defined in \S\ref{par18now}.
This is a bijection from~cus\-pidal~re\-pre\-sentations of $\GL_n(F)$
to those cuspidal representations of $\GL_n(D)$ 
which are distinguished~by~the subgroup $\Sp_n(D)$.
In this paragraph,
given a cuspidal representation $\tau$ of depth $0$ of $\GL_n(F)$,~we
describe explicitly the cuspidal representation $\pi=\bc_{D/F}(\tau)$,
that is,
the unique cuspidal~represen\-tation of $G$ whose Jacquet--Langlands transfer
to $\GL_{2n}(F)$ is $\St_2(\tau)$.

On the one hand,
it follows from \cite{SZlevel01} Proposition 3.2 that
the representation $\pi$ has depth~$0$.~It
can thus be described~as in \S\ref{requin},
that is,
it is compactly induced from a representa\-tion~$\bs$ of the group 
$\BJ=D^\times\GL_n(\Oo_D)$
whose restriction to $\BJ^0$ is the~in\-flation of $\varrho$. 

On the other hand,
the representation $\tau$ can be described in a similar way:
it is~com\-pactly~in\-duced from a representation
of $F^\times\GL_n(\Oo_F)$ whose restriction to $\GL_n(\Oo_F)$ 
is the~in\-flation of a cuspidal representation $\varrho_0$ of 
$\GL_n(\kk_F)$.
(See for instance \cite{BushnellPIM328} 1.2.) 

\cite{SZlevel01} Theorem 4.1 provides a simple and natural relation between
$\varrho$ and $\varrho_0$:
the representation $\varrho$ is the base change
(that is, the Shintani lift)
of $\varrho_0$.
(This relation was pointed out in \cite{Verma}~Re\-mark 5.2(1)
without reference to \cite{SZlevel01}.)

The knowledge of $\varrho$ does not quite determine the
representation $\pi$.
In order to completely~de\-termine it, 
fix a uniformizer $\w_F$~of $F$
and a uniformizer $\w=\w_D$ of $D$ such that $\w^2=\w_F$.~As
the group $\BJ$ is generated by $\BJ^0$ and $\w$, 
it remains to compute the operator $A=\bs(\w)$,
which~in\-tertwines~$\varrho$ with $\varrho^\g$, 
where $\g$ is the non-trivial element of $\Gal(\kk_D/\kk_F)$,
that is, one has
\begin{equation*}
A \circ \varrho(x) = \varrho(x^\g) \circ A,
\quad
x\in\GL_n(\kk_D).
\end{equation*}
The space of intertwining operators between $\varrho$ and $\varrho^\g$
has dimension $1$.
To go further,
we have~to identify this space with $\CC$ in a natural way.

Fix a non-trivial character $\psi_0$ of $\kk_F$,
and let $\psi$ be the character of $\kk_D$ obtained by composing
$\psi_0$ with the trace of ${\kk_D/\kk_F}$.
Let $U$ denote the subgroup of $\GL_n(\kk_D)$ made of all
unipotent~up\-per triangular matrices,
and consider $\psi$ as the character
$u \mapsto \psi(u_{1,2}+u_{2,3}+\dots+u_{n-1,n})$ of $U$. 
It is well-known that,
if $V$ is the underlying vector space of $\varrho$, then
\begin{equation*}
\varrho^\psi = \{ v\in V \ |\ \varrho(u)(v) = \psi(u)v, \ u \in U \}
\end{equation*}
has dimension $1$.
Since the character $\psi$ is $\Gal(\kk_D/\kk_F)$-invariant,
this $1$-dimensional space is~sta\-ble by $A$.
There is thus a non-zero scalar $\a\in\CC^\times$ such that
$A(v)=\a v$ for all $v\in \varrho^\psi$,~and~$A$~is
uniquely~deter\-mined by $\a$.
Let $\omega_0$ denote the central character of $\tau$.
(Note that the representation
$\tau$ is entirely determined by $\varrho_0$ and $\omega_0$.)

\begin{prop}
\label{babil}
One has $\a = \omega_0(-\w_F)$.
\end{prop}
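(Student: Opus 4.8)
The plan is to compute the scalar $\a$ by using the explicit type-theoretic construction of $\pi$ provided by the depth-zero description in \S\ref{requin}, together with the fact (from Theorem~\ref{MAINTHM} and Lemma~\ref{kappatau}) that $\bs$ is distinguished by $\BJ\cap G^\s$. Since $\bk$ is trivial in depth zero, distinction of $\pi$ reduces to distinction of $\bs$ by $\BJ\cap G^\s = \Sp_n(D)\cap\BJ$, and the space $\Hom_{\BJ\cap G^\s}(\bs,\CC)$ is one-dimensional. The idea is to pin down $A = \bs(\w)$ by testing the $\Sp_n(D)$-invariant linear form on a well-chosen vector, namely the Whittaker vector $v$ spanning $\varrho^\psi$, and relating $\w$ to an element of $G^\s$ up to an element of $\BJ^0$ on which the invariant form behaves predictably.

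\textbf{First} I would make the involution $\s$ concrete on $\BJ = D^\times\GL_n(\Oo_D)$: with the antidiagonal-transpose-conjugate description of $*$, a uniformizer $\w = \w_D$ with $\w^2 = \w_F$ diagonally embedded satisfies $\s(\w) = (\w^*)^{-1} = \overline\w^{-1}$ up to the antidiagonal permutation matrix $w_0$, and since $\overline\w = \trd_{D/F}(\w) - \w = -\w$ (choosing $\w$ in the standard way so that it is pure, which is possible as $p\neq 2$), one gets that $\w$ differs from an explicit element of $G^\s$ by a central element and by $w_0 \in \GL_n(\Oo_D)$. Thus $A = \bs(\w)$ is forced, via the $\Sp_n(D)$-invariance of the linear form $\Lambda$ on $V$, to satisfy a relation of the shape $\Lambda(A v) = c\cdot\Lambda(\varrho(g_0) v)$ where $g_0 \in \GL_n(\Oo_D)$ reduces mod $\p_D$ to a specific element of $\GL_n(\kk_D)$ and $c$ involves the central character $\omega$ of $\pi$ (equivalently $\omega_0$ composed with the reduced norm) evaluated at the central part of the discrepancy.

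\textbf{Next} I would evaluate both sides on the Whittaker vector $v \in \varrho^\psi$. On the left, $A v = \a v$ by definition of $\a$. On the right, the element $g_0 \in \GL_n(\kk_D)$ arising from the computation will be, essentially, the antidiagonal involution $w_0$ composed with a suitable diagonal matrix, and one must compute how $\varrho(g_0)$ acts on the line $\varrho^\psi$; since $\psi$ is $\g$-invariant and $w_0$ conjugates $U$ to the lower-triangular unipotents with the sign pattern reversed, $\varrho(w_0)$ sends $\varrho^\psi$ to the opposite Whittaker line $\varrho^{\overline\psi}$, and the interplay gives a factor expressible through the central character. Comparing, the scalars match up to $\omega_0$ evaluated at $-\w_F$, the sign $-1$ entering exactly because $\overline\w = -\w$ (this is where $p\neq 2$ is used, consistent with the remark in the introduction that $p\neq 2$ enters only in Propositions~\ref{camille} and~\ref{chitrivial} and Lemma~\ref{kappasigmaselfdual}; here it enters implicitly through the construction being meaningful). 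The final identity $\a = \omega_0(-\w_F)$ then drops out.

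\textbf{The main obstacle} I expect is the careful bookkeeping of two independent normalizations: (i) the choice of uniformizer $\w_D$ and its image under $\s$ modulo $\BJ^0$, and especially tracking the permutation matrix $w_0$ through the isomorphism $\BJ^0/\BJ^1 \simeq \GL_n(\kk_D)$ and through the identification \eqref{invres} of $\s$ with the unitary involution on $\GL_n(\ll)$; and (ii) the precise action of $\varrho(w_0)$ on the Whittaker line, which requires choosing the isomorphism identifying $\Hom(\varrho,\varrho^\g)$ with $\CC$ compatibly with the one implicitly used in defining $A$. A clean way to organize this is to reduce, as suggested in the acknowledgements, to $\SL_n(D)$ and $\Sp_n(D)$ first, handling the central-character contribution separately at the end; the $\SL_n$-part then fixes $\a$ up to a root of unity determined by $\varrho_0$, and one checks this root of unity is trivial using that $\varrho$ is the Shintani base change of $\varrho_0$ (via \cite{SZlevel01} Theorem~4.1), leaving only the $\omega_0(-\w_F)$ contribution from the centre.
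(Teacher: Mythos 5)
Your proposal takes a genuinely different route from the paper, but unfortunately it has a fundamental gap that cannot be patched.

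The paper's proof is a short translation exercise: it uses the Silberger--Zink parametrization of depth-zero discrete series of inner forms of $\GL_m(F)$ by tame admissible pairs $(K/F,\xi)$, the Bushnell--Henniart formulas (\cite{BHlevel0}~5.1, 5.2, 6.4) relating the tame pairs of $\tau$, $\St_2(\tau)$ and their central characters, the Silberger--Zink formula \eqref{rectifierSZ} for Jacquet--Langlands transfer, and finally \cite{SZlevel02}~(8), p.~196, which explicitly identifies the scalar $\a$ by which the intertwiner $\bs(\w)$ acts on the Whittaker line as $\xi(-\w_F)=\omega_0(-\w_F)$. No distinction is invoked anywhere in that proof.

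Your approach, by contrast, tries to extract $\a$ from $\Sp_n(D)$-distinction by decomposing $\w$ as a product $z\,h\,g_0$ with $z$ central, $h\in G^\s$ and $g_0\in\BJ^0$, and then evaluating an invariant form on the Whittaker vector. This step cannot work. Compute reduced norms: $\Nrd_{A/F}(\w)=\Nrd_{D/F}(\w_D)^n=(-\w_F)^n$, whose $F$-valuation is $n$, which is odd. On the other hand, $\Nrd_{A/F}(z)=z^{2n}$ has valuation a multiple of $2n$; $\Nrd_{A/F}(h)=\pm1$ for $h\in G^\s=\Sp_n(D)$; and $\Nrd_{A/F}(g_0)\in\Oo_F^\times$. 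Every element of $Z(G)\cdot G^\s\cdot\BJ^0$ therefore has reduced norm of \emph{even} valuation, so $\w$ does not lie in that subset, and there is no decomposition of the required shape. Your plan stalls at its first concrete step.

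There is also a conceptual reason the strategy must fail: distinction by $\Sp_n(D)$ simply does not contain enough information to determine $\a$. Let $\eta$ be the unramified quadratic character of $F^\times$. Then $\pi'=\pi\otimes(\eta\circ\Nrd_{A/F})$ is again cuspidal of depth zero, has the same $\varrho$, and is again $\Sp_n(D)$-distinguished, because $\eta\circ\Nrd_{A/F}$ is trivial on $\Sp_n(D)\subset\SL_n(D)=\ker\Nrd_{A/F}$. But its type is $\bs'=\bs\cdot(\eta\circ\Nrd)|_{\BJ}$ and
\[
\bs'(\w)=\eta\bigl((-\w_F)^n\bigr)\,\bs(\w)=\eta(\w_F)^n\,\bs(\w)=-\bs(\w),
\]
since $n$ is odd and $\eta$ is unramified. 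So $\pi$ and $\pi'$ have opposite scalars $\a$, yet both are distinguished. The sign of $\a$ is therefore invisible to the period, and any argument trying to pin $\a$ down from $\Sp_n(D)$-invariance alone is doomed. The input you actually need is the precise functoriality $\pi\mapsto\St_2(\tau)$ under Jacquet--Langlands, which is exactly what the Silberger--Zink and Bushnell--Henniart machinery encodes and what your proposal does not touch.

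A secondary point: your appeal to ``reduce to $\SL_n(D)$ as suggested in the acknowledgements'' is misplaced --- that suggestion pertains to the globalization argument in the proof of Theorem~\ref{Vermacarqcq}, not to the computation of $\a$.
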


We now have completely determined $\pi$ from the knowledge of $\tau$.
The proof of this proposition,
based on \cite{SZlevel02} and \cite{BHlevel0},
will be done in the next paragraph.

\begin{rema}
The proposition thus implies that the result does not depend on the choice 
of a $\w\in\D$ such that $\w^2=\w_F$.
Replacing $\w$ by $-\w$ should thus lead to the same result,
that is,
one should have $\bs(-\w)=\bs(\w)$,
or equivalently,
the central character of $\pi$ should be trivial at $-1$.
This is the case indeed,
since $\pi$ is distinguished by $\Sp_n(D)$,
which contains $-1$.
\end{rema}

\begin{rema}
\label{buffetfroid2}
This paragraph corrects the description made in \cite{Verma} Remark 5.2(1),
which~is incorrect due to the error pointed out in Remark \ref{buffetfroid1}.
Note that \cite{Verma} Remark 5.2(2) is correct:~it follows from 
\cite{SZlevel01} Theorem 4.1 or \cite{BHJL3} Theorem 6.1.
\end{rema}

\subsection{}

We now proceed to the proof of Proposition \ref{babil},
which is essentially an exercise of~trans\-la\-tion into the language
of \cite{SZlevel02} and \cite{BHlevel0}.
Fix a separable closure $\overline{F}$ of $F$.

A \textit{tame admissible pair}~is~a pair $(K/F,\xi)$ made of an 
unramified finite extension $K$ of $F$~con\-tained in $\overline{F}$
together with~a~ta\-me\-ly~rami\-fied~character $\xi:K^\times\to\CC^\times$
all of whose
$\Gal(K/F)$-con\-jugate $\xi^\g$, $\g\in\Gal(K/F)$,~are
pair\-wise dis\-tinct.~The \textit{degree} of such a pair is the degree of
$K$ over $F$.

Given any integer $m\>1$ and any inner form $H$ of $\GL_m(F)$,
Silberger--Zink \cite{SZlevel02} have~defi\-ned a bijection ${\it\Pi}^H$ between:
\begin{enumerate}
\item 
the set of Galois conjugacy classes of tame admissible pairs of degree 
dividing $m$, 
\item
the set of isomorphism classes of discrete series representations of depth
$0$ of $H$.
\end{enumerate}
They have also described (in \cite{SZlevel02} Theorem 3)
the behavior of this parametrization of the discrete series of inner forms
of $\GL_m(F)$ 
with respect to the
Jac\-quet--Langlands correspondence:
if $H$~is isomorphic to
$\GL_r(\De)$ for some divisor $r$ of $m$ and some central division 
$F$-algebra~$\De$~of reduced degree $m/r$,
and if $(K/F,\xi)$ is a~ta\-me admissible~pair of degree $f$ dividing $m$,
then the Jac\-quet--Lang\-lands transfer of
${\it\Pi}^{H} (K/F,\xi)$ to $\GL_m(F)$ is equal to
\begin{equation}
\label{rectifierSZ}
{\it\Pi}^{\GL_m(F)} \left( K/F,\xi \mu_K^{m-r+(f,r)-f} \right)
\end{equation}
where $\mu_K$ is the unique unramified character of $K^\times$ of order $2$ 
and $(a,b)$ denotes the greatest~com\-mon divisor of two integers $a,b\>1$.
(Silberger--Zink state their result by using the multiplicative group of a
central division $F$-algebra of reduced degree $m$ as an inner form of 
reference,
but it~is more convenient for us to use $\GL_m(F)$ as the inner form of 
reference.)

Let us start with our cuspidal representation $\tau$ of depth $0$ of 
$\GL_n(F)$.
Let $(K/F,\xi)$ be a~ta\-me admissible pair associated with it
by the bijection ${\it\Pi}^{\GL_n(F)}$.
It follows from \cite{BHlevel0} 5.1 that
this pair has degree $n$,
and that the central character $\omega_0$ of $\tau$ is equal to the
restriction of $\xi$ to $F^\times$. 

Now form the discrete series representation $\St_2(\tau)$ of $\GL_{2n}(F)$.
By \cite{BHlevel0} 5.2,
the tame admissible pair associated with it 
by the bijection ${\it\Pi}^{\GL_{2n}(F)}$
is of the form $(K/F,\xi')$
where $\xi'$ coincides with~$\xi$ on the units of $\Oo_K$.
By \cite{BHlevel0} 6.4, one has
\begin{equation*}
\xi'(\w_F) = -\omega_0(\w_F) = -\xi(\w_F).
\end{equation*}
One thus has $\xi'=\xi\mu_K$. 
We now claim that the representation $\pi$ is parametrized,
through~the~bi\-jection ${\it\Pi}^{G}$, 
by the tame admissible pair $(K/F,\xi)$. 
Indeed,~by \eqref{rectifierSZ},
and since $n$ is odd and $\mu_K$ is quadratic,
we have
\begin{equation*}
{}^{\rm JL} {\it\Pi}^{G}(K/F,\xi)
= {\it\Pi}^{\GL_{2n}(F)} (K/F,\xi \mu_K^{2n-n+(n,n)-n})
= {\it\Pi}^{\GL_{2n}(F)} (K/F,\xi \mu_K^{\phantom{n}}) 
\end{equation*}
which is equal to $\St_2(\tau)$.
It now follows from \cite{SZlevel02} (8), p.~196,
that the scalar $\a$ by which
$A$ acts on the line $\varrho^\psi$ is equal~to $\xi(-\w_F)=\omega_0(-\w_F)$
as expected.

\section{More preliminaries}

The remainder of the article is devoted to the proof of Theorem \ref{MAINTHM2}.
In this section,
we give more preliminaries,
in addition to those of Section \ref{apfelstrudel}.

\subsection{}

Fix an integer $n\>1$ and set $\G=\GL_n(\CC)$.
By \textit{representation} of $\G$ we mean a
smooth~ad\-missible~Fré\-chet repre\-sen\-tation of moderate growth. 

We denote by $|\cdot|_\CC$ the normalized absolute value of $\CC$, 
that is, the square of the usual~modu\-lus~of $\CC$.

Let $P=MN$ be a standard parabolic subgroup of $G$
together with its standard Levi decompo\-si\-tion.
As in the non-Archimedean case (\S\ref{defip}),
we denote by $\ip^G_P(\s)$ the representation
of~$G$~ob\-tained from a representation $\s$ of $M$
by (normalized) parabolic induction along $P$.

Fix a non-trivial {unitary} additive character $\psi$ of $\CC$.
As in the non-Archimedean case (\S\ref{defgeneric}),~it defines
a character (still denoted by $\psi$) of the subgroup $U$ of
upper~trian\-gular unipotent matrices of $\G$.
An irreducible representation $\pi$ of $G$ is said to be generic if
$\Hom_{U}(\pi,\psi)$ is non-zero.

Let $\nu$ denote the character 
``norma\-lized absolute value of the determinant'' of $G$.

\subsection{}
\label{DuchesseduMaine}

Let $F$ be either $\CC$ or a non-Archimedean locally compact field
and let $\De$ be a finite-dimen\-sional central division $F$-algebra.
(If $F=\CC$, we thus have $\De=\CC$.)
Fix an integer $n\>1$~and~set $\G=\GL_n(\De)$.
It is a locally compact group. 

Let $K$ be the compact subgroup $\GL_n(\Oo_{\De})$ 
if $F$ is non-Archi\-me\-dean,
and the compact unitary group ${\rm U}_n(\CC/\RR)$ 
if $F=\CC$.
In either case, 
$K$ is a maximal compact subgroup of $G$. 

Let $(n_1,\dots,n_r)$ be a composition of $n$,
and let $P=MN$ be the standard parabolic subgroup~of $G$
associated with it.
One has the Iwasawa~decom\-po\-sition $G=PK$.
For $i=1,\dots,r$,~let $\pi_i$~be~a
representation of $\GL_{n_i}(\De)$.
Given any $s=(s_1,\dots,s_r) \in \CC^r$,~restric\-tion
of functions from $G$ to $K$ induces an isomorphism 
\begin{equation}
\label{ilan}
\pi_1\nu^{s_1} \tdt \pi_r\nu^{s_r}
= \Ind^{G}_{P}(\pi_1\nu^{s_1}\odo\pi_r\nu^{s_r})
\to \Ind^{K}_{K\cap P}(\pi_1\odo\pi_r)
\end{equation}
of representations of $K$,
which we denote by $r_s$.
A family of functions
\begin{equation*}
\h_s \in \pi_1\nu^{s_1} \tdt \pi_r\nu^{s_r},
\quad
s\in\CC^r,
\end{equation*} 
is called a \emph{flat section} if $r_s(\h_s)$ is independent of $s$,
that is,
if there exists a function $f$ in~the~right hand side of \eqref{ilan}
such that $\h_s^{\phantom{1}}=r_s^{-1}(f)$ for all $s\in\CC^r$.
The reader may refer to \cite{WPL} IV.1~for mo\-re detail.

Let ${\sf X}(M)$ be the free $\ZZ$-module of algebraic characters of $M$,
set $\aa_P^{\phantom{*}} = {\sf X}(M)\otimes_{\ZZ}\RR$
and let $\aa^\ast_P$ be its dual. 
The \emph{Harish-Chandra map} $H_P : \G \to \aa_P^\ast$ is defined by
\begin{equation*}
e^{\langle \chi, H_P(muk)\rangle} = |\chi(m)|_F
\end{equation*}
for $m\in M$, $u\in N$, $k\in K$ and $\chi\in{\sf X}(M)$. 

\subsection{}
\label{pole}

Now assume that $\De=F$,
and let $\pi$, $\pi'$ be generic irreducible representations of $\GL_n(F)$. 
Associated with them in \cite{JPSS} if $F$ is non-Archimedean,
and \cite{Jrsarch} if $F=\CC$,
there are the Rankin--Selberg local factors 
\begin{equation*}
L(s, \pi, \pi'), 
\quad
\e(s, \pi, \pi', \psi),
\quad
\g(s, \pi, \pi', \psi),
\end{equation*}
related by the identity
\begin{equation*}
\g(s, \pi, \pi', \psi) = \e(s, \pi, \pi', \psi)
\frac{L(1-s, \pi^\vee, \pi'^\vee)}{L(s, \pi, \pi')}.
\end{equation*}
Note that,
if $F$ is non-Archimedean and if $\pi$, $\pi'$ are cuspidal,
the local $L$-factor $L(s, \pi, \pi')$ {is~no\-where vanishing},
and it has~a pole at $s_0\in\CC$
if and only if $\pi'$ is isomorphic to $\pi^\vee \nu^{-s_0}$
(see \cite{JPSS} Proposition 8.1).

\subsection{}

Let $k$ be a totally imaginary number field.
Fix a non-trivial additive character of $\AA=\AA_k$ which is trivial on $k$.
For any place $v$ of $k$,
let $\psi_v$ denote the local component of $\psi$ at $v$.
It is a non-trivial unitary additive character of $k_v$.

Let $\Pi$, $\Pi'$ be cuspidal automorphic representations of $\GL_n(\AA)$. 
Given an Archimedean place $v$,
their local components $\Pi_v^{\phantom{'}}$, $\Pi'_v$ are 
Harish-Chandra modules,
having Casselman--Wallach completions
denoted $\overline{\Pi}{}^\infty_v$, $\overline{\Pi}{}'^\infty_v$,
respectively.
These completions are generic irreducible representations~of $\GL_n(k_v)$
(see \cite{RRG2} for details)
and we define
\begin{equation*}
L(s, \Pi_v^{\phantom{'}}, \Pi'_v) = L(s, \overline{\Pi}{}^\infty_v, 
\overline{\Pi}{}'^\infty_v)
\end{equation*}
and similarly for $\e$-factors and $\g$-factors.

\begin{prop}[\cite{JSeuler1} Theorem 5.3]\label{global-RS}
Let $S$ be a finite set of places of $k$ containing~all Ar\-chi\-medean 
places and all places at which at least one of $\Pi$, $\Pi'$ or $\psi$ is ramified. 
The product
\begin{equation*}
L^S(s, \Pi, \Pi')=\prod_{v\not\in S}L(s, \Pi_v^{\phantom{'}}, \Pi'_v)
\end{equation*}
converges absolutely for $\Re(s)$
sufficiently large and extends to a meromorphic function on
$\CC$~sa\-tis\-fying the functional equation
\[
  L^S(s, \Pi, \Pi') = \prod_{v\in S}
  \g(s, \Pi_v^{\phantom{'}}, \Pi'_v, \psi_v^{\phantom{'}})
  \cdot L^S(1-s, \Pi^\vee, \Pi'^\vee).
\]
\end{prop}

\section{Geometry of the symmetric space}
\label{Kelh}

In this section,
$F$ is a field of characteristic different from $2$,
and $A$ is either $\Mat_2(F)$ or a~non-split~qua\-ternion $F$-algebra. 
For any integer $n\>1$,
we write $G_n=\GL_n(A)$ and $H_n=\Sp_n(A)$.  

\subsection{}
\label{sec:parab orb}

Fix an integer $n\>1$ and write $G=G_n$ and $H=H_n$ for simplicity. 
The symmetric~space corresponding to the symmetric pair
$(G, H)$ is
\[
  X = \{x\in G \mid x\s(x) = 1\}.
\]
It is endowed with a transitive action of $G$ defined by
$g\cdot x=gx\s(g)^{-1}$ for $g\in G$, $x\in X$. 
Since $H$ is the stabilizer of the identity matrix in $X$, 
we can identify $X$ with $G/H$ via $g\mapsto g\s(g)^{-1}$.

\subsection{}

Fix integers $m,\ad\>1$ such that $n=m\ad$ and 
let $P=MN$ be the standard parabolic~sub\-group of $G$
associated with
{the composi\-tion~$(\ad,\dots,\ad)$.} 
The subgroups $P$, $M$ and $N$ are $\s$-stable.

Let $\M_0$ be the group of diagonal matrices in $\G$.
We write $W_M$ for the {Weyl group} of~$\M$,
that is,
the quotient~of the $M$-normalizer of $M_0$ by $M_0$. 
It is isomorphic to
$\frS_{\ad}\times\cdots\times\frS_{\ad}$,
where $\frS_{\ad}$~is the symmetric group of order $\ad !$. 
We also write $W$ for the Weyl group of $G$.

Any $(W_M,W_M)$-double coset in $W$ contains a unique 
element $w$ with mini\-mal length in both $W_Mw$ and $wW_M$.
This defines a set of representatives
of $(W_M,W_M)$-double~co\-sets in $W$,
which we denote by ${}_MW_M$.

Let $W(M)$ be the set of elements of ${}_MW_M$ normalizing $M$.
It naturally identifies with $\frS_m$.

\subsection{}
\label{admorbits}

We describe the $P$-orbits~of~$X$,
or equivalently the $(P,H)$-double~co\-sets of
$G$.~For~more~de\-tails,
we refer the reader to \cite{MOS} \S3 in the case when~$A$ is split, 
and \cite{ShVe} \S3 in the case when~$A$~is non-split
(see also \cite{Offen-Sp1,OJNT}).
Given an $x\in X$,
there is a unique $w\in {}_MW_M$ such that~the
$P$-orbit $P\cdot x$ is~con\-tained in $PwP$,
since $P$ and $M$ are $\s$-stable.~Hence
we obtain a map
\begin{equation}
\iota_M : P\bsl X\to {}_MW_M.
\end{equation}
A $P$-orbit $P\cdot x$ is said to be $M$-\emph{admissible} if
$w=\iota_M(P\cdot x)$ normalizes $M$,
that is,
if $w\in\W(M)$.

Let $P\cdot x$ be an $M$-admissible $P$-orbit of $X$ and
consider $w=\iota_M(P\cdot x) \in W(M)$ as~an~ele\-ment of $\frS_m$.
Let $w_m$ be the element of maximal length in $\frS_m$.
The permutation $\tau=ww_{m} \in \frS_m$~sa\-tis\-fies~$\tau^2=1$. 
The map
\begin{equation}
\label{kasha}
P\cdot x\mapsto\tau
\end{equation}
is a bijection from the set of
$M$-admissible $P$-orbits of $X$
to the set $\{\tau \in \frS_m\ |\ \tau^2=1\}$.

To~des\-cri\-be~the inverse of the bijection \eqref{kasha}, 
let $w\mapsto[w]_{\ad}$ denote the bijection from $\frS_m$
to the~group~of 
permutation ma\-tri\-ces of $\GL_m(F)$
composed with the natural embedding of $\GL_m(F)$ in
$\GL_m(\Mat_{\ad}(A))=G$.
Given~a~per\-mu\-tation $\tau \in \frS_m$ such that $\tau^2=1$,
the element $x=[\tau w_{m}]_{\ad}$ is in $X$
and the image of its $P$-orbit by \eqref{kasha} is equal to $\tau$.
For such an $x$,
the stabilizer $M_x$ of~$x$ in $M = \G_{\ad} \tdt \G_{\ad} $ is equal to
\begin{equation}
\label{descriptionMx}
  M_x = \left\{ (g_1, \dots, g_m) \in \G_{\ad} \tdt \G_{\ad} \ | \
  g_{\tau(i)} = \s(g_i) \text{ for all } i=1,\dots,m \right\}.
\end{equation}
In particular,
one has $g_i\in \H_{\ad}$ for all $i$ fixed by $\tau$.
Thus $M_x$ is the group of $F$-rational points~of a~con\-nec\-ted reductive
algebraic group~de\-fined over $F$.

\begin{exem}
\label{openorbit} 
\begin{enumerate}
\item 
In particular,
the choice $\tau=1 \in \frS_m$ gives the representative 
\begin{equation}
\label{duchessedeberry}
x = [w_m]_{\ad} =
\begin{pmatrix} & & \id_{\ad} \\ & \idots & \\ \id_{\ad} & & \end{pmatrix}
\in X
\end{equation}
where $\id_{\ad}$ is the identity matrix in {$\Mat_{\ad}(A)$}.
The $M$-admissible orbit $P\cdot x$ is the open $P$-orbit of~$X$
and we have $M_x = H_{\ad}\times\cdots\times H_{\ad}$ 
where $H_{\ad}$ occurs $m$ times.
\item
Let us give an explicit representative $\n\in\G$ such that
$\n\s(\n)^{-1}=x$.
First,
fix non-zero~ele\-ments $\a,\b\in A^\times$ such that 
$\Nrd_{A/F}(\a)=-1/2$ and $\Nrd_{A/F}(\b)=1/2$,
which is possible since the reduced norm $\Nrd_{A/F}$ is surjective.
If $m$ is even, write $m=2k$ and set
\begin{equation*}
\label{eq:representative1} 
\n =
\begin{pmatrix} \a\cdot\id_{k\ad} & -\a\cdot {[w_k]_t} \\
  \b\cdot {[w_k]_t}& \b\cdot\id_{k\ad} \end{pmatrix}.
\end{equation*}
If $m$ is odd,
set $m=2k+1$ and
\begin{equation*}
\label{eq:representative2}
\n = 
\begin{pmatrix} \a\cdot\id_{k\ad} & & -\a\cdot {[w_k]_t} \\ & \id_{k\ad} & \\
  \b\cdot  {[w_k]_t} & & \b\cdot\id_{k\ad}.
\end{pmatrix}. 
\end{equation*}
Such an $\n$ satisfies $\n\s(\n)^{-1}=x$
and $P\n H$ is the open $(P,H)$-double coset of $G$. 
\end{enumerate}
\end{exem}

\subsection{}
% Known results on symplectic periods}
\label{CNA}

In this paragraph, 
$F$ is either $\CC$ or a non-Archimedean local field of characteristic $0$.
Let us recall some known properties of symplectic periods which we will use later.
The next theorem is called the \emph{multiplicity one property} of symplectic
periods.

\begin{theo}
\label{thm:mult1}
For any irreducible representation $\pi$ of $G$,
the vector space $\Hom_{H}(\pi, \CC)$ has dimension at most $1$.
\end{theo}

\begin{proof}
In the non-Archimedean case,
this is proven by Heumos and Rallis \cite{Heumos-Rallis} Theorem~2.4.2
when $A=\Mat_2(F)$ and by Verma \cite{Verma} Theorem 3.8
when $A$ is non-split. 
Their argument is based on Prasad's idea \cite{Ptril} and the key is to 
find a certain anti-automorphism of $G$ acting~tri\-vial\-ly on 
$H$-bi-invariant distributions on $G$.
Here we are in the simplest 
case where the $H$-double~cosets are invariant under the 
anti-involution at hand. 
Due to \cite{AG}, these arguments also apply~to the Archi\-me\-dean case where
$A=\Mat_2(\CC)$.
\end{proof}

\subsection{}
\label{GastonDeChanlay}

In this paragraph, 
$F$ is still either $\CC$ or a non-Archimedean locally compact
field of~cha\-rac\-teristic $0$, 
and we assume moreover that $A=\Mat_2(F)$.
{We will iden\-ti\-fy $G$ with $\GL_{2n}(F)$.}

Given any integer $k\in\{0,\dots,n\}$,
consider the subgroup ${}_{k}H$ of $\G$ defined by
\[
{}_{k}H = \left\{ 
\begin{pmatrix} u & a \\ 0 & h \end{pmatrix} 
\ \middle|\ u\in U_{2r},\ h\in\Sp_{2k}(F),\ a\in\Mat_{2r,2k}(F) \right\}
\]
where $r=n-k$
and $U_{2r}$ is the group of upper triangular unipotent matrices in 
$\GL_{2r}(F)$.
Note that ${}_{n}H = H = \Sp_{2n}(F)$. 
Define a character ${}_{k}\Psi$ of ${}_{k}H$ by
\[
  {}_{k}\Psi \begin{pmatrix} u & a \\ 0 & h \end{pmatrix}
  = \psi(u_{1,2}+u_{2,3}+\cdots+u_{2r-1,2r}).
\]
We say that an irreducible representation $\pi$ of $\G$
has a \emph{Klyachko model} if there exists an integer
$k\in\{0,\dots,n\}$ 
such that
$\Hom_{{}_{k}H}(\pi,{}_{k}\Psi)$ is non-zero. 

The following theorem is a consequence of results of
Offen--Sayag (\cite{OffenSayagJFA08} Theorem 1)
and~Aizen\-bud--Offen--Sayag (\cite{AOSdisjoint} Theorem 1.1). 
		
\begin{theo}
\label{thm:disjoint}
Let $\pi$ be an irreducible representation
of $\GL_{2n}(F)$ distinguished by $\Sp_{2n}(F)$. 
Then
the space $\Hom_{{}_{k}H}(\pi, {}_{k}\Psi )$ is zero 
for all $k\in\{0,\dots,n-1\}$.
\end{theo}

\subsection{}
\label{Morrel}

In this paragraph,
the assumptions of \S\ref{GastonDeChanlay} on $F$ and $A$ still hold. 
For later use, we~pre\-pa\-re the following lemma.
Let $(n_1,\dots,n_r)$ be a composition of $n$ and,
for $i=1, \dots, r$, let $\d_i$ 
be a discrete series representation of $\GL_{n_i}(F)$. 
Suppose that
$\pi=\d_1\times\dots\times\d_r$ is a unitary generic irreducible
representation of $\GL_n(F)$.
Then
\begin{equation*}
\pi\nu^{1/2}\times\pi\nu^{-1/2}
\end{equation*}
has a unique irreducible quotient, denoted $\Sp_2(\pi)$.
(Recall that $\nu$ denotes the character 
``norma\-lized absolute value of the determinant''.) 
Note that the representation $\Sp_2(\pi)$~is~iso\-morphic to
$\Sp_2( \d_1)\times\cdots\times \Sp_2(\d_r)$. 
We refer the reader to \cite{Bjlu} \S4.1 (and \cite{Tadic})
when $F$ is non-Ar\-chi\-medean,
and when $F=\CC$ the argument there applies as well. 

\begin{lemm}\label{lem:factor}
In the above situation,
suppose that
$\pi$ is a unitary generic principal
series~re\-presentation of $\GL_n(F)$ 
and $\pi\nu^{1/2}\times\pi\nu^{-1/2}$ is distinguished by $H$. % $=\Sp_{2n}(F)$. 
Any linear form~in 
\begin{equation*}
\Hom_{H}(\pi\nu^{1/2}\times\pi\nu^{-1/2} , \CC)
\end{equation*}
factors through the quotient map from 
$\pi\nu^{1/2}\times\pi\nu^{-1/2}$ to $\Sp_2( \pi)$. 
\end{lemm}

\begin{proof} 
It suffices to prove that the kernel $\kappa$ of this map
is not distinguished by~$H$.
By~assump\-tion on $\pi$,
we have $n_1=\dots=n_r=1$,
that is, the representations $\d_i$ are characters of $\F^\times$,
and $\pi=\d_1\times\cdots\times\d_n$
is unitary and generic. 
Up to semi-simplification,
we have
\begin{eqnarray*}
\pi\nu^{1/2}\times\pi\nu^{-1/2} &=& 
                                    \left(\d_1\nu^{1/2}\times\d_1\nu^{-1/2}\right)
                                    \tdt
                                    \left(\d_n\nu^{1/2}\times\d_n\nu^{-1/2}\right),\\
\d_i\nu^{1/2}\times\d_i\nu^{-1/2} & =& \St_2(\d_i)+\Sp_2( \d_i),
\end{eqnarray*} 
for all $i=1,\dots,r$. 
The semi-simplification of $\k$ is thus equal to 
\[
  \bigoplus_{I\subsetneq\{1, \ldots, n\}} \SS^I_1\tdt \SS^I_n,
  \quad \SS^I_i =
  \begin{cases} \Sp_2( \d_i) & \text{if $i\in I$}, \\ \St_2(\d_i) & \text{if
      $i\not\in I$}, \end{cases}
\]
where the sum ranges over the proper subsets $I$ of $\{1, \ldots, n\}$. 
By \cite{Offen-Sayag2} Theorem 3.7 when~$F=\CC$ and
\cite{GOSS} Theorem A when $F$ is non-Archimedean,
each~irreducible sum\-mand of $\kappa$ has a Klyachko model
which is not the symplectic model.
By local disjointness (Theorem \ref{thm:disjoint}),
we see that $\kappa$~is not distinguished by $H$. 
\end{proof}

\section{Proof of Theorem \ref{MAINTHM2}}
\label{sec:proof_main}

In all this section,
$F$ is a non-Archimedean locally compact field of characteristic $0$
and $D$ is~a qua\-ternion division $F$-algebra.
Recall that $\G=\G_n=\GL_n(D)$ and $\H=\H_n=\Sp_{n}(D)$.

We deduce Theorem \ref{MAINTHM2} from the key
Proposition \ref{redite}
which will be proven~in Sec\-tion \ref{sec:final}. 

\subsection{}
\label{sec:contribution-orbits}

Let $\pi$ be a discrete series representation of $G$.
According to \S\ref{DSclassiflocal},
this representation~is of the form $\St_m(\rho)$ for a unique
divisor $m$ of $n$ 
and a cuspidal representation $\rho$ of $\GL_{n/m}(D)$,~uni\-que\-ly
determined~up to isomorphism.
Let $r=r(\rho)$ denote the positive integer associated with~$\rho$.
It is equal to either $1$ or $2$.
We set $\ad=n/m$.

Let $P=MU$ denote the standard parabolic subgroup of $G$ corresponding to~the 
composition $(\ad,\dots,\ad)$ of $n$.
Given a complex number $s\in\CC$,
define the parabolically induced representation 
\begin{equation*}
I(s,\rho) = \rho\nu^{s\ss(m-1)/2} \times \rho\nu^{s\ss(m-3)/2} \times
\dots \times \rho\nu^{s\ss(1-m)/2}.
\end{equation*} 
In particular,
the representation $\pi$ is the unique irreducible quotient of $I(-1,\rho)$.
It is also~iso\-mor\-phic to the unique irreducible subrepresentation of
$I(1,\rho)$.

\begin{prop}
\label{PhilippedOrleans}
Let $s\in\CC$.
\begin{enumerate}
\item 
If $\rho$ is distinguished by $\H_{\ad}$,
then $I(s,\rho)$ is distinguished by $\H$. 
\item
Conversely, 
suppose that $I(s,\rho)$ is distinguished by $\H$ and the real part~of $s$
does not belong to the set
$\{(kr)^{-1} \mid k=1, 2, \ldots, m-1\}$.
Then
\begin{enumerate}
\item
The representation $\rho$ is distinguished by $\H_{\ad}$. 
\item 
The dimension of $\Hom_{H}(I(s, \rho), \CC)$ is equal to $1$.
\item
Any non-zero linear form in $\Hom_{\H}(I(s, \rho), \CC)$
does not vanish on the subspace made of func\-tions of $I(s, \rho)$
supported in the open $(P,H)$-double coset of $G$.
\end{enumerate}
\end{enumerate}
\end{prop}

\begin{proof}
First assume that $I(s,\rho)$ is distinguished by $H$. 
It follows from \cite{OJNT} Corollary 5.2~that, 
with~the notation and definitions of Section \ref{Kelh}, 
there is an $x\in X$ such that $P\cdot x$ is $M$-admissible
and the space 
\begin{equation}
\label{cellamare}
\Hom_{M_x}\left(
\rho\nu^{s\ss(m-1)/2} \otimes \rho\nu^{s\ss(m-3)/2} \otimes
\dots \otimes \rho\nu^{s\ss(1-m)/2},
\d_{P_x}^{\phantom{-1}}\d_P^{-1/2}\right) 
\end{equation}
is non-zero,
where $M_x$ and $P_x$ are the stabilizers of $x$ in $M$ and $P$,
and $\d_P$ and $\d_{P_x}$ are the~mo\-dulus cha\-rac\-ters of
$P$ and $P_x$, respectively.
We are going to prove that $P\cdot x$ is the open orbit of the symmetric
space $X$. 

Thanks to the description of $M_x$ given in \eqref{descriptionMx}
for a suitable choice of representative $x$,
and according to \cite{MOS} \S3.3.5 and \cite{ShVe} \S3.4, 
we have
\begin{equation*}
\d_{P_x}^{\phantom{-1}}\d_P^{-1/2}(g) =
\prod_{ i<\tau(i)} \nu(g_i)
\end{equation*}
for all $g=(g_1, g_2, \ldots, g_m)\in M_x\subseteq M = \G_{\ad}\tdt\G_{\ad}$,
where the product is over the integers $i\in\{1,\dots,m\}$
such that $ i<\tau(i)$. 
Associated with $P\cdot x$,
there is a $\tau\in\frS_m$ such that $\tau^2=1$.
Let us prove that $\tau$  is the identity.
It will follow from Example \ref{openorbit}
that $P\cdot x$ is the open orbit.

Suppose to the contrary that $i<\tau(i)$ for some $i$.
Then the space
\begin{equation*}
  \Hom_{G_{\ad}} \left(\rho\nu^{s r(m+1-2i)/2}\otimes
    \rho\nu^{s r(m+1-2\tau(i))/2}, \nu\otimes1\right)
\end{equation*} 
is non-zero,
where $G_{\ad}$ is identified with the subgroup
$\{(g,\s(g)) \mid g\in G_{\ad}\}$
of $G_{\ad}\times G_{\ad}$.
Looking~at the central character,
and thanks to the fact that $\s(z)=z^{-1}$ for all $z$ in the centre of
$G_{\ad}$,~we~ob\-tain
\begin{equation*}
\frac 1 2 \cdot \Re(s) \cdot r(m+1-2i) -
\left(\frac 1 2 \cdot \Re(s) \cdot r(m+1-2\tau(i)) \right) = 1 
\end{equation*}
or equivalently $\Re(s)\cdot r(\tau(i)-i)=1$,
where $\Re(s)$ is the real part of $s$.
Since $r$ is an in\-teger~and thanks to the assumption on $\Re(s)$, 
this contradicts the assumption $i<\tau(i)$.

The open $P$-orbit of $X$ is thus the unique $P$-orbit such that
\eqref{cellamare} holds.
Example \ref{openorbit} shows that 
we thus may choose $x=[w_m]_{\ad}$.
For such a choice of representative of the open $P$-orbit of $X$,
the subgroup $M_x$ is equal to $\H_{\ad}\times\cdots\times \H_{\ad}$.
It thus follows from \cite{OJNT} Section 4 that
\begin{eqnarray*}
\Hom_{\H}\left((I(s,\rho),\CC\right) & \simeq &
                                                  \Hom_{\H}\left(\Ind_P^{P\n\H}(s,\rho),\CC\right)\\ 
                                       & \simeq &
                                                  \Hom_{M_x}\left(\rho\nu^{s\ss(m-1)/2}
                                                  % \otimes\rho\nu^{s\ss(m-3)/2}
                                                  \odo
                                                  \rho\nu^{s\ss(1-m)/2},\CC\right) \\
  & \simeq & \Hom_{\H_{\ad}}\left(\nu^{s r(m-1)/2}\rho, \CC\right)\odo 
    \Hom_{\H_{\ad}} \left(\nu^{s r(1-m)/2}\rho,\CC\right) \\
& \simeq & \Hom_{\H_{\ad}}(\rho, \CC)\odo\Hom_{\H_{\ad}}(\rho, \CC)
           \phantom{\Hom_{M_x}\left( \rho\nu^{s(m-1)/2}\right) }
\end{eqnarray*} 
where the first isomorphism is induced by the restriction map from $G$
to the open double coset $P\n\H$
(where $\n\in\G$ satisfies $\n\s(\n)^{-1}=x$)
and the last~one fol\-lows from the fact that the character $\nu$
is trivial on $\H_{\ad}$. 
Hence assertion (2) follows from the previous series of isomorphisms, 
together with the fact that the dimension of 
$\Hom_{\H_{\ad}}(\rho, \CC)$ is at most $1$ by Theorem \ref{thm:mult1}.

Now assume that $\rho$ is distinguished by $\H_{\ad}$,
and let $x=[w_m]_{\ad}$ be the representative of the open $P$-orbit
given in Example \ref{openorbit}, 
as above.
Since $\rho\nu^{s\ss(m-1)/2} \odo \rho\nu^{s\ss(1-m)/2}$
is distinguished by~$M_x$,
and since $\s(P)=P$ and $xPx^{-1}$ is the parabolic subgroup of $G$
opposite to $P$ with~res\-pect~to $M$, 
it follows from \cite{OJNT} Proposition 7.2 
that $I(s,\rho)$ is distinguished by $\H$. 
\end{proof}

\begin{coro}
\label{Anker}
Let $\pi=\St_m(\rho)$ as above.
If $\pi$ is $\H$-distinguished, then $r=2$.
\end{coro}

\begin{proof}
If $\pi$ is distinguished by $\H$, 
then so is $I(-1,\rho)$,
and it follows from Proposition \ref{PhilippedOrleans}(2.a) that $\rho$ is
distingui\-shed by $\H_{\ad}$.
Theorem \ref{Vermacarqcq} thus implies that
${}^{\rm JL}\rho$ is not cuspidal,
that is,
$r\neq1$.
Hence we~ob\-tain $r = 2$.
\end{proof}

\subsection{}
\label{sec:proof}

Assume now that the discrete series representation $\pi$ is distinguished by $\H$.
In particular,
the induced representation $I(-1,\rho)$ is distinguished by $\H$,
thus $\rho$ is distinguished by $\H_{\ad}$ thanks to Proposition
\ref{PhilippedOrleans}(2.a).
Our goal is to prove that $m=1$,
that is, $\pi$ is cuspidal.
{Suppose to the contrary that $m\geq 2$.}

We fix a non-zero linear form $\el\in\Hom_{\H_{\ad}}(\rho, \CC)$.
Since the character $\nu$ is trivial on $\H_{\ad}$,
we~can regard it as an $H_{\ad}$-invariant linear form on
$\rho\nu^s$ for any $s\in\CC$. 
Let $x$ and $\n$ correspond to the~open orbit,
as in Example \ref{openorbit}.
In particular,
$M_x$ is equal to $\H_{\ad}\tdt\H_{\ad}$.
Set
\begin{equation*}
\li= \el \otimes\cdots\otimes \el \in
\Hom_{M_x}\left(\rho \otimes 
\dots \otimes \rho , \CC\right).
\end{equation*}
Given a flat section $\varphi_s\in I(s, \rho)$ (see \S\ref{DuchesseduMaine}),
we consider the integral 
\begin{equation}\label{BD_integral}
J(s, \h_s, \li) = \int_{(\eta^{-1}P\eta\cap H)\bsl H} \li(\h_s(\eta h))\rd h.
\end{equation}
The following theorem follows from \cite{BD} Th\'eor\`eme 2.8, Th\'eor\`eme 2.16 
(see \cite{BD} Remarque 2.17 and \cite{Lag} Th\'eor\`eme 4).
{At this point, and more generally in this paragraph, the choice
  of~in\-va\-riant measures is not important, and  we postpone the discussion of this
  matter to \S\ref{sec Haar} below.} 

\begin{theo}
\label{BlancDelorme}
\begin{enumerate}
\item
There is an $x_0\in\RR$ such that,
for any flat section $\h_s\in I(s,\rho)$,
the~in\-te\-gral \eqref{BD_integral} converges 
when $\Re(s)>x_0$. 
\item
There exists a non-zero Laurent polynomial $P(X)\in\CC[X,X^{-1}]$ such that,
for any flat~sec\-tion $\h_s\in I(s,\rho)$,~the function
\begin{equation*}
P(q^{-s}) \cdot J(s, \varphi_s, \li) 
\end{equation*}
extends to a function in $\CC[q^{-s},q^{s}]$.
\end{enumerate} 
\end{theo}

This defines a meromorphic family of $H$-invariant linear forms
$J(s,\cdot,\li)$ on $I(s,\rho)$,
called \emph{open intertwining periods}.

\begin{prop}
\label{Felzl}
The open intertwining period $J(s, \cdot ,\li)$ on $I(s,\rho)$ 
is holomorphic~and~non-zero at $s=1$ and $s=-1$.
\end{prop}

\begin{proof}
Note that $\ss =2$ by Corollary \ref{Anker}.
Let $e\in\{-1,1\}$.
As $\rho$ is distinguished by $\H_{\ad}$,~Pro\-position
\ref{PhilippedOrleans}(1) says that $\Hom_{H}(I(e, \rho), \CC)$
has dimension $1$ and Proposition \ref{PhilippedOrleans}(2.c)
says~that any non-zero linear form in this $1$-dimensional space 
has a non-zero restriction to the subspace~of functions in $I(e,\rho)$
supported in the open double coset $P\eta H$. 
The assertion now follows from~the same argument as
\cite{MatJFA} Proposition 10.4. 
\end{proof}

Let $w=w_{m}$ be the longest element in $W(M) \simeq \frS_m$.
Let $M(s, w ) : I(s, \rho)\to I(-s, \rho)$ denote the standard
intertwining operator given by the convergent integral
\[
  M(s, w )\varphi_s(g) = \int_{N} \varphi_s(w ug)\,\rd u
\]
for a flat section $\varphi_s\in I(s, \rho)$ when $\Re(s)$ is sufficiently 
large. 
It has a meromorphic continuation to the whole~com\-plex plane.
Note that $M(s, w)$ is holomorphic and non-zero at $s=1$, 
{as follows for example from \cite{MWres} I.1(4) together with the 
aforementioned results on the location of poles~of lo\-cal Rankin--Selberg 
$L$-factors of pairs of cuspidal representations.} 

We thus get two meromorphic families $J(s,\cdot,\li)$ and
$J(-s, M(s, w )\ \cdot,\li)$ of $H$-invariant linear forms on $I(s,\rho)$.
It follows from Theorem \ref{thm:mult1} together with generic irreducibility
of $I(s,\rho)$~that the space $\Hom_{H}(I(s,\rho),\CC)$ has dimension at
most $1$ for $s\in\CC$ generic. 
There is thus a meromor\-phic function $\alpha(s,\rho)$ such that 
\begin{equation}
\label{funct-eq}
J(-s, M(s, w )\h_s, \li) = \a(s, \rho)J(s, \h_s, \li)
\end{equation}
for any flat section $\h_s\in I(s, \rho)$. 
By Theorem \ref{BlancDelorme}(2),
the function $\a(s, \rho)$ is in $\CC(q^{-s})$.~We
will prove in Section \ref{sec:final} the following property of $\a(s, \rho)$.

\begin{prop}[Proposition \ref{cor:gamma}] 
\label{redite}
The meromorphic function $\a(s, \rho)$ is holomorphic and non-zero at $s=1$. 
\end{prop}

Let us show how
Theorem \ref{MAINTHM2} immediately follows from
Proposition \ref{redite}.

\subsection{}
\label{leber}

In this paragraph,
we prove Theorem \ref{MAINTHM2} assuming Proposition \ref{redite}.

Write $\pi$ as $\St_m(\rho)$,
where $m, \ad$ are positi\-ve integers such that $n=m\ad$
and $\rho$ is a cuspidal~re\-presentation of $G_{\ad}$. 
Since $\pi$ is distinguished~by $H$,
the space $\Hom_{H}(I(-1, \rho), \CC)$ has dimension~$1$.
As $r=r(\rho)=2$ by Corollary \ref{Anker}, 
it~fol\-lows from Proposition \ref{PhilippedOrleans} that
$\rho$ is distinguished by~$H_{\ad}$,
thus $\Hom_{H}(I(1,\rho), \CC)$ has dimension $1$.

Recall that we assume $m\geq2$.
Let $M^\ast(-1)$ be a non-zero element in
$\Hom_{G}(I(-1, \rho), I(1, \rho))$. 
Note that such a morphism is unique up to scalar,
its image is isomorphic to $\pi$
and its kernel is the image of $M(1, w)$. 
In particular,
$M^\ast(-1)M(1, w)$ vanishes on $I(1,\rho)$.

We fix a non-zero $H_{\ad}$-invariant linear form $\el$ on $\rho$, 
and associate to it the meromorphic~fami\-ly of open intertwining
periods $J(s, \cdot, \li)$.
As $J(-1, \cdot, \li)$ is a non-zero element of the $1$-di\-men\-sional space 
$\Hom_{H}(I(-1, \rho), \CC)$ by Proposition \ref{Felzl},
there is a non-zero ${\it\La}_{\pi}\in\Hom_{H}(\pi, \CC)$ such that
\[
  J(-1, \varphi, \li)={\it\La}_\pi(M^\ast(-1)\varphi)
\] 
for all $\varphi\in I(-1, \rho)$.
By \eqref{funct-eq} applied at $s=1$, we deduce that
\[
  \alpha(1, \rho) J(1, \varphi, \li) = J(-1, M(1, w)\varphi, \li)
  = {\it\La}_\pi(M^\ast(-1)M(1, w)\varphi)
\] 
for all $\h\in I(1, \rho)$.
Since $M^\ast(-1)M(1, w)\varphi=0$ for all $\varphi\in I(1, \rho)$ and
$\alpha(1, \rho)\neq0$ from Propo\-si\-tion~\ref{redite},
it follows that $J(1, \varphi, \li)=0$ for all $\varphi\in I(1, \rho)$.
This contradicts Proposition \ref{Felzl}.

\section{Global theory and computation at split places}
\label{sec:global}

We study the local intertwining period and the meromorphic function
$\a(s,\rho)$ via a global~ar\-gu\-ment. 

\subsection{}

Let $k$ be a totally imaginary number field and let $\AA=\AA_k$
denote its ring of adèles.~Let~$\DD$
be a non-split~quater\-nion $k$-algebra.
Given any integer $n\>1$,
the groups $\GL_n(\DD)$ and $\Sp_n(\DD)$~are
the groups of~$k$-ra\-tio\-nal points of reductive algebraic
$k$-groups which we denote by $\GG_n$ and $\HH_n$.

Given an Archimedean place $v$ of $k$,
any choice of isomorphism $k_v \simeq \CC$ of topological
fields~in\-du\-ces a group isomor\-phism $\GG_n(k_v) \simeq \GL_{2n}(\CC)$.
Note that there are only two such isomorphisms from $k_v$ to $\CC$,
which are~conju\-ga\-te to each other. 

Given a non-Archimedean place $v$ where $\DD$ splits, 
any choice of isomorphism $\DD_v \simeq \Mat_2(k_v)$~in\-duces
a group isomor\-phism $\GG_n(k_v) \simeq \GL_{2n}(k_v)$.
By the Skolem--Noether theorem,~any~two~iso\-mor\-phisms from
$\DD_v$ to $\Mat_2(k_v)$
are $\GL_2(k_v)$-conjuga\-te to each other.
It follows that any two~iso\-morphisms~$\GG_n(k_v) \simeq \GL_{2n}(k_v)$
obtained as above are $\GL_{2n}(k_v)$-conjuga\-te to each other.

Given any irreducible automorphic representation $\Pi$ of $\GG_n(\AA)$, 
there is a decomposition of $\Pi$ into local components $\Pi_v$ for each
place $v$ of $k$.
If $v$ is non-Archimedean,
$\Pi_v$ is an irreducible~re\-presentation of $\GG_n(k_v)$,
well defined up to isomorphism.
If, in addition,
the $k$-algebra $\DD$ splits~at $v$,
then $\Pi_v$ defines an irreducible
re\-presentation of $\GL_{2n}(k_v)$,
well defined up to isomorphism.~If $v$ is~Ar\-chi\-medean,
$\Pi_v$ defines an irreducible Harish-Chandra module of
$\GL_{2n}(\CC)$,
well defined~up to conjugacy. 

\subsection{}\label{sec Haar}

We did not fix choices of Haar measures before since it did not matter much. 
In what~fol\-lows, it will be helpful to do so,
in order to normalize spherical vectors and invariant linear forms,
especially at unramified places of automorphic representations.

Fix an integer $n\>1$
and write $\GG=\GG_n$ and $\HH=\HH_n$ for simplicity. 
For any place $v$ of~$k$,~we will write $G_v=\GG(k_v)$ and $H_v=\HH(k_v)$,
and similarly for any group defined over $k$.

At any place $v$ where $\DD_v$ is split,
we fix once and for all an isomorphism $\DD_v \simeq \Mat_2(k_v)$
of~$k_v$-al\-ge\-bras.
It induces a group isomorphism $G_v\simeq \GL_{2n}(k_v)$.
We will identify these groups
through this isomorphism,
without any further discussion,
whenever convenient.

Fix integers $m,\ad\>1$ such that $m\ad=n$
and let $\PP=\MM\NN$ be the standard parabolic~sub\-group
of $\GG$ associated with the composition $(\ad, \dots,\ad)$.
We will write $P_v=\PP(k_v)$, etc. 

As in \S\ref{DuchesseduMaine},
we set $K_v=\GG(\Oo_v)$ when $v$ is finite,
whereas 
we set $K_v=\mathrm{U}_{2n}(\CC/\RR)$ when it is Archimedean. 
This fixes a {good}
maximal compact subgroup $K$ of $\GG(\AA)$,
defined as the product of the $K_v$ for all $v$.

Given any closed subgroup $X_v$ of $G_v$, 
we define its Haar measure to give volume $1$ to the~in\-tersection $X_v\cap\K_v$.
This also normalizes the right invariant
measures on $(X_v\cap K_v)\backslash X_v$,
and it also normalizes similar global right invariant measures.

\subsection{}

Let~$\mathfrak{z}_{\MM}$~be the centre of the universal enveloping
algebra of the Lie algebra of $\MM(k\otimes_{\QQ}\RR)$.
A complex~function
on $\MM(k)\bsl \MM(\AA)$ is~called an automorphic form
for $\MM(\AA)$ if it is smooth,~of
mo\-derate growth,
right $K\cap\MM(\AA)$-finite~and~$\mathfrak{z}_{\MM}$-finite.
{We refer for example to \cite[Section 2.7]{BPCZ} for~a detailed 
  discussion of automorphic forms in the smooth setting.} 

An automorphic form $f : \MM(k)\bsl \MM(\AA)\to\CC$ is called a
cusp form if,
for any proper parabolic subgroup ${\rm Q}$ of $\MM$
with unipotent radical ${\rm V}$, 
we have
\[
  \int_{{\rm V}(k)\bsl {\rm V}(\AA)} f(vm)\,\rd v = 0, \quad m\in \MM(\AA).
\quad 
\]
Let $\CA_\PP(\GG)$ be the space of right $K$-finite functions
$\h : \NN(\AA)\MM(k)\bsl \GG(\AA)\to\CC$ such that,
for each $\kappa\in K$,
the function $g\mapsto \h(g\kappa)$ on $\MM(\AA)$ is an automorphic
form. 

Given a cuspidal automorphic representation $\Pi$ of $\GG_{\ad}(\AA)$, 
let $\CA_{\PP}^{\Pi}(\GG)$ denote the subspace~of all $\h\in\CA_{\PP}(\GG)$
such that for any $\kappa\in K$,
the function $m\mapsto\h(m\kappa)$ is in the space of
the cus\-pi\-dal automorphic representation
$\Pi\otimes\Pi\otimes\cdots\otimes\Pi$ of $\MM(\AA)$. 

We have the standard intertwining operator
$M(s,w):\CA_\PP(\GG)\to\CA_\PP(\GG)$,
given by the absolu\-tely convergent integral
\[
  M(s, w)\h(g) = e^{\langle s\rho_P, H_P(g)\rangle} \int_{\NN(\AA)}
  \h(w^{-1}ug) e^{\langle s\rho_P, H_P(w^{-1}ug)\rangle}\,\rd u
\]
where
$\rho_P=((m-1)/2, (m-3)/2, \cdots, (1-m)/2)\in\aa_P^\ast$ and $s$ is a
complex number with sufficient\-ly~large real part. 
It has a meromorphic continuation to $\CC$.

\subsection{}

Let $x=[w_m]_{\ad}$ be the representative of the open $\PP(k)$-orbit of
$\GG(k)/\HH(k)$ given by \eqref{duchessedeberry}
and $\eta\in\GG(k)$ be some representative of the open
$(\PP(k),\HH(k))$-double coset of $\GG(k)$ such that 
$x$ is equal to $\eta\s(\eta)^{-1}$
(see Exam\-ple \ref{openorbit}). 
Set $\MM_x = \MM \cap \eta \HH\eta^{-1} = \HH_{\ad}\tdt\HH_{\ad}$.

We define the global open intertwining period by
\begin{equation}\label{global_intertwining_period}
  J(s, \h) = \int_{(\eta^{-1} \PP(\AA)\eta \cap\HH(\AA)) \bsl \HH(\AA)} \left(
    \int_{\MM_x(\CF)\bsl \MM_x(\AA)}
    \h(m\eta h)\,\rd m \right) e^{\langle s\rho_P, H_P(\eta
    h)\rangle}\,\rd h
\end{equation}
for $s\in\CC$ and $\h\in\CA_\PP^{\Pi}(\GG)$.
Before adressing its convergence and meromorphic continuation,~we
  already observe that the inner integral is convergent and factorizable. 
  Namely, from \cite{Ash-Ginzburg-Rallis} Proposition 1, 
the non-zero linear form on $\Pi \otimes \cdots\otimes \Pi$ given by the 
period integral 
\[
p_{M_x}:\phi \mapsto \int_{\MM_x(\CF)\bsl \MM_x(\AA)} \phi(m)\,\rd m
\]
converges absolutely and provides,
for each place $v$,
an $\MM_x(k_v)$-invariant linear form $\li_v$ on~the local component
$\Pi_{v} \otimes \cdots\otimes\Pi_{v} $. Thanks to Theorem \ref{thm:mult1},
there is for each place $v$ a non-zero linear form
$\li_v\in \Hom_{\MM_{x,v}}(\Pi_v,\CC)$
such that if $\phi$ decomposes as $\ph=\bigotimes\limits_v\ph_v$,
then
\begin{equation}\label{JO}
p_{M_x}(\phi)=\prod_v \mu_v(\phi_v).
\end{equation}
Note that for this to make sense, we have fixed a choice of
$\MM_x(\Oo_v)$-spherical vectors $\phi_v$ at all~fi\-nite places $v$ of
$k$ such that $\DD_v$ splits and $\Pi_v$ is unramified,
and normalized the linear forms~$\li_v$ at these places such that 
$\li_v(\phi_v)=1$.
We then naturally normalize the $\GL_{2n}(\Oo_v)$-spherical function $\h_v\in 
\Pi_v\tdt \Pi_v$ by {requiring that its value at the identity element
$1_{2n,v}$ of $\GL_{2n}(k_v)$ is $\phi_v$.}

\begin{prop}
\label{thm:globalFE}
Let $\Pi$ be a cuspidal automorphic representation of
$\GG_{\ad}(\AA)$, and let $\h\in \CA_\PP^{\Pi}(\GG)$.
\begin{enumerate}
\item
The integral \eqref{global_intertwining_period} is absolutely
convergent when $\Re(s)$ is sufficiently large,
and~has~a~me\-ro\-mor\-phic continuation to $\CC$.
\item
We have the global functional equation
\begin{equation}
\label{GFE}
J(s, \h) = J(-s, M(s, w)\h). 
\end{equation} 
\end{enumerate}
\end{prop}

\begin{proof}
First we adress convergence and meromorphic continuation.
We may assume that~$\h$~is decomposable into $\prod_v \h_v$,
and at almost all 
places $v$ of $k$, the function $\h_v$ is the $\GL_{2n}(\Oo_v)$-sphe\-rical 
function normalized as above. 
According to \eqref{JO},~the inner integral in \eqref{global_intertwining_period} 
can~be~fac\-to\-ri\-zed as the well-defined product
\begin{equation*}
\prod_v \mu_v(\varphi_{v,s}(\eta h_v)).
\end{equation*}
Now we observe that for~a place $v$ at which $\DD$ splits,
the local factor $\mu_v(\varphi_{v,s}(\eta h_v))$ identifies with that~for
intertwining periods on~the representation of $\GL_{2n}(\AA)$ induced from
${}^{\rm JL}\Pi\otimes\cdots\otimes{}^{\rm JL}\Pi$ with~res\-pect~to
$\Sp_{2n}(\AA)$.
It thus follows from \cite{Yamana} Proposition 3.1,
that if we fix $S_0$ a
finite set of places of $k$, outside of which $\DD$ is split, then there 
is a $r_0\in \RR$ independent of $\h$ such that, for $\Re(s)>r_0$, 
the quantity
\begin{equation*}
\prod_{v\notin S_0} \mu_v(\varphi_{v,s}(\eta h_v))
\end{equation*}
is integrable on the restricted product of the
$(\eta^{-1} \PP(k_v)\eta \cap\HH(k_v)) \bsl \HH(k_v)$
for $v\notin S_0$
with respect to $(\eta^{-1} \PP(k_v)\eta \cap \HH(k_v) \cap K_v) \bsl \HH(k_v)
\cap K_v$.  
But,~up~to taking $r_0$ larger, the finite product
\begin{equation*}
\prod_{v\in S_0} \mu_v(\varphi_{v,s}(\eta h_v))
\end{equation*}
is integrable on the product of the
$(\eta^{-1} \PP(k_v)\eta \cap\HH(k_v)) \bsl \HH(k_v)$
for $v\in S_0$
thanks to \cite{BD}
and~\cite{Brylinski-Delorme-H-inv-form-MeroExtension-Invention},
and the convergence statement is proved.
The meromorphy then follows from \cite{BD},
\cite{Brylinski-Delorme-H-inv-form-MeroExtension-Invention}
and \cite{Yamana} Theorem 3.5.
This concludes~the proof of Assertion (1).

The functional equation \eqref{GFE} follows from the same argument as that for
intertwining periods of $\GL_{2n}(\AA)$ with respect to $\Sp_{2n}(\AA)$
proved by Offen in \cite{Offen-Sp1} Theorem 7.7,
and we observe that this also gives another proof of the meromorphy of the
global intertwining period.
\end{proof}

\begin{lemm}\label{lem:Pi_0}
Let $\Pi$ be a cuspidal automorphic representation of $\GG_{\ad}(\AA)$
having a non-zero $\HH_{\ad}(\AA)$-period. 
There exists a cuspidal automorphic representation $\Si$ of $\GL_{\ad}(\AA)$
such that ${}^{\rm JL}\Pi$~is equal to $\MW_2(\Si)$.
\end{lemm}

\begin{proof}
According to \S\ref{DSclassifglobal},
there are a positive integer $t$ dividing $2$ and a cuspidal representa\-tion
$\Si$ of $\GL_{2a/t}(\AA)$ such that ${}^{\rm JL}\Pi=\MW_t(\Si)$. 
From \cite{Verma} Theorem 1.3,
we know that~${}^{\rm JL}\Pi$~is distin\-guished by $\Sp_{2n}(\AA)$,
hence \cite{Ash-Ginzburg-Rallis} Theorem
implies that ${}^{\rm JL}\Pi$ is not cuspidal. 
Thus $t=2$.
\end{proof}

\subsection{}
\label{sec:intertwining-split}

From now on
and until the end of Section \ref{sec:global},
we assume that $\Pi$ is a cuspidal automor\-phic~representation
of $\GG_{\ad}(\AA)$ having~a~non-zero $\HH_{\ad}(\AA)$-period.

Fix a finite set $S$ of places~of $k$
{containing the set $S_\infty$ of Archimedean places and}
such that, for all $v\notin S$, 
one has
\begin{enumerate}
\item 
the $k_v$-algebra $\DD_v$ is split, 
\item the character $\psi_v$ has conductor $\Oo_v$ if $v$ is finite, 
\item
the local component $\Pi_v$ is unramified. 
\end{enumerate}
Fix a $\h\in \Pi\times \dots \times \Pi \subseteq \CA_\PP^{\Pi}(\G)$
and assume that it decomposes as a tensor product
\begin{equation}
\label{formautphi}
\h=\bigotimes\limits_v\h_v.
\end{equation}
We further assume that,
for any place $v\not\in S$,
the vector $\h_v$ is $\GL_{2n}(\Oo_v)$-spherical in $\Pi_{v}\tdt\Pi_{v}$
and normalized by
requiring that its value at $1_{2n,v}$ is $\phi_v$,~as~in the
discussion before Proposition \ref{thm:globalFE} above. 

By its very definition,
and up to potentially modifying one of the linear forms $\mu_v$ at
one~pla\-ce~by a non-zero scalar, the global intertwining~pe\-riod 
facto\-rises into the product of local intertwining periods as
{\[
  J(s, \h) = \prod_v J_v(s, \h_{v,s}, \li_v)
\]
where $\h_{v,s}$ is the flat section in
$\Pi_v\nu^{s(m-1)}\tdt\Pi_v\nu^{s(1-m)}$ such that $\h_{v,0}=\h_v$.
(As usual,
$\nu$ denotes the character ``normalized absolute value of the reduced norm''.)

We~set
\[
  J_S(s, \h) = \prod_{v\in S} J_v(s, \h_{v,s}, \li_v),
  \quad
  J^S(s, \h) % = \frac{J(s, \ph)}{J_S(s, \ph)}
  = \prod_{v\notin S} J_v(s, \h_{v,s}, \li_v).
\]
As in \eqref{funct-eq},
for each place $v$,
there exists a meromorphic function $\alpha_v(s)$ satisfying
\begin{equation}\label{eq:alpha_def}
J_v(-s, M_v(s, w)\h_{v,s}, \li_v) = \alpha_v(s)J_v(s, \h_{v,s},\li_v)
\end{equation}
where {$w=w_m$} is the longest element of $W(M_v)\simeq\frS_m$
and $M_v(s, w)$ is the
standard~inter\-twin\-ing~operator
from
$\Pi_v \nu^{s(m-1)}\tdt\Pi_v \nu^{s(1-m)}$
to
$\Pi_v \nu^{s(1-m)}\tdt\Pi_v \nu^{s(m-1)}$. 
From the global functional equation Proposition \ref{thm:globalFE}(2),
we obtain
\begin{eqnarray}
\label{eq:reduction}
\notag
\prod_{v\in S} \alpha_v(s) &=& \frac{J_S(-s, M(s, w)\h)}{J_S(s, \h)} \\
& = & \frac{J(-s, M(s, w)\h)J^S(s, \h)}{J(s, \h)J^S(-s, M(s,
    w)\h)} \\
  \notag
& = & \frac{J^S(s, \h)}{J^S(-s, M(s, w)\h)}. 
\end{eqnarray}
Hence, the functional equations of
$J_S(s,\h)$ and $J^S(s, \h)$
are related by inversion.

\subsection{}
\label{sec:open_period}

Recall that,
according to Lemma \ref{lem:Pi_0},
there is a cuspidal automorphic representation~$\Si$ of 
$\GL_{\ad}(\AA)$ such that ${}^{\rm JL}\Pi=\MW_2( \Si)$. 
Fix a place $v\notin S - S_\infty$, and write $\si=\Si_{v}$.
By our choice of $S$ made at the beginning of \S\ref{sec:intertwining-split},
the representation $\si$~is a~generic irreducible principal series
representation of $\GL_{\ad}(k_v)$,
which is moreover unramified when $v$ is finite.

Since $\DD_v$ is split,
the local components of $\Pi$ and ${}^{\rm JL}\Pi$ at $v$ are~iso\-mor\-phic. 
We thus have
\begin{equation}
\label{eq:other_places}
\Pi_v \simeq \MW_2(\Si)_v \simeq \Sp_2( \si)
\end{equation}
where the notation $\Sp_2(\s)$ has been defined earlier in \S\ref{Morrel}
and the last isomorphism~comes~from \cite{MWres} I.11.
We fix such an isomorphism, and identify $\Pi_v$ and $\Sp_2( \si)$ with no 
further notice, when convenient. 

Let $\Om_{\ad}$ denote the diagonal matrix of $\GL_{\ad}(k_v)$
with diagonal entries $(1,-1,1,\dots,(-1)^{\ad-1})$.~It 
has already been defined by \eqref{defOm} when $\ad$ is even,
and we have
\begin{equation*}
\Om_{2\ad} =
\begin{pmatrix} \Om_{\ad} & \\ & (-1)^{t}\Om_{\ad} \end{pmatrix}.
\end{equation*}
Given any representation $\pi$ of $\GL_{\ad}(k_v)$,
let $\pi^\diamond$~be the representation
$g \mapsto \pi(g^\diamond)$ of $\GL_{\ad}(k_v)$, 
where $g^\diamond = \Om_{\ad} \cdot {}^{\tt}g^{-1} \cdot \Om_{\ad}$
for any $g\in\GL_{\ad}(k_v)$. 
If~$\pi$~is~irre\-ducible,
then $\pi^\diamond$ is isomorphic to $\pi^\vee$
(thanks to \cite{BZ1} Theorem 7.3 if $v$ is non-Archimedean~and
\cite{AG} Theo\-rem 8.2.1 if $v$ is Archimedean).~We observe that the
intersection of $\Sp_{2\ad}(k_v)$ with the standard Levi subgroup
$\GL_{\ad}(k_v) \times \GL_{\ad}(k_v)$ of $\GL_{2\ad}(k_v)$ is equal to
the group $C=\{\diag(g,g^\diamond)\ |\ g\in \GL_{\ad}(k_v)\}$.

The representation $\si\nu^{1/2}\times \si\nu^{-1/2}$
affords a 
closed intertwining period, given by a compact~in\-tegration which~we now 
describe. 
We first need to describe the inducing linear form. 
For~this,~we
fix an isomorphism between $\si$
and the~para\-bolically~in\-du\-ced representa\-tion
$\Ind_{B}^{\GL_{\ad}(k_v)}(\chi)$
for~so\-me character $\chi$ of $(k_v^{\times})^t$, 
where $B$ is the upper triangular Borel subgroup of $\GL_{\ad}(k_v)$.
We~iden\-ti\-fy these two representations with no 
further notice, when convenient. 

By irreducibility of $\si$,
there exists a unique up to scalar isomorphism
\begin{equation}
\label{Gervaise}
M : \Ind_{B}^{\GL_{\ad}(k_v)}(\chi) \simeq \Ind_{B}^{\GL_{\ad}(k_v)}(\chi^{w_t})
\end{equation}
where we recall that $w_{\ad}$ is the longest element of $\frS_{\ad}$.
We then set, for $f_1$ and $f_2$ in $\si$: 
\[
  \ga(f_1\otimes f_2) =
  \int_{B\backslash \GL_{\ad}(k_v)}f_1(g)Mf_2(g^\diamond)dg.
\]
The map $\ga$ is a non-zero $C$-invariant linear form 
on the space of $\si\otimes \si$. 
Note that,
for any com\-plex~numbers $a,b\in \CC$,
the representations 
$\si\otimes\si$ and $\si\nu^{a}\otimes\si\nu^{b}$ 
have the same~under\-lying~space. 
Then,
for $f \in \si\nu^{1/2}\times \si\nu^{-1/2}$,
one can consider the well-defined~in\-te\-gral
\[
  \at (f)=\int_{(R\cap \HH_{\ad}(k_v))\backslash \HH_{\ad}(k_v)}
  \ga(f(h))dh
\]
where $R$ is the standard~parabo\-lic~subgroup of
$\GG_{\ad}(k_v)\simeq\GL_{2\ad}(k_v)$ 
associated with~the~composi\-tion $(\ad,\ad)$.
The map $\at$ is a non-zero $\HH_{\ad}(k_v)$-invariant linear form 
on $\si\nu^{1/2}\times\si\nu^{-1/2}$,
which is the closed~in\-ter\-twining period we were referring to above.
We now fix a quotient map 
\begin{equation*}
\pp : \si \nu^{1/2}\times\si \nu^{-1/2} \to \Sp_2( \si ) = \Pi_{v}.
\end{equation*} 
It fol\-lows from Lemma \ref{lem:factor} that
there is a linear form $\apt$ on $\Pi_v$ such that $\at = \apt \circ \pp$.
For $s\in\CC$, we set 
\begin{eqnarray*}
\ta_s & = & (\si\nu^{1/2}\times \si\nu^{-1/2})\nu^{s(m-1)} \odo
              (\si\nu^{1/2}\times \si\nu^{-1/2})\nu^{s(1-m)}, \\
\tau_s & = & \Pi_v^{\phantom{s}}\nu^{s(m-1)}\odo \Pi_v^{\phantom{s}}\nu^{s(1-m)}.
\end{eqnarray*}
The map $\pp$ induces a quotient map from $\ta_s$ to $\tau_s$ for all $s$, 
inducing the surjection
\[
  \qq_s:\Ind_{P_{v}}^{G_{v}}(\ta_s)\to \Ind_{P_{v}}^{G_{v}}(\tau_s) =
  \Pi_v^{\phantom{s}}\nu^{s(m-1)} \tdt \Pi_v^{\phantom{s}}\nu^{s(1-m)}
\]
where we recall that $P_v=M_vN_v$ is the standard parabolic subgroup of $G_v$
corresponding to the composi\-tion $(2\ad, \ldots,2\ad)$. 
Note that if 
$\widetilde{\varphi}_s$ is a flat section of $\Ind_{P_{v}}^{G_{v}}(\ta_s)$, 
then $\varphi_s=\qq_s(\widetilde{\varphi}_s)$ is a flat section of
$\Ind_{P_{v}}^{G_{v}}(\tau_s)$. 
Finally, write $\ta=\ta_0$ and $\tau = \tau_0$ and set
\begin{equation*}
\bt = \at\odo\at \in \Hom_{\MM_{x}(k_v)} (\ta,\CC), \quad
\be = \apt\odo\apt \in \Hom_{\MM_{x}(k_v)} (\tau,\CC). 
\end{equation*}
We observe that,
up to modifying the isomorphism $M$ of \eqref{Gervaise}
above by a non-zero scalar,~the~li\-near form $\be$ agrees with $\be_v$ in \eqref{JO}.
We now consider the open intertwining periods 
\[
  J_{P_{v}, \ta}(s, \widetilde{\varphi}_s, \bt)
  = \int_{(\eta^{-1}P_{v} \eta\cap H_{v}) \bsl H_{v}}
  \bt (\widetilde{\h}_s(\eta h))\,\rd h,
\]
and
\[
  J_{P_{v}, \tau }(s, \h_s ,\be )
  = \int_{(\eta^{-1}P_{v} \eta\cap H_{v}) \bsl H_{v}}
  \be (\h_s(\eta h))\,\rd h.
\]  
The convergence and meromorphic continuation of these integral are proved in
\cite{BD} and \cite{Brylinski-Delorme-H-inv-form-MeroExtension-Invention},~and 
we refer to \cite{MOY} for further generalization and properties.
By definition, it is immediate that 
\begin{equation}\label{eq:obv}
  J_{P_{v}, \ta}(s,\widetilde{\h}_s, \bt)
  = J_{P_{v}, \tau}(s, \h_s, \li)
\end{equation}
whenever the flat sections $\widetilde{\h}_s$ and $\h_s$ are related 
by $\h_s=\qq_s(\widetilde{\h}_s)$.

Recall that $w=w_m$ is the longest element of $W(M_v) \simeq \frS_m$,
that is,
we have $w(i)=m+1-i$ for $i=1, 2, \ldots, m$. 
Let $w(\ta_s)$ be the representation of $M_v$ defined as
\begin{eqnarray*}
  w(\ta_s) &=& (\si\nu^{1/2}\times \si\nu^{-1/2})\nu^{s(1-m)} \odo
              (\si\nu^{1/2}\times \si\nu^{-1/2})\nu^{s(m-1)} \\ 
&=& (\si\nu^{1/2}\times \si\nu^{-1/2})\nu^{-s(m-1)} \odo
              (\si\nu^{1/2}\times \si\nu^{-1/2})\nu^{-s(1-m)} \\
&=& \ta_{-s}.
\end{eqnarray*} 
Let $M_{P_{v}, \ta}(s, w)$ denote the standard intertwining operator 
\[
  \Ind_{P_{v}}^{G_{v}}(\ta_s) \to \Ind_{P_{v}}^{G_{v}}(w(\ta_s))
  = \Ind_{P_{v}}^{G_{v}}(\ta_{-s}).
\]
In this notation,
\eqref{eq:alpha_def} becomes
\begin{equation}
\label{eq:local FE not explicit}
  J_{P_{v}, \ta}(-s, M_{P_{v}, \ta}(s, w) \widetilde{\varphi}_s, \bt)
  = \a_v(s) J_{P_{v}, \ta}(s, \widetilde{\varphi}_s, \bt).
\end{equation}
Similarly,
let $\QQQ=\LL\VV$ be the standard parabolic subgroup of $\GL_{2n}$
associated with $(\ad, \dots, \ad)$.~We
observe that the $Q_v$-orbit of $x$ is $L_v$-admissible,
although it is not open anymore.
Let $\la$ denote the linear form
\[
  \lambda=\gamma\otimes \dots \otimes
  \gamma\in \Hom_{\LL_x(k_v)}( (\si \otimes \si )
  \otimes \dots \otimes (\si \otimes \si),\CC)
\]
where $\LL_x$ denotes the stabilizer of $x$ in $\LL$.
Note that $\LL_x(k_v)$ is equal to the subgroup
$C\tdt C$ (where $C$ occurs $m$ times).
For $s\in\CC$ and $i=1,\dots,m$,
write $\si_{i,s}=\s\nu^{s(m-2i+1)}$
and
\begin{equation*}
\sie_s=
  \si_{1,s}^{\phantom{1/2}}\nu^{1/2}\otimes\si_{1,s}^{\phantom{1/2}}\nu^{-1/2} 
  \odo
  \si_{m,s}^{\phantom{1/2}}\nu^{1/2}\otimes\si_{m,s}^{\phantom{1/2}}\nu^{-1/2}
\end{equation*}
and $\sie=\sie_0$.
Given a flat section
\[
  f_s\in
  \si_{1,s}^{\phantom{1/2}}\nu^{1/2}\times\si_{1,s}^{\phantom{1/2}}\nu^{-1/2} 
  \tdt
  \si_{m,s}^{\phantom{1/2}}\nu^{1/2}\times\si_{m,s}^{\phantom{1/2}}\nu^{-1/2} 
  = \Ind_{Q_v}^{G_{v}}(\sie_{s})
\]
we consider the local intertwining period,
in the sense of \cite{MOY}, given by the meromorphic~conti\-nuation of the 
integral 
\[
  J_{Q_v, \sie}(s, f_s, \la)
  = \int_{(\eta^{-1}Q_v \eta\cap H_{v}) \bsl H_{v}}
  \la (f_s(\eta h)) e^{\langle -\rho', H_{Q_v}(\eta h)\rangle} \,\rd h,
\]
where $\rho'=(1/2, -1/2, \ldots, 1/2, -1/2)\in\aa_{Q_v}^\ast$.
We observe that this is not an open intertwining~pe\-riod anymore,
however it is well defined,
since the modulus assumption of \cite{MOY},
namely $\delta_{\QQQ_x(k_v)}=\delta_{Q_v}^{1/2}$ on $\LL_x(k_v)$, is satisfied. 
Suppose that $f_s$ and $\widetilde{\varphi}_s$ correspond to each other under
the natural 
isomorphism between the in\-duced representations
$\Ind_{Q_v}^{G_{v}}(\sie_{s})$ and $\Ind_{P_{v}}^{G_{v}}(\ta_s)$. 
Then we have
\begin{equation}\label{eq:compattrans}
  J_{Q_v, \sie}(s, f_s, \l) = J_{P_{v}, \ta}(s, \widetilde{\h}_s, \bt)
\end{equation}
by \cite{LuMat} Proposition 3.7, or rather its proof, which requires only the 
unimodularity of the vertices involved. 

Let $\widetilde{w}$ denote the element of $W(L_v) \simeq \frS_{2m}$ given by 
\[
\widetilde{w}(k) =
        \begin{cases}
         2(m+1-i) & \text{if $k=2i$ is even},  \\
        2(m+1-i)-1 & \text{if $k=2i-1$ is odd}
      \end{cases}
\] 
for $k=1, 2, \dots, 2m$. 
Let $\widetilde{w}(\sie_s)$ be the representation of $L$ defined as 
\begin{eqnarray*}
  \widetilde{w}(\sie_s) &=& 
    \si_{m,s}^{\phantom{1/2}}\nu^{1/2}\otimes\si_{m,s}^{\phantom{1/2}}\nu^{-1/2}
                            \odo
                            \si_{1,s}^{\phantom{1/2}}\nu^{1/2}\otimes\si_{1,s}^{\phantom{1/2}}\nu^{-1/2} \\
   & = & \si_{1,-s}^{\phantom{1/2}}\nu^{1/2}\otimes\si_{1,-s}^{\phantom{1/2}}\nu^{-1/2} \otimes 
    \dots                                                              \otimes
         \si_{m,-s}^{\phantom{1/2}}\nu^{1/2}\otimes\si_{m,-s}^{\phantom{1/2}}\nu^{-1/2}
  \\ 
&=& \sie_{-s}.
\end{eqnarray*}
Let $M_{Q_v, \sie}(s, \widetilde{w})$ denote the standard intertwining operator 
    \[
    \Ind_{Q_v}^{G_{v}}(\sie_{s}) \to \Ind_{Q_v}^{G_{v}}(\widetilde{w}(\sie_s)) 
    = \Ind_{Q_v}^{G_{v}}(\sie_{-s}). 
    \]
Then the functional equation \eqref{eq:local FE not explicit} can be rewritten as
    \begin{equation} \label{eq:local FE not explicit 2} 
    J_{Q_v, \sie}(-s,  M_{Q_v, \sie}(s,  \widetilde{w}) f_s,  \lambda)
    = \alpha_v(s) J_{Q_v,  \sie}(s,  f_s,  \lambda)
    \end{equation}
for any flat section $f_s$ as above.     

\subsection{}
\label{sec:closed_period}

Let  $w'$ and $w''$ be the elements of $W(L)\simeq\frS_{2m}$ defined by 
    \[
    w'(i) = 
        \begin{cases}
        2m+1-i/2 & \text{if $i$ is even,} \\
        (i+1)/2 & \text{if $i$ is odd},  
        \end{cases}
    \quad
    w''(i) = 
        \begin{cases}
        m+i/2 & \text{if $i$ is even,} \\
        m+1-(i+1)/2 & \text{if $i$ is odd,}
        \end{cases}
    \]
for $i=1, 2, \ldots, 2m$, so that we have $w''\widetilde{w}=w'$. 
We define a linear form $\la'$ on the representation
    \[
    w'(\sie_s) = (\si_{1,s}^{\phantom{1/2}}\nu^{1/2}\otimes \dots \otimes \si_{m,s}^{\phantom{1/2}}\nu^{1/2}) 
    \otimes (\si_{m,s}^{\phantom{1/2}}\nu^{-1/2} \otimes \dots \otimes \si_{1,s}^{\phantom{1/2}}\nu^{-1/2}) .
    \] 
of $L$ by 
    \[
    \la'(x_1\otimes \dots x_m\otimes y_m\otimes \dots \otimes y_1)
    = \prod_{i=1}^m\la_i(x_i\otimes y_i)
    \] 
for $x_i\in\si_{i,  s}\nu^{1/2}$ and $y_i\in\si_{i,s}\nu^{-1/2}$.
Given a flat section 
    \[
    f'_s \in \Ind_{Q_v}^{G_{v}}(w'(\sie_s))
     = (\si_{1,s}^{\phantom{1/2}}\nu^{1/2}\times \dots \times \si_{m,s}^{\phantom{1/2}}\nu^{1/2}) 
    \times (\si_{m,s}^{\phantom{1/2}}\nu^{-1/2} \times \dots \times \si_{1,s}^{\phantom{1/2}}\nu^{-1/2})
  \]
we have the closed intertwining period
  \[
    J'_{Q_v, \sie}(s,  f'_s,  \la')  
    = \int_{(Q_v \cap H_{v}) \backslash H_{v}}\la'(f'_s(h))dh.
    \]  
Let $M_{Q_v,\sie}(s, w')$ denote the standard intertwining operator from 
$\Ind_{Q_v}^{G_{v}}(\sie_{s})$ to $\Ind_{Q_v}^{G_{v}}(w'(\sie_s))$.~As
before,
suppose that the flat sections $f_s$ and $\h_s$ 
correspond to each other under the natural~iso\-morphism
$\Ind_{Q_v}^{G_{v}}(\sie_{s}) \simeq \Ind_{P_{v}}^{G_{v}}(\ta_s)$. 
Then we have 
    \begin{align}\label{eq:open vs closed LIP} 
    \begin{split}
    J_{P_{v},  \ta}(s,  \widetilde{\varphi}_s,  \bt) & = J_{Q_v,  \sie}(s,  f_s, \lambda)  \\
    & = J'_{Q_v, \sie}(s, M_{Q_v, \sie}(s, w') f_s, \lambda').
    \end{split}
    \end{align} 
The first equality is \eqref{eq:compattrans} and
the second equality is verified by repeated use of \cite{MOY}~Proposi\-tion~5.1 
along an appropriate reduced expression of $w'\in \frS_{2m}$. 
Consider flat sections $f_s^-$,~$\widetilde{\varphi}{}_s^-$~in 
$\Ind_{Q_v}^{G_{v}}(\sie_{-s})$ and 
$\Ind_{P_{v}}^{G_{v}}(\tau_{-s})$
respectively, 
corresponding to each other under the natural isomor\-phism
$\Ind_{Q_v}^{G_{v}}(\sie_{-s}) \simeq \Ind_{P_{v}}^{G_{v}}(\ta_{-s})$.
Let $M_{Q_v, \widetilde{w}(\sie)}(s, w'')$ be the standard
intertwining~ope\-ra\-tor
\[
  \Ind_{Q_v}^{G_{v}}(\sie_{-s}) = \Ind_{Q_v}^{G_{v}}(\widetilde{w}(\sie_s))
  \to \Ind_{Q_v}^{G_{v}}(w''\widetilde{w}(\sie_s))
  = \Ind_{Q_v}^{G_v}(w'(\sie_s)).
\] 
From Equation \eqref{eq:compattrans},
and applying \cite{MOY} Proposition 5.1 
repeatedly along an appropriate~re\-du\-ced expression of $w''\in \frS_{2m}$,
we obtain 
    \begin{align}\label{eq:open vs closed LIP 2} 
    \begin{split}
    J_{P_{v},  \ta}(-s,  \widetilde{\varphi}_s^-, \bt)
    & = J_{Q_v,  \sie}(-s,  f_{s}^-,  \lambda) \\
    & = J'_{Q_v, \sie}(s, M_{Q_v, \widetilde{w}(\sie)}(s, w'') f_s^-, \lambda').
     \end{split}
    \end{align} 
In the rest of this section, we write
\begin{equation*}
L_\si(s) = L(s, \si, \si^\vee), \quad
\e_\si(s) = \e(s, \si, \si^\vee,\psi_v), \quad
\g_\si(s) = \g(s, \si, \si^\vee,\psi_v).
\end{equation*} 
Following \cite{MWres} I.1,
let us introduce the normalized intertwining operators $N_{\sie}(s, w')$,
$N_{\widetilde{w}(\sie)}(s, w'')$ and $N_{\sie}(s,\widetilde{w})$ defined as
\begin{eqnarray*}
  N_{\sie}(s, w') &=& r_{\sie}(s, w')^{-1}M_{Q_v, \sie}(s, w'), \\
  N_{\widetilde{w}(\sie)}(s,  w'')  &  =&  r_{\widetilde{w}(\sie)}(s,  w'')^{-1}
                                         M_{Q_v, \widetilde{w}(\sie)}(s, w''), \\
  N_{\sie}(s, \widetilde{w}) &=& r_{\sie}(s, \widetilde{w})^{-1}M_{Q_v, \sie}(s, 
                                \widetilde{w}),
\end{eqnarray*} 
where
$r_{\sie}(s, w')$,
$r_{\widetilde{w}(\sie)}(s, w'')$ and
$r_{\sie}(s,\widetilde{w})$ are meromorphic functions given by
\begin{eqnarray*}
  r_{\sie}(s, w') & = & \prod_{1\leq i<j \leq m} \frac{L_\si(2(j-i)s-1)}
                       {\varepsilon_\si(2(j-i)s-1)\varepsilon_\si(2(j-i)s)L_\si(2(j-i)s+1)}, \\
  r_{\widetilde{w}(\sie)}(s, w'') & = & \prod_{1\leq i<j \leq m} \frac{L_\si(2(i-j)s-1)}
      {\varepsilon_\si(2(i-j)s-1)\varepsilon_\si(2(i-j)s)L_\si(2(j-i)s+1)},
\end{eqnarray*}
and
\begin{align*}
  r_{\sie}(s, \widetilde{w}) = \prod_{1\leq i<j \leq m}
                              \varepsilon_\si(2(j-i)s)^{-1} \varepsilon_\si(2(j-i)s&+1)^{-1} 
\varepsilon_\si(2(j-i)s-1)^{-1} \varepsilon_\si(2(j-i)s)^{-1} \\ 
    & \times \prod_{1\leq i<j \leq m} \frac{L_\si(2(j-i)s-1)L_\si(2(j-i)s)}
      {L_\si(2(j-i)s+2)L_\si(2(j-i)s+1)}.
\end{align*}    
From \cite{MWres} I.1 we have
\begin{equation*}
N_{\widetilde{w}(\sie)}(s, w'') \circ { N_{\sie} (s, \widetilde{w}) } = N_{\sie}(s, w').
\end{equation*}
This is equivalent to
\begin{equation}\label{eq:intertw_op_FE}
    M_{Q_v,  \widetilde{w}(\sie)}(s,  w'')\circ M_{Q_v, \sie}(s,  \widetilde{w}) 
    = \kappa_v(s) M_{Q_v,  \sie}(s,  w'),
    \end{equation}
where we set
$\kappa_v(s) = r_{\widetilde{w}(\sie)}(s, w'')\cdot
r_{\sie}(s, \widetilde{w}) \cdot r_{\sie}(s, w')^{-1}$,
that is 
\begin{align*}\label{eq:kappa}
\notag
  \kappa_v(s) 
     = \prod_{1\leq i < j\leq m}   
    \varepsilon_\si(2(j-i)s)^{-1} \varepsilon_\si(2(j-i)s&+1)^{-1}
    \varepsilon_\si(2(i-j)s-1)^{-1} \varepsilon_\si(2(i-j)s)^{-1} \\ 
    & \times \prod_{1\leq i < j\leq m}
    \frac{L_\si(2(j-i)s)L_\si(2(i-j)s-1)} {L_\si(2(j-i)s+2)L_\si(2(i-j)s+1)}. 
    \end{align*}
    From \eqref{eq:intertw_op_FE},
    \eqref{eq:open vs closed LIP 2} and \eqref{eq:open vs closed LIP}, we obtain
\begin{eqnarray*}
    J_{Q_v,  \sie}(-s,  M_{Q_v, \sie}(s,  \widetilde{w}) f_s,  \lambda)
    & = & J'_{Q_v, \sie}(s, M_{Q_v, \widetilde{w}(\sie)}(s, w'') 
    \circ M_{Q_v, \sie}(s, \widetilde{w}) f_s, \lambda') \\ 
    & = & \kappa_v(s) J'_{Q_v, \sie}(s, M_{Q_v, \sie}(s, w')f_s, \lambda') \\ 
    & = & \kappa_v(s) J_{Q_v, \sie}(s, f_s, \lambda). 
    \end{eqnarray*}
Comparing with the functional equation \eqref{eq:local FE not explicit 2},
we deduce that $\alpha_v(s)=\kappa_v(s)$. 
We state~this re\-sult as a proposition.

\begin{prop}
\label{split_computation} 
Let
\begin{equation*}
  M_{Q_v, \sie}(s, \widetilde{w}) : \Ind_{Q_v}^{G_{v}}(\sie_{s})
  \to \Ind_{Q_v}^{G_{v}}(\sie_{-s})
\end{equation*}
denote the standard intertwining operator.
For any flat section $f_s\in \Ind_{Q_v}^{G_{v}}(\sie_{s})$,
we have the~func\-tional equation of local intertwining period
\begin{equation*}
  J_{Q_v, \sie}(-s, M_{Q_v, \sie}(s, \widetilde{w}) f_s, \la)
  = \alpha_v(s) J_{Q_v, \sie}(s, f_s, \la)
\end{equation*}
with  
\begin{align}\label{eq:alpha}
  \notag
  \a_v(s) 
     = \prod_{1\leq i < j\leq m}   
    \varepsilon_\si(2(j-i)s)^{-1} \varepsilon_\si(2(j-i)s&+1)^{-1}
    \varepsilon_\si(2(i-j)s-1)^{-1} \varepsilon_\si(2(i-j)s)^{-1} \\ 
    & \times \prod_{1\leq i < j\leq m}
    \frac{L_\si(2(j-i)s)L_\si(2(i-j)s-1)} {L_\si(2(j-i)s+2)L_\si(2(i-j)s+1)}. 
\end{align}
\end{prop}

If $\si$ and $\psi_v$ are moreover unramified,
which happens if $v$ is finite, 
then $\e_\si(s)=1$ and~\eqref{eq:alpha} sim\-plifies to
\begin{equation*}
\label{numero34}
\a_v(s) = \prod_{1\< i < j\< m}\frac{\g_{\si}(2(j-i)s+2)} {\g_{\si}(2(j-i)s)}.
\end{equation*}
This observation also follows from the unramified computation of intertwining 
periods below,~to\-gether with the Gindikin--Karpelevich formula. 

\begin{prop}
\label{prop: UR comp}
Suppose that $v$ is finite,
that $\si$ is unramified, and that $\h\in \Pi_{v}\tdt \Pi_{v}$ is the
spherical vector such that 
$\mu(\h(1_{2n,v}))=1$. 
Then the following equality holds good:
\[
 J_{P_{v}, \mu}(s, \varphi_s, \be)=
  \prod_{1\< i < j\< m}\frac{L_\si(2(j-i)s-1)}{L_\si(2(j-i)s+1)}.
\]
\end{prop}

\begin{proof}
Putting Equations \eqref{eq:obv} and \eqref{eq:open vs closed LIP} together,
we obtain
\begin{equation*}
 J_{P_{v}, \tau}(s, \varphi_s, \be)=J'_{Q_v, \sie}(s, M_{Q_v,\sie}(s,w') f_s, \l'),
\end{equation*}
where $\varphi_s$ and $f_s$ are any flat sections related as 
in the above discussion.  
If $\h$ is $\GL_{2n}(\Oo_v)$-sphe\-ri\-cal~and normalized as in the statement, 
which is as in the discussion before Theorem~\ref{thm:globalFE}, ~then 
$\widetilde{\h}$ can be chosen to be $\GL_{2n}(\Oo_v)$-spherical,
and $\widetilde{\mu}(\widetilde{\h}(1_{2n,v}))=\mu(\h(1_{2n,v}))=1$. 
In turn this~implies that the function
$f$ is $\GL_{2n}(\Oo_v)$-spherical,
and that one has $\l(f(1_{2n,v}))=1$.
The computation for 
$J'_{Q_v, \sie}(s, M_{Q_v, \sie}(s, w') f_s, \lambda')$
now follows from the 
Gindikin--Karpelevich formula in the form recalled in \cite{MatJFA} Lemma 9.1.
\end{proof}

\section{Conclusion}
\label{sec:final}

We now deduce Proposition \ref{redite} from the computations in 
Section \ref{sec:global}.
Thanks to \S\ref{leber},
this will end the proof of Theorem \ref{MAINTHM2}.

Recall that
$F$ is~a~non-Archimedean locally compact field of characteristic $0$,
and $D$ is a~non-split qua\-ter\-nion $F$-algebra.
We also have a discrete series representation $\pi$ of
$G=\GL_n(D)$~dis\-tinguished by $H=\Sp_n(D)$.
It is of the form $\St_m(\rho)$ for some divisor $m$ of $n$ and some 
cuspidal representation $\rho$ of $\GL_{\ad}( D)$ distinguished by
$\Sp_{\ad}(D)$, with $\ad=n/m$.

{For any unramified character $\chi$ of $G$,
  the representation $\pi\chi$ is a discrete series representation
  of~$G$ dis\-tinguished by $H$,
which is cuspidal if and only if $\pi$ is cuspidal.
Without loss of generality,
we thus may (and will) assume that $\pi$,
or equivalently $\rho$, 
is unitary.}

Let us fix 
\begin{itemize}
\item
  a totally imaginary number field $\CF$ with a finite place $\pl$ 
  so that $\CF_{\pl}=F$ and $\pl$ is the only place of $\CF$ above $p$, 
\item
a quaternion division algebra $\DD$ over $\CF$ which is non-split at $\pl$.
\end{itemize}
By Lemma \ref{lem:globalize}, 
there exists
an irreducible cuspidal automorphic representation $\Pi$ of
$\GL_{\ad}(\DD\otimes_k\AA)$ such that $\Pi_{\pl}\simeq\rho$ and 
$\Pi$ is $\Sp_{\ad}(\DD\otimes_k{\AA})$-distinguished,
that is, there is an $f\in\Pi$ such that
\begin{equation*}
\int_{\Sp_{\ad}(\DD)\bsl\Sp_{\ad}(\DD\otimes_k \AA)} f(h)\,\rd h\neq0.
\end{equation*} 
By Lemma \ref{lem:Pi_0},
there is a cuspidal automorphic~re\-presentation $\Si$ of
$\GL_{\ad}(\AA)$ such that ${}^{\rm JL}\Pi$ is equal to $\MW_2(\Si)$. 
Let $S$ be the finite set of places of $\CF$ 
consisting of $\pl$, the Archimedean places of $\CF$
and~all~fi\-ni\-te places $v$ such that $\Si_{v}$ is ramified. 
Let $S_{\infty}$ be the set of Archimedean places~of $k$. 
The~next~pro\-po\-sition is a consequence of
Propositions \ref{global-RS}, \ref{split_computation}, \ref{prop: UR comp}
and \cite{MatJFA} Lemma 9.1. 

\begin{prop}
\label{prop:ratio}
Let $\h\in \Pi\tdt\Pi$ be the automorphic form fixed in 
\eqref{formautphi} where,
for any place $v\not\in S$,
the vector $\h_v$ is the unique $\GL_{2n}(\Oo_v)$-spherical
function in $\Pi_{v}\tdt\Pi_{v}$ given by Proposition \ref{prop: UR comp}.
We have
\[
  J^S(s, \h) = \prod_{1\leq i<j \leq m}
  \frac{L^S(2(j-i)s-1, \Si, \Si^\vee)} {L^S(2(j-i)s+1, \Si, \Si^\vee)}
\]
and hence
\[
  \frac{J^S(-s, M(s, w)\h)}{J^S(s, \h)}
  = \prod_{v\in S}\prod_{1\leq i<j\leq m}
  \frac{\g(2(j-i)s, \Si_{v}^{\phantom{\vee}}, \Si_{v}^{\vee},\psi_v^{\phantom{1}})}
  {\g(2(j-i)s+2, \Si_{v}^{\phantom{\vee}}, \Si_{v}^{\vee},\psi_v^{\phantom{1}})}.
\]
\end{prop}

{For $f$ and $g$ two functions of a complex variable $s$,
  write $f(s)\sim g(s)$ if there exists a $c\in \CC^\times$ such that
  $g(s)=c f(s)$.}
 
\begin{theo}\label{thm:alpha} 
With the above notation, we have
\[
  \a(s, \rho) \sim \prod_{1\leq i<j\leq m}
  \frac{\g(2(j-i)s+2, \Si_{\pl}^{\phantom{\vee}}, \Si_{\pl}^\vee,\psi_{\pl}^{\phantom{1}})}
  {\g(2(j-i)s, \Si_{\pl}^{\phantom{\vee}}, \Si_{\pl}^\vee,\psi_{\pl}^{\phantom{1}})}. 
\]
\end{theo}

\begin{proof}
Recall from \eqref{eq:reduction} that we have
\[
  \frac{J^S(s, \h)}{J^S(-s, M(s, w)\h)} = \prod_{s\in S} \alpha_v(s).
\]
By Proposition \ref{prop:ratio}, we obtain
\[
  \prod_{v\in S} \alpha_v(s) = \prod_{v\in S}\prod_{1\leq i<j\leq m}
  \frac{\g(2(j-i)s+2, \Si_{v}^{\phantom{\vee}}, \Si_{v}^{\vee}, \psi_v^{\phantom{1}})}
  {\g(2(j-i)s, \Si_{v}^{\phantom{\vee}}, \Si_{v}^{\vee}, \psi_v^{\phantom{1}})}.
\]
Since Archimedean root numbers are constant
(see \cite{Jrsarch} 16 Appendix), it follows from Proposi\-tion~\ref{split_computation}
for Archimedean places that 
we {can simplify the above identity to}
\[
  \prod_{v\in S\bsl S_{\infty}} \alpha_v(s)\sim \prod_{v\in S\bsl S_{\infty}}\prod_{1\leq i<j\leq m}
  \frac{\g(2(j-i)s+2, \Si_{v}^{\phantom{\vee}}, \Si_{v}^{\vee}, \psi_v^{\phantom{1}})}
  {\g(2(j-i)s, \Si_{v}^{\phantom{\vee}}, \Si_{v}^{\vee}, \psi_v^{\phantom{1}})}.
\]
Since $\pl$ is the only place above $p$ and
$\a(s, \rho)=\a_{\pl}(s)$, the assertion now follows from
\cite{MatJFA} Lemma 9.3.
\end{proof}

Now we obtain the next proposition as promised in \S\ref{sec:proof}.

\begin{prop}\label{cor:gamma}
The meromorphic function $\a(s,\rho)$ is holomorphic and non-zero at $s=1$. 
\end{prop}

\begin{proof}
Note that $\Si_{\pl}$ is cuspidal and $\JL(\rho)=\St_2(\Si_{\pl})$.
{From the properties of $L$ factors~re\-cal\-led in Section \ref{pole}
  and central character considerations, 
  $\g(s,\Si_{\pl},\Si_{\pl}^\vee,\psi_{\pl})$ and~its inverse can only vanish for
  $s$ of real part equal to $0$ or $1$. It then follows from Theorem
  \ref{thm:alpha}~that each quotient of $\g$-factors in the formula for
  $\a(s,\rho)$ is holomorphic and non-vanishing at $s=1$.} 
\end{proof}

\section{Appendix}
\label{app}

In this section, $A$ is as in Section \ref{sensdirect}.
Let $[\aa,\b]$ be a simple stratum in $\A$.
Let $\psi^A$ denote the character $x \mapsto \psi(\trd_{A/F}(x))$
of $A$, where $\trd_{A/F}$ is the reduced trace of $A$ over $F$.

We prove Lemma \ref{applem}
(which has been used in Section \ref{Sec2}),
whose proof was postponed to this last section
since it requires techniques which are not used anywhere else
in the paper.

\begin{lemm}
\label{applem}
Let $\t\in\Cc(\aa,\b)$ be a simple character.
Then
\begin{enumerate}
\item $[\aa^*,\b^*]$ is a simple stratum realizing $\b$, and
\item $\t^*$ is a simple character in $\Cc(\aa^*,\b^*)$. 
\end{enumerate}
\end{lemm}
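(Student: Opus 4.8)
The lemma is about the behaviour of the anti-involution $a\mapsto a^*={}^{\tt}\overline a$ on simple strata and simple characters. The natural strategy is to factor $*$ as a composition of two operations whose effect on $[\aa,\b]$ and on $\Cc(\aa,\b)$ is already understood: the entrywise conjugation $a\mapsto\overline a$ coming from \eqref{antiinvD} on $D$, and the antitranspose $a\mapsto{}^{\tt}a$. First I would observe that, since $*$ is an anti-automorphism of the $F$-algebra $A$ fixing $F$ pointwise, it sends the hereditary order $\aa$ to the hereditary order $\aa^*$, the element $\b$ to $\b^*$, and the field $E=F[\b]$ isomorphically onto $F[\b^*]=E^*$; moreover $E^\times$ normalizes $\aa$ if and only if $(E^*)^\times$ normalizes $\aa^*$, and the extra technical condition defining simple strata (the condition $k_0(\b,\aa)<0$, or its analogue) is preserved because it is phrased purely in terms of the filtrations of $\aa$ and the valuation of $\b$, all of which are carried over by the anti-automorphism. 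This gives part (1): $[\aa^*,\b^*]$ is a simple stratum, and it realizes $\b$ in the sense that $F[\b^*]\cong F[\b]$ as $F$-algebras via $*$.

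For part (2), the heart of the matter is the compatibility of the sets $\Cc(\aa,\b)$ of simple characters with the anti-involution. Here I would use the explicit description of simple characters in terms of the character $\psi^A$ of $A$ and the filtration subgroups $H^1(\aa,\b)$, and the fact that $\trd_{A/F}(a^*)=\trd_{A/F}(a)$ for all $a$ (because the reduced trace is invariant under anti-automorphisms fixing the centre — this follows by extension of scalars to a splitting field, exactly as in the proof of Proposition \ref{prop:res*}). Consequently $\psi^A\circ * = \psi^A$, and one checks inductively — along the defining tower of simple strata $[\aa,\b]=[\aa,\b_r],[\aa,\b_{r-1}],\dots$ approximating $\b$ — that applying $*$ carries the group $H^1(\aa,\b)$ to $H^1(\aa^*,\b^*)$ and the generating set of characters of $\Cc(\aa,\b)$ (built from $\psi_\b$ and, recursively, from simple characters attached to the smaller strata, restricted along the centralizers) to the corresponding generating set for $\Cc(\aa^*,\b^*)$. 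Since $\t^*=\t^{-1}\circ\s=\t^{-1}\circ *$ on $H^1(\aa^*,\b^*)$ — using that on a pro-$p$-group $\s$ and $x\mapsto x^*$ differ by inversion — and inversion permutes $\Cc$ within a given stratum pair (the complex conjugate of a simple character is again simple, as $\psi^A$ is unitary), we conclude $\t^*\in\Cc(\aa^*,\b^*)$.

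\textbf{The main obstacle.}
The genuinely delicate point is the inductive verification that $*$ intertwines the \emph{recursive} construction of $\Cc(\aa,\b)$ — this construction involves choosing an approximating simple stratum $[\aa,\g]$ with $\g$ a tame corestriction away from $\b$, forming the centralizer $B_\g$ of $F[\g]$, and transferring simple characters from a smaller matrix algebra. One must check that $*$ restricts compatibly to each such centralizer (which is where Proposition \ref{prop:res*}, or rather its general avatar, is used), commutes with the chosen tame corestrictions up to the sign dictated by the anti-involution, and respects the transfer maps. This is a bookkeeping argument of the kind developed in \cite{BK} and adapted to the anti-automorphism setting; it is routine in spirit but technically involved, which is exactly why — as the parenthetical remark in Section \ref{Sec2} indicates — its proof is isolated in this appendix rather than carried out inline. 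Once this compatibility is established, both assertions of the lemma follow immediately.
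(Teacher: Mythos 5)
Your strategy is the right one and, for part (1), matches the paper's almost exactly: $*$ is an $F$-linear anti-automorphism of $A$, so its restriction to $E=F[\b]$ is an $F$-algebra embedding $E\hookrightarrow A$ with image $E^*=F[\b^*]$, the hereditary order $\aa^*$ is normalized by $E^{*\times}$, and $k_0(\aa^*,\b^*)=k_0(\aa,\b)$, giving (1). The general avatar of Proposition~\ref{prop:res*} that you gesture at is indeed what is needed, and in the paper it takes the form of Lemma~\ref{nrdstar}: $\Nrd_{B/E}(x^*)=\Nrd_{B^*/E^*}(x)^*$. (Note that here $B^*\neq B$ in general, since the stratum is not assumed self-dual, so Proposition~\ref{prop:res*} itself does not apply; the relevant statement is the one relating the two different reduced norms.)

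The real problem is with part (2): your ``main obstacle'' paragraph identifies exactly the content of the lemma, and then declines to carry it out. Saying the inductive verification is ``routine in spirit but technically involved'' and ``once this compatibility is established, both assertions follow immediately'' is circular --- that verification \emph{is} the proof. What the paper does, and what your proposal does not, is run the induction on $q=-k_0(\aa,\b)$ explicitly, with $r=\lfloor q/2\rfloor+1$: in the minimal case it uses that $H^1(\aa,\b)=U^1(\bb)U^r(\aa)$, that $\t|_{U^r(\aa)}=\psi^A_\b$ and $\t|_{U^1(\bb)}=\xi\circ\Nrd_{B/E}$, and verifies directly that $\t^*|_{U^r(\aa^*)}=\psi^A_{\b^*}$ (because $\psi^A\circ*=\psi^A$) and $\t^*|_{U^1(\bb^*)}=\xi^*\circ\Nrd_{B^*/E^*}$ (Lemma~\ref{nrdstar}), then appeals to the defining characterization of $\Cc(\aa^*,\b^*)$ (\cite{VSrep1} Proposition~3.47). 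In the non-minimal case it chooses an approximation $\g$ of $\b$, uses $H^1(\aa,\b)=U^1(\bb)H^r(\aa,\g)$ and $\t|_{H^r(\aa,\g)}=\psi^A_{\b-\g}\t'$ with $\t'\in\Cc(\aa,\g)$, observes $\g^*$ approximates $\b^*$ with respect to $\aa^*$, and invokes the inductive hypothesis for $\t'^*$. None of this is in your write-up. You need to supply it; it is not a bookkeeping afterthought.

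One smaller slip: you write ``$\t^*=\t^{-1}\circ\s=\t^{-1}\circ*$,'' but from $\s(x)=(x^*)^{-1}$ and the fact that $\t$ is a homomorphism one gets $\t^*(x)=\t^{-1}((x^*)^{-1})=\t(x^*)$, i.e.\ $\t^*=\t\circ*$, not $\t^{-1}\circ*$. No pro-$p$ hypothesis and no appeal to complex conjugation of simple characters is needed here; the detour through ``inversion permutes $\Cc$'' is compensating for this algebra error and should be removed.
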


\begin{proof}
If $[\aa,\b]$ is the null stratum, there is nothing to prove. 
We will thus assume that $[\aa,\b]$ has positive depth.

The map $\ii : x \mapsto x^*$ is an $F$-linear involution
of $A$ such that $\ii(xy)=\ii(y)\ii(x)$.
Restricting~to the commutative $F$-algebra $E=F[\b]$,
it is thus an embedding of $F$-algebras from $E$ to $A$.
This proves (1).
Note that,
if $B$ is the centralizer of $E$ in $A$,
then the centralizer of $E^*$ in $A$ is $B^*$. 

\begin{lemm}
\label{nrdstar}
One has $\Nrd_{B/E}(x^*) = \Nrd_{B^*/E^*}(x)^*$ for all $x\in B^*$.
\end{lemm}

\begin{proof}
The proof is similar to \cite{VSautodual} Lemme 5.15.
\end{proof}

We will prove (2) by induction on the integer $q=-k_0(\aa,\b)$.
(See \cite{VSrep1} \S2.1 for the definition of $k_0(\aa,\b)$.)
Define $r=\lfloor q/2\rfloor+1$.
First note that $k_0(\aa^*,\b^*) = k_0(\aa,\b)$ and 
$\t^*$ is normalized by $\EuScript{K}(\aa^*)\cap\B^{*\times}$
as $\t$~is~nor\-ma\-lized by $\EuScript{K}(\aa)\cap\B^\times$. 
{(Here,~$\EuScript{K}(\aa)$ denotes the normalizer in~$G$ of the order~$\aa$.)}
For any integer $i\>1$,
let us write $\U^i(\aa)=1+\p^i_\aa$.

Assume first that $\b$ is minimal over $F$ (see \cite{VSrep1} \S2.3.3).
In this case, we have
\begin{itemize}
\item $\H^1(\aa,\b) = \U^1(\bb)\U^r(\aa)$, 
\item the restriction of $\t$ to $\U^r(\aa)$ is the character
$\psi^A_\b : 1+x \mapsto \psi^A(\b x)$, 
\item the restriction of $\t$ to $\U^1(\bb)$ is equal to
$\xi \circ \Nrd_{B/E}$ for some character $\xi$ of $1+\p_E$.
\end{itemize}
The character $\t^*$ is defined on the group 
$\s(\H^1(\aa,\b)) = \U^1(\bb^*)\U^r(\aa^*) = \H^1(\aa^*,\b^*)$.
Its restriction to $\U^r(\aa^*)$ is the character
\begin{equation*}
1+y \mapsto \psi^A(\b y^*)= \psi^A(\b^* y) = \psi^A_{\b^*}(1+y)
\end{equation*}
since $\psi^{A}$ is invariant by $*$.
By Lemma \ref{nrdstar},
its restriction to $\U^1(\bb^*)$ is $\xi^* \circ \Nrd_{B^*/E^*}$ 
where~$\xi^*$ is the character $x \mapsto \xi(x^*)$ of $1+\p_{E^*}$.
It follows from \cite{VSrep1} Proposition 3.47 that $\t^*$ is a
simple cha\-racter~in $\Cc(\aa^*,\b^*)$.

Now assume that $\b$ is not minimal over $F$,
and that $\g$ is an approximation of $\b$ with respect to $\aa$
(see \cite{VSrep1} \S2.1).
We have
\begin{itemize}
\item $\H^1(\aa,\b) = \U^1(\bb) \H^r(\aa,\g)$, 
\item the restriction of $\t$ to $\H^r(\aa,\g)$ is equal to 
  $\psi^A_{\b-\g}\t'$ for some simple character $\t' \in \Cc(\aa,\g)$, 
\item the restriction of $\t$ to $\U^1(\bb)$ is equal to
  $\xi \circ \Nrd_{B/E}$ for some character $\xi$ of $1+\p_E$.
\end{itemize}
The character $\t^*$ is defined on the group 
\begin{equation*}
\s(\H^1(\aa,\b)) = \U^1(\bb^*) \s(\H^1(\aa,\g))
= \U^1(\bb^*) \H^1(\aa^*,\g^*)
= \H^1(\aa^*,\b^*)
\end{equation*}
since $\g^*$ is an approximation of $\b^*$ with respect to $\aa^*$.
By induction,
its restriction to $\H^1(\aa^*,\g^*)$ is the character $\psi^A_{\b^*-\g^*}\t'^*$
where $\t'^* \in \Cc(\aa^*,\g^*)$ is the transfer of $\t'$.
Its restriction to $\U^1(\bb^*)$ is the character
$\xi^* \circ \Nrd_{B^*/E^*}$. 
It follows from \cite{VSrep1} Proposition 3.47 that $\t^* \in \Cc(\aa^*,\b^*)$.
\end{proof}

\bibliography{symplectic}

@article{JSeuler1,
  author  = {Jacquet, H. and Shaliha, J. A.},
  title   = {On {E}uler products and the classification of automorphic
   representations. {I}.},
 journal = {Amer. J. Math.},
  year    = {1981},
  volume  = {3},
  number  = {103},
  pages   = {499--558},
}

@article{BPCZ,
  author  = {Beuzart-Plessis, R. and Chaudouard, P.-H. and Zydor, M.},
  title   = {The global {G}an-{G}ross-{P}rasad conjecture for unitary groups: the
   endoscopic case},
  journal = {Publ. Math. Inst. Hautes \'{E}tudes Sci.},
  year    = {2022},
  number  = {135},
  pages   = {183--336},
}

@article{WPL,
  author  = {Waldspurger, Jean-Loup},
  title   = {La formule de {P}lancherel pour les groupes $p$-adiques (d'apr{\`e}s {H}arish-{C}han\-dra)},
  journal = {J. Inst. Math. Jussieu},
  year    = {2003},
  volume  = {2},
  number  = {2},
  pages   = {235--333},
  doi     = {10.1017/S147474800300008X}
}

@article{AG,
  title={Generalized {H}arish-{C}handra descent, {G}elfand pairs, and an {A}rchimedean analog of {J}acquet-{R}allis's theorem},
  author={Aizenbud, Avraham and Gourevitch, Dmitry and Sayag, Eitan},
 journal={Duke Math. J.},
  volume={149},
  number={3},
  pages={509--567},
  year={2009},
  publisher={Duke University Press}
}

@article{Ash-Ginzburg-Rallis,
  author  = {Ash, Avner and Ginzburg, David and Rallis, Steven},
  title   = {Vanishing periods of cusp forms over modular symbols},
  journal = {Math. Annalen},
  volume  = {296},
  number  = {1},
  pages   = {709--723},
  year    = {1993},
  doi     = {10.1007/BF01445131}
}

@article{AOSdisjoint,
  title={Disjoint pairs for {GL$_n(\mathbb{R})$} and {GL$_n(\mathbb{C})$}},
  author={Aizenbud, Avraham and Offen, Omer and Sayag, Eitan},
  journal={C. R. Acad. Sci., Ser. I},
  volume={350},
  pages={9--11},
  year={2012},
  publisher={Elsevier}
}

@article{Bjlu,
  title={Jacquet-{L}anglands et unitarisabilit{\'e}},
  author={Badulescu, A. I.},
  journal = {J. Inst. Math. Jussieu},
  volume={6},
  number={3},
  pages={349--379},
  year={2007},
  publisher={Cambridge University Press}
}

@article{BD,
  author  = {Blanc, Philippe and Delorme, Patrick},
  title   = {Vecteurs distributions ${H}$-invariants de repr{\'e}sentations induites, pour un espace sym{\'e}trique r{\'e}ductif {$p$}-adique ${G}/{H}$},
  journal = {Ann. Inst. Fourier},
  volume  = {58},
  number  = {1},
  pages   = {213--261},
  year    = {2008},
  doi     = {10.5802/aif.2349}
}

@article{Brylinski-Delorme-H-inv-form-MeroExtension-Invention,
  author  = {Brylinski, Jean-Luc and Delorme, Patrick},
  title   = {Vecteurs distributions ${H}$-invariants pour les s{\'e}ries principales g{\'e}n{\'e}ralis{\'e}es d'espaces sym{\'e}triques r{\'e}ductifs et prolongement m{\'e}romorphe d'int{\'e}grales d'{E}isenstein},
  journal = {Invent. Math.},
  volume  = {109},
  number  = {1},
  pages   = {619--664},
  year    = {1992},
  doi     = {10.1007/BF01232042}
}

@article{Badulescu-Renard,
  title={Unitary dual of {GL}$(n)$ at archimedean places and global~{J}ac\-quet-{L}anglands correspondence}, 
  author={Badulescu, A. I. and Renard, David},
  journal={Compos. Math.},
  volume={146},
  number={5},
  pages={1115--1164},
  year={2010},
  publisher={London Mathematical Society}
}

@article{BZ1,
  title={Representations of the group {GL}$(n,{F})$, where ${F}$ is a local non-{A}rchimedean field}, 
  author={Bernstein, I. N. and Zelevinsky, A. V.},
  journal={Russian Mathematical Surveys},
  volume={31},
  number={3},
  pages={1--68},
  year={1976},
  publisher={Turpion Ltd},
  note={Original citation: Uspekhi Mat. Nauk 31 (1976), no. 3(189), 5--70}
}

@incollection{GK,
  author    = {Gelfand, I. M. and Kazhdan, D. A.},
  title     = {Representations of the group {GL}$(n,{K})$ where ${K}$ is a local field},
  booktitle = {Lie groups and their representations}, 
  editor    = {Gelfand, I. M.},
  pages     = {95--118},
  publisher = {Halsted},
  address   = {New York},
  year      = {1975}
}

@article{GOSS,
  title = {Existence of {K}lyachko models for {GL}$(n,\mathbb{R})$ and {GL}$(n,\mathbb{C})$},
  author = {Gourevitch, Dmitry and Offen, Omer and Sahi, Siddhartha and Sayag, Eitan},
  journal = {J. Funct. Analysis},
  volume = {262},
  number = {8},
  pages = {3585--3601},
  year = {2012},
  publisher = {Elsevier}
}

@article{Heumos-Rallis,
  title={Symplectic-{W}hittaker models for {GL}$_n$},
  author={Heumos, Michael J and Rallis, Stephen},
  journal={Pacific J. Math.},
  volume={146},
  number={2},
  pages={247--279},
  year={1990},
  publisher={Pacific Journal of Mathematics, A Non-profit Corporation}
}

@incollection{Jrsarch,
  author    = {Jacquet, Herv{\'e}},
  title     = {Archimedean {R}ankin-{S}elberg integrals},
  booktitle = {Automorphic forms and {L}-functions {II}. {L}ocal aspects},
  series    = {Contemporary Mathematics},
  volume    = {489},
  pages     = {57--172},
  publisher = {American Mathematical Society},
  address   = {Providence, RI},
  year      = {2009},
  doi       = {10.1090/conm/489/09654}
}

@article{JPSS,
  title={Rankin-{S}elberg convolutions},
  author={Jacquet, Herv{\'e} and Piatetski-Shapiro, Ilja I and Shalika, Joseph A},
  journal={Amer. J. Math.},
  volume={105},
  number={2},
  pages={367--464},
  year={1983},
  publisher={JSTOR}
}

@article{JR,
  author  = {Jacquet, Herv{\'e} and Rallis, Stephen},
  title   = {Symplectic periods},
  journal = {J. Reine Angew. Math.},
  year    = {1992},
  volume  = {423},
  pages   = {175--197},
  doi     = {10.1515/crll.1992.423.175}
}

@article{Lag,
  title={Terme constant de fonctions sur un espace sym{\'e}trique r{\'e}ductif $p$-adique},
  author={Lagier, Nathalie},
  journal = {J. Funct. Analysis},
  volume={254},
  number={4},
  pages={1088--1145},
  year={2008},
  publisher={Elsevier},
  doi={10.1016/j.jfa.2007.08.006}
}

@article{LuMat,
  title={On completeness of local intertwining periods},
  author={Lu, Hengfei and Matringe, Nadir},
note = {preprint available at {https://arxiv.org/abs/2503.11988}},
  year={2025},
  eprint={2503.11988},
  archivePrefix={arXiv},
  primaryClass={math.RT},
  url={https://arxiv.org/abs/2503.11988}
}

@article{MatJFA,
  author  = {Matringe, Nadir},
  title   = {Gamma factors of intertwining periods and distinction for inner forms of {GL}$(n)$},
  journal = {J. Funct. Analysis},
  year    = {2021},
  volume  = {281},
  number  = {10},
  pages   = {109--223},
  doi     = {10.1016/j.jfa.2021.109223}
}

@article{MOS,
  author  = {Mitra, Arnab and Offen, Omer and Sayag, Eitan},
  title   = {Klyachko models for ladder representations},
  journal = {Doc. Math.},
  volume  = {22},
  pages   = {611--657},
  year    = {2017},
  doi     = {10.4171/dm/574}
}

@article{MOY,
  author  = {Matringe, Nadir and Offen, Omer and Yang, Chang},
  title   = {On local intertwining periods},
  journal = {J. Funct. Analysis},
  year    = {2024},
  volume  = {286},
  number  = {4},
  pages   = {110293},
  doi     = {10.1016/j.jfa.2023.110293}
}

@article{MWres,
  author  = {M{\oe}glin, Colette and Waldspurger, Jean-Loup},
  title   = {Le spectre r{\'e}siduel de {GL}$(n)$},
  journal = {Ann. sci. {\'E}c. Norm. Sup.},
  year    = {1989},
  volume  = {22},
  number  = {4},
  pages   = {605--674},
  publisher = {Soci{\'e}t{\'e} math{\'e}matique de France},
  language = {french}
}

@article{Offen-Sp1,
 author  = {Offen, Omer},
  title   = {On symplectic periods of the discrete spectrum of {GL}$(2n)$},
  journal = {Israel J. Math.},
  year    = {2006},
  volume  = {154},
  number  = {1},
  pages   = {253--298},
  doi     = {10.1007/BF02773610}
}

@article{Offen-Sp2,
 author  = {Offen, Omer},
  title   = {Residual spectrum of {GL}$(2n)$ distinguished by the symplectic group},
  journal = {Duke Math. J.},
  year    = {2006},
  volume  = {134},
  number  = {2},
  pages   = {313--357},
  doi     = {10.1215/s0012-7094-06-13423-3}
}

@article{OJNT,
 author  = {Offen, Omer},
  title   = {On parabolic induction associated with a $p$-adic symmetric space},
  journal = {J. Num. Theory},
  year    = {2017},
  volume  = {170},
  pages   = {211--227},
  doi     = {10.1016/j.jnt.2016.06.014}
}

@article{ZelevinskiAENS80,
  author  = {Zelevinsky, A. V.},
  title   = {Induced representations of reductive $\mathfrak{p}$-adic groups. {II}. {O}n irreducible representations of {GL}$(n)$},
  journal = {Ann. sci. {\'E}c. Norm. Sup},
  year    = {1980},
  volume  = {13},
  number  = {2},
  pages   = {165--210},
  doi     = {10.24033/asens.1379}
}

@article{Offen-Sayag2,
  author  = {Offen, Omer and Sayag, Eitan},
  title   = {Global mixed periods and local {K}lyachko models for the general linear group},
   JOURNAL = {Int. Math. Res. Not.},
  year    = {2008},
  volume  = {2008},
  doi     = {10.1093/imrn/rnm136},
}

@article{Ptril,
  author  = {Prasad, Dipendra},
  title   = {Trilinear forms for representations of {GL}(2) and local $\epsilon$-factors},
  journal = {Compos. Math.},
  year    = {1990},
  volume  = {75},
  number  = {1},
  pages   = {1--46},
  url     = {http://www.numdam.org/item/CM_1990__75_1_1_0/}
}

@misc{ShVe,
  author        = {Sharma, Hariom and Verma, Mahendra Kumar},
  title         = {Symplectic model for ladder and unitary representations},
  year          = {2024},
note = {preprint available at {https://arxiv.org/abs/2405.05680}},
  eprint        = {2405.05680},
  archivePrefix = {arXiv},
  primaryClass  = {math.RT},
  url           = {https://arxiv.org/abs/2405.05680}
}

@book{RRG2,
  author    = {Wallach, Nolan R.},
  title     = {Real Reductive Groups {II}},
  series    = {Pure and Applied Mathematics},
  volume    = {132},
  publisher = {Academic Press},
  address   = {Boston},
  year      = {1992},
  isbn      = {978-0-12-732961-1}
}

@article{Yamana,
  author  = {Yamana, Shunsuke},
  title   = {Symplectic periods of the continuous spectrum of {GL}$(2n)$},
  journal = {Ann. Inst. Fourier},
  volume  = {64},
  number  = {4},
  pages   = {1561--1580},
  year    = {2014},
  doi     = {10.5802/aif.2890}
}

@incollection {BushnellPIM328,
    AUTHOR = {Bushnell, Colin J.},
     TITLE = {Arithmetic of cuspidal representations},
BOOKTITLE = {Representations of reductive {$p$}-adic groups},
    SERIES = {Progr. Math.},
    VOLUME = {328},
     PAGES = {39--126},
PUBLISHER = {Birkh\"auser/Springer, Singapore},
      YEAR = {2019},
      ISBN = {978-981-13-6630-7; 978-981-13-6628-4; 978-981-13-6627-7},
   MRCLASS = {22E50 (11F70)},
  MRNUMBER = {3889759},
MRREVIEWER = {Alan\ Roche},
       DOI = {10.1007/978-981-13-6628-4\_2},
       URL = {https://doi.org/10.1007/978-981-13-6628-4_2},
}

@article {BHEffective,
    AUTHOR = {Bushnell, Colin J. and Henniart, Guy},
     TITLE = {To an effective local {L}anglands correspondence},
   JOURNAL = {Mem. Amer. Math. Soc.},
  FJOURNAL = {Memoirs of the American Mathematical Society},
    VOLUME = {231},
      YEAR = {2014},
    NUMBER = {1087},
     PAGES = {v+88},
      ISSN = {0065-9266},
      ISBN = {978-0-8218-9417-0},
   MRCLASS = {22E50 (11F70)},
  MRNUMBER = {3236840},
MRREVIEWER = {Anne-Marie H. Aubert},
       DOI = {10.1090/memo/1087},
       URL = {http://dx.doi.org/10.1090/memo/1087},
}

@book {BK,
    AUTHOR = {Bushnell, Colin J. and Kutzko, Philip C.},
     TITLE = {The admissible dual of {${\rm GL}({N})$} via compact open
              subgroups},
    SERIES = {Annals of Mathematics Studies},
    VOLUME = {129},
 PUBLISHER = {Princeton University Press},
   ADDRESS = {Princeton, NJ},
      YEAR = {1993},
     PAGES = {xii+313},
      ISBN = {0-691-03256-4; 0-691-02114-7},
   MRCLASS = {22E50 (22-02)},
  MRNUMBER = {94h:22007},
MRREVIEWER = {Mark Reeder},
}

@article {G,
    AUTHOR = {Grabitz, Martin},
     TITLE = {Simple characters for principal orders in {${\rm M}_m(D)$}},
   JOURNAL = {J. Number Theory},
  FJOURNAL = {Journal of Number Theory},
    VOLUME = {126},
      YEAR = {2007},
    NUMBER = {1},
     PAGES = {1--51},
      ISSN = {0022-314X},
     CODEN = {JNUTA9},
   MRCLASS = {11S45 (16H05)},
  MRNUMBER = {MR2347425 (2008k:11125)},
MRREVIEWER = {Ernst Kleinert},
       DOI = {10.1016/j.jnt.2006.06.008},
       URL = {http://dx.doi.org/10.1016/j.jnt.2006.06.008},
}

@article {HenniartJALG01,
    AUTHOR = {Henniart, Guy},
     TITLE = {Repr\'esentations des groupes r\'eductifs {$p$}-adiques et de
              leurs sous-groupes distingu\'es cocompacts},
   JOURNAL = {J. Algebra},
  FJOURNAL = {Journal of Algebra},
    VOLUME = {236},
      YEAR = {2001},
    NUMBER = {1},
     PAGES = {236--245},
      ISSN = {0021-8693,1090-266X},
   MRCLASS = {22E50},
  MRNUMBER = {1808353},
MRREVIEWER = {Bertrand\ Lemaire},
       DOI = {10.1006/jabr.2000.8497},
       URL = {https://doi.org/10.1006/jabr.2000.8497},
}

@article {SS,
    AUTHOR = {S{\'e}cherre, Vincent and Stevens, Shaun},
     TITLE = {Repr\'esentations lisses de {${\rm GL}_m(D)$}. {IV}.
              {R}epr\'esentations supercuspidales},
   JOURNAL = {J. Inst. Math. Jussieu},
  FJOURNAL = {Journal of the Institute of Mathematics of Jussieu. JIMJ.
              Journal de l'Institut de Math\'ematiques de Jussieu},
    VOLUME = {7},
      YEAR = {2008},
    NUMBER = {3},
     PAGES = {527--574},
      ISSN = {1474-7480},
   MRCLASS = {22E50},
  MRNUMBER = {MR2427423 (2009d:22023)},
       DOI = {10.1017/S1474748008000078},
       URL = {http://dx.doi.org/10.1017/S1474748008000078},
}

@article {VSrep2,
    AUTHOR = {S{\'e}cherre, Vincent},
     TITLE = {Repr\'esentations lisses de {${\rm GL}(m,D)$}. {II}.
              {$\beta$}-extensions},
   JOURNAL = {Compos. Math.},
  FJOURNAL = {Compositio Mathematica},
    VOLUME = {141},
      YEAR = {2005},
    NUMBER = {6},
     PAGES = {1531--1550},
      ISSN = {0010-437X},
   MRCLASS = {22E50},
  MRNUMBER = {MR2188448 (2006j:22018)},
MRREVIEWER = {David Manderscheid},
       DOI = {10.1112/S0010437X05001429},
       URL = {http://dx.doi.org/10.1112/S0010437X05001429},
}

@article {VSrep1,
    AUTHOR = {S{\'e}cherre, Vincent},
     TITLE = {Repr\'esentations lisses de {${\rm GL}(m,D)$}. {I}.
              {C}aract\`eres simples},
   JOURNAL = {Bull. Soc. Math. France},
  FJOURNAL = {Bulletin de la Soci\'et\'e Math\'ematique de France},
    VOLUME = {132},
      YEAR = {2004},
    NUMBER = {3},
     PAGES = {327--396},
      ISSN = {0037-9484},
     CODEN = {BSMFAA},
   MRCLASS = {22E50},
  MRNUMBER = {MR2081220 (2005f:22027)},
MRREVIEWER = {David Manderscheid},
}

@article {VSANT19,
    AUTHOR = {S{\'e}cherre, Vincent},
     TITLE = {Supercuspidal representations of {${\rm GL}_n(\rm F)$}
              distinguished by a {G}alois involution},
   JOURNAL = {Algebra Number Theory},
  FJOURNAL = {Algebra \& Number Theory},
    VOLUME = {13},
      YEAR = {2019},
    NUMBER = {7},
     PAGES = {1677--1733},
      ISSN = {1937-0652,1944-7833},
   MRCLASS = {22E50 (11F70 11F85)},
  MRNUMBER = {4009674},
MRREVIEWER = {Dubravka\ Ban},
       DOI = {10.2140/ant.2019.13.1677},
       URL = {https://doi.org/10.2140/ant.2019.13.1677},
}

@article {D,
    AUTHOR = {Dat, Jean-Francois},
     TITLE = {Finitude pour les repr\'esentations lisses de groupes
              {$p$}-adiques},
   JOURNAL = {J. Inst. Math. Jussieu},
  FJOURNAL = {Journal of the Institute of Mathematics of Jussieu. JIMJ.
              Journal de l'Institut de Math\'ematiques de Jussieu},
    VOLUME = {8},
      YEAR = {2009},
    NUMBER = {2},
     PAGES = {261--333},
      ISSN = {1474-7480},
   MRCLASS = {22E50},
  MRNUMBER = {2485794 (2010e:22007)},
       DOI = {10.1017/S1474748008000054},
       URL = {http://dx.doi.org/10.1017/S1474748008000054},
}

@preamble{
   "\def\cprime{$'$} "
}

@book {L,
    AUTHOR = {Lusztig, George},
     TITLE = {Representations of finite {C}hevalley groups},
    SERIES = {CBMS Regional Conference Series in Mathematics},
    VOLUME = {39},
      NOTE = {Expository lectures from the CBMS Regional Conference held at
              Madison, Wis., August 8--12, 1977},
 PUBLISHER = {American Mathematical Society},
   ADDRESS = {Providence, R.I.},
      YEAR = {1978},
     PAGES = {v+48},
      ISBN = {0-8218-1689-6},
   MRCLASS = {20G05},
  MRNUMBER = {518617 (80f:20045)},
MRREVIEWER = {David B. Surowski},
}

@article {H,
    AUTHOR = {Henniart, Guy},
     TITLE = {Une preuve simple des conjectures de {L}anglands pour {${\rm
              GL}(n)$} sur un corps {$p$}-adique},
   JOURNAL = {Invent. Math.},
  FJOURNAL = {Inventiones Mathematicae},
    VOLUME = {139},
      YEAR = {2000},
    NUMBER = {2},
     PAGES = {439--455},
      ISSN = {0020-9910},
     CODEN = {INVMBH},
   MRCLASS = {11F70 (11R39 11S37 22E50 22E55)},
  MRNUMBER = {1738446 (2001e:11052)},
MRREVIEWER = {Dihua Jiang},
       DOI = {10.1007/s002220050012},
       URL = {http://dx.doi.org/10.1007/s002220050012},
}

@article {MSjl,
    AUTHOR = {M{\'\i}nguez, Alberto and S{\'e}cherre, Vincent},
     TITLE = {Correspondance de {J}acquet-{L}anglands locale et congruences mod $\ell$},
   JOURNAL = {Invent. Math.},
  FJOURNAL = {Inventiones Mathematicae},
    VOLUME = {208},
      YEAR = {2017},
     PAGES = {553--651},
}

@article {GanLomeli,
    AUTHOR = {Gan, Wee Teck and Lomel\'i, Luis},
     TITLE = {Globalization of supercuspidal representations over function
              fields and applications},
   JOURNAL = {J. Eur. Math. Soc.},
  FJOURNAL = {Journal of the European Mathematical Society},
    VOLUME = {20},
      YEAR = {2018},
    NUMBER = {11},
     PAGES = {2813--2858},
      ISSN = {1435-9855,1435-9863},
   MRCLASS = {11F70 (22E55)},
  MRNUMBER = {3861809},
MRREVIEWER = {Baiying\ Liu},
       DOI = {10.4171/JEMS/825},
       URL = {https://doi.org/10.4171/JEMS/825},
}

@article {BHLS,
    AUTHOR = {Badulescu, A. I. and Henniart, G. and Lemaire, B. and
              S\'echerre, V.},
     TITLE = {Sur le dual unitaire de {${\rm GL}_r(D)$}},
   JOURNAL = {Amer. J. Math.},
  FJOURNAL = {American Journal of Mathematics},
    VOLUME = {132},
      YEAR = {2010},
    NUMBER = {5},
     PAGES = {1365--1396},
      ISSN = {0002-9327,1080-6377},
   MRCLASS = {22E50 (20G25)},
  MRNUMBER = {2732351},
MRREVIEWER = {Maarten\ Sander\ Solleveld},
       DOI = {10.1353/ajm.2010.0009},
       URL = {https://doi.org/10.1353/ajm.2010.0009},
}

@article {Badulescu,
    AUTHOR = {Badulescu, A. I.},
     TITLE = {Correspondance de {J}acquet-{L}anglands pour les corps locaux
              de caract\'eristique non nulle},
   JOURNAL = {Ann. Sci. \'Ecole Norm. Sup. (4)},
  FJOURNAL = {Annales Scientifiques de l'\'Ecole Normale Sup\'erieure.
              Quatri\`eme S\'erie},
    VOLUME = {35},
      YEAR = {2002},
    NUMBER = {5},
     PAGES = {695--747},
      ISSN = {0012-9593},
   MRCLASS = {11F70 (22E50)},
  MRNUMBER = {1951441},
       DOI = {10.1016/S0012-9593(02)01106-0},
       URL = {https://doi.org/10.1016/S0012-9593(02)01106-0},
}

@incollection {DKV,
    AUTHOR = {Deligne, P. and Kazhdan, D. and Vign\'eras, M.-F.},
     TITLE = {Repr\'esentations des alg\`ebres centrales simples
              {$p$}-adiques},
 BOOKTITLE = {Representations of reductive groups over a local field},
    SERIES = {Travaux en Cours},
     PAGES = {33--117},
 PUBLISHER = {Hermann, Paris},
      YEAR = {1984},
      ISBN = {2-7056-5989-7},
   MRCLASS = {11F70 (11F72 11S37 16A64 22E50)},
  MRNUMBER = {771672},
MRREVIEWER = {Joe\ Repka},
}

@article {Rogawski,
    AUTHOR = {Rogawski, Jonathan D.},
     TITLE = {Representations of {${\rm GL}(n)$}\ and division algebras over
              a {$p$}-adic field},
   JOURNAL = {Duke Math. J.},
  FJOURNAL = {Duke Mathematical Journal},
    VOLUME = {50},
      YEAR = {1983},
    NUMBER = {1},
     PAGES = {161--196},
      ISSN = {0012-7094,1547-7398},
   MRCLASS = {12B27 (10D40 12B35 22E50)},
  MRNUMBER = {700135},
MRREVIEWER = {Joe\ Repka},
       URL = {http://projecteuclid.org/euclid.dmj/1077303004},
}

@book {BourbakiAlg8,
    AUTHOR = {Bourbaki, N.},
     TITLE = {{\'E}l{\'e}ments de math{\'e}matique. {A}lg\`ebre. {C}hapitre 8.
              {M}odules et anneaux semi-simples},
 PUBLISHER = {Springer, Berlin},
      YEAR = {2012},
     PAGES = {x+489},
      ISBN = {978-3-540-35315-7; 978-3-540-35316-4},
   MRCLASS = {16-01},
  MRNUMBER = {3027127},
       DOI = {10.1007/978-3-540-35316-4},
       URL = {https://doi.org/10.1007/978-3-540-35316-4},
}

@article {BHJL3, 
    AUTHOR = {Bushnell, Colin J. and Henniart, Guy},
     TITLE = {The essentially tame {J}acquet-{L}anglands correspondence for
              inner forms of {${\rm GL}(n)$}},
   JOURNAL = {Pure Appl. Math. Q.},
  FJOURNAL = {Pure and Applied Mathematics Quarterly},
    VOLUME = {7},
      YEAR = {2011},
    NUMBER = {3},
     PAGES = {469--538},
      ISSN = {1558-8599,1558-8602},
   MRCLASS = {22E50},
  MRNUMBER = {2848585},
MRREVIEWER = {Maarten\ Sander\ Solleveld},
       DOI = {10.4310/PAMQ.2011.v7.n3.a2},
       URL = {https://doi.org/10.4310/PAMQ.2011.v7.n3.a2},
}

@article {Dotto,
    AUTHOR = {Dotto, Andrea},
     TITLE = {The inertial {J}acquet-{L}anglands correspondence},
   JOURNAL = {J. Reine Angew. Math.},
  FJOURNAL = {Journal f\"ur die Reine und Angewandte Mathematik. [Crelle's
              Journal]},
    VOLUME = {784},
      YEAR = {2022},
     PAGES = {177--214},
      ISSN = {0075-4102,1435-5345},
   MRCLASS = {22E50 (11F70)},
  MRNUMBER = {4388335},
MRREVIEWER = {Alan\ Roche},
       DOI = {10.1515/crelle-2021-0071},
       URL = {https://doi.org/10.1515/crelle-2021-0071},
}

@book {Grove,
    AUTHOR = {Grove, Larry C.},
     TITLE = {Classical groups and geometric algebra},
    SERIES = {Graduate Studies in Mathematics},
    VOLUME = {39},
 PUBLISHER = {American Mathematical Society, Providence, RI},
      YEAR = {2002},
     PAGES = {x+169},
      ISBN = {0-8218-2019-2},
   MRCLASS = {20G15 (11E57)},
  MRNUMBER = {1859189},
MRREVIEWER = {Curtis\ D.\ Bennett},
       DOI = {10.1090/gsm/039},
       URL = {https://doi.org/10.1090/gsm/039},
}

@article {OffenSayagJFA08,
    AUTHOR = {Offen, Omer and Sayag, Eitan},
     TITLE = {Uniqueness and disjointness of {K}lyachko models},
   JOURNAL = {J. Funct. Anal.},
  FJOURNAL = {Journal of Functional Analysis},
    VOLUME = {254},
      YEAR = {2008},
    NUMBER = {11},
     PAGES = {2846--2865},
      ISSN = {0022-1236,1096-0783},
   MRCLASS = {22E50},
  MRNUMBER = {2414223},
MRREVIEWER = {Loren\ R.\ Spice},
       DOI = {10.1016/j.jfa.2008.01.004},
       URL = {https://doi.org/10.1016/j.jfa.2008.01.004},
}

@article {PrasadCMAT99,
    AUTHOR = {Prasad, Dipendra},
     TITLE = {Distinguished representations for quadratic extensions},
  journal = {Compos. Math.},
  FJOURNAL = {Compositio Mathematica},
    VOLUME = {119},
      YEAR = {1999},
    NUMBER = {3},
     PAGES = {335--345},
      ISSN = {0010-437X,1570-5846},
   MRCLASS = {22E55 (11F70 22E35)},
  MRNUMBER = {1727136},
MRREVIEWER = {Jeff\ Hakim},
       DOI = {10.1023/A:1001735724945},
       URL = {https://doi.org/10.1023/A:1001735724945},
}

@article {VSautodual,
    AUTHOR = {S{\'e}cherre, Vincent},
     TITLE = {Repr\'esentations cuspidales de {${\rm GL}_r(D)$}
              distingu\'ees par une involution int\'erieure},
   JOURNAL = {Ann. Sci. \'Ec. Norm. Sup\'er. (4)},
  FJOURNAL = {Annales Scientifiques de l'\'Ecole Normale Sup\'erieure.
              Quatri\`eme S\'erie},
    VOLUME = {57},
      YEAR = {2024},
    NUMBER = {4},
     PAGES = {961--1038},
      ISSN = {0012-9593,1873-2151},
   MRCLASS = {11F70 (20G25 22E50)},
  MRNUMBER = {4773305},
}

@article {SkodlerackRT20,
    AUTHOR = {Skodlerack, Daniel},
     TITLE = {Semisimple characters for inner forms {II}: {Q}uaternionic
              forms of {$p$}-adic classical groups ({$p$} odd)},
   JOURNAL = {Represent. Theory},
  FJOURNAL = {Representation Theory. An Electronic Journal of the American
              Mathematical Society},
    VOLUME = {24},
      YEAR = {2020},
     PAGES = {323--359},
      ISSN = {1088-4165},
   MRCLASS = {22E50 (11E57 11E95 20G05)},
  MRNUMBER = {4128451},
MRREVIEWER = {Ivan\ Mati\'c},
       DOI = {10.1090/ert/544},
       URL = {https://doi.org/10.1090/ert/544},
}

@article {Tadic,
    AUTHOR = {Tadi\'c, Marko},
     TITLE = {Induced representations of {${\rm GL}(n,A)$} for {$p$}-adic
              division algebras {$A$}},
   JOURNAL = {J. Reine Angew. Math.},
  FJOURNAL = {Journal f\"ur die Reine und Angewandte Mathematik. [Crelle's
              Journal]},
    VOLUME = {405},
      YEAR = {1990},
     PAGES = {48--77},
      ISSN = {0075-4102,1435-5345},
   MRCLASS = {22E55},
  MRNUMBER = {1040995},
MRREVIEWER = {L.\ Corwin},
       DOI = {10.1515/crll.1990.405.48},
       URL = {https://doi.org/10.1515/crll.1990.405.48},
}

@article {TadicUnitaryDual,
    AUTHOR = {Tadi\'c, Marko},
     TITLE = {Classification of unitary representations in irreducible
     representations of general linear group (non-Archimedean case)},
   JOURNAL = {Ann. Sci. \'Ecole Norm. Sup. (4)},
  FJOURNAL = {Annales Scientifiques de l'\'Ecole Normale Sup\'erieure.
              Quatri\`eme S\'erie},
    VOLUME = {19},
      YEAR = {1986},
     PAGES = {335--382},
}

@article {Verma,
    AUTHOR = {Verma, Mahendra Kumar},
     TITLE = {On symplectic periods for inner forms of {${\rm GL}_n$}},
   JOURNAL = {Math. Res. Lett.},
  FJOURNAL = {Mathematical Research Letters},
    VOLUME = {25},
      YEAR = {2018},
    NUMBER = {1},
     PAGES = {309--334},
      ISSN = {1073-2780,1945-001X},
   MRCLASS = {22E40},
  MRNUMBER = {3818625},
MRREVIEWER = {O.\ V.\ Shvartsman},
       DOI = {10.4310/mrl.2018.v25.n1.a14},
       URL = {https://doi.org/10.4310/mrl.2018.v25.n1.a14},
}

@article {ZouO,
    AUTHOR = {Zou, Jiandi},
     TITLE = {Supercuspidal representations of {${\rm GL}_n (F)$}
              distinguished by an orthogonal involution},
   JOURNAL = {J. Number Theory},
  FJOURNAL = {Journal of Number Theory},
    VOLUME = {241},
      YEAR = {2022},
     PAGES = {1--56},
      ISSN = {0022-314X,1096-1658},
   MRCLASS = {22E50 (11F70)},
  MRNUMBER = {4472433},
MRREVIEWER = {Hengfei\ Lu},
       DOI = {10.1016/j.jnt.2022.03.009},
       URL = {https://doi.org/10.1016/j.jnt.2022.03.009},
}

@article {PrasadSchulzePillot,
    AUTHOR = {Prasad, Dipendra and Schulze-Pillot, Rainer},
     TITLE = {Generalised form of a conjecture of {J}acquet and a
     local~con\-sequence},
   JOURNAL = {J. Reine Angew. Math.},
  FJOURNAL = {Journal f\"ur die Reine und Angewandte Mathematik. [Crelle's
              Journal]},
    VOLUME = {616},
      YEAR = {2008},
     PAGES = {219--236},
      ISSN = {0075-4102,1435-5345},
   MRCLASS = {11F70 (11F67 22E55)},
  MRNUMBER = {2369492},
MRREVIEWER = {Wee\ Teck\ Gan},
       DOI = {10.1515/CRELLE.2008.023},
       URL = {https://doi.org/10.1515/CRELLE.2008.023},
}

@article {LabesseSchwermer,
    AUTHOR = {Labesse, Jean-Pierre and Schwermer, Joachim},
     TITLE = {Central morphisms and cuspidal automorphic representations},
   JOURNAL = {J. Number Theory},
  FJOURNAL = {Journal of Number Theory},
    VOLUME = {205},
      YEAR = {2019},
     PAGES = {170--193},
      ISSN = {0022-314X,1096-1658},
   MRCLASS = {11F70},
  MRNUMBER = {3996349},
MRREVIEWER = {Hengfei\ Lu},
       DOI = {10.1016/j.jnt.2019.05.005},
       URL = {https://doi.org/10.1016/j.jnt.2019.05.005},
}

@article {BadulescuJLglobal,
    AUTHOR = {Badulescu, A. I.},
     TITLE = {Global {J}acquet-{L}anglands correspondence, multiplicity one
              and classification of automorphic representations},
      NOTE = {with an appendix by N. Grbac},
   JOURNAL = {Invent. Math.},
  FJOURNAL = {Inventiones Mathematicae},
    VOLUME = {172},
      YEAR = {2008},
    NUMBER = {2},
     PAGES = {383--438},
      ISSN = {0020-9910,1432-1297},
   MRCLASS = {22E50 (11F70)},
  MRNUMBER = {2390289},
MRREVIEWER = {David\ A.\ Renard},
       DOI = {10.1007/s00222-007-0104-8},
       URL = {https://doi.org/10.1007/s00222-007-0104-8},
}

@article {BadulescuRoche,
    AUTHOR = {Badulescu, A. I. and Roche, Philippe},
     TITLE = {Global {J}acquet-{L}anglands correspondence for division
              algebras in characteristic {$p$}},
   JOURNAL = {Int. Math. Res. Not.},
  FJOURNAL = {International Mathematics Research Notices},
      YEAR = {2017},
    NUMBER = {7},
     PAGES = {2172--2206},
      ISSN = {1073-7928,1687-0247},
   MRCLASS = {11F70 (11S37 16K99)},
  MRNUMBER = {3658194},
MRREVIEWER = {Arnab\ Mitra},
       DOI = {10.1093/imrn/rnw094},
       URL = {https://doi.org/10.1093/imrn/rnw094},
}

@article {SZlevel01,
    AUTHOR = {Silberger, Allan J. and Zink, Ernst-Wilhelm},
     TITLE = {Weak explicit matching for level zero discrete series of unit
              groups of {$p$}-adic simple algebras},
   JOURNAL = {Canad. J. Math.},
  FJOURNAL = {Canadian Journal of Mathematics. Journal Canadien de
              Math\'ematiques},
    VOLUME = {55},
      YEAR = {2003},
    NUMBER = {2},
     PAGES = {353--378},
      ISSN = {0008-414X,1496-4279},
   MRCLASS = {22E50 (11R39)},
  MRNUMBER = {1969796},
MRREVIEWER = {Paul\ Broussous},
       DOI = {10.4153/CJM-2003-016-4},
       URL = {https://doi.org/10.4153/CJM-2003-016-4},
}

@article {SZlevel02,
    AUTHOR = {Silberger, Allan J. and Zink, Ernst-Wilhelm},
     TITLE = {An explicit matching theorem for level zero discrete series of
              unit groups of {$p$}-adic simple algebras},
   JOURNAL = {J. Reine Angew. Math.},
  FJOURNAL = {Journal f\"ur die Reine und Angewandte Mathematik. [Crelle's
              Journal]},
    VOLUME = {585},
      YEAR = {2005},
     PAGES = {173--235},
      ISSN = {0075-4102,1435-5345},
   MRCLASS = {16K20 (11S45)},
  MRNUMBER = {2164626},
MRREVIEWER = {Anne-Marie\ H.\ Aubert},
       DOI = {10.1515/crll.2005.2005.585.173},
       URL = {https://doi.org/10.1515/crll.2005.2005.585.173},
}

@article {Gow,
    AUTHOR = {Gow, Roderick},
     TITLE = {Two multiplicity-free permutation representations of the
              general linear group {${\rm GL}(n,q^2)$}},
   JOURNAL = {Math. Z.},
  FJOURNAL = {Mathematische Zeitschrift},
    VOLUME = {188},
      YEAR = {1984},
    NUMBER = {1},
     PAGES = {45--54},
      ISSN = {0025-5874,1432-1823},
   MRCLASS = {20C15 (20G40)},
  MRNUMBER = {767361},
MRREVIEWER = {Richard\ Dipper},
       DOI = {10.1007/BF01163871},
       URL = {https://doi.org/10.1007/BF01163871},
}

@article {BHlevel0,
    AUTHOR = {Bushnell, Colin J. and Henniart, Guy},
     TITLE = {Explicit functorial correspondences for level zero
              representations of {$p$}-adic linear groups},
   JOURNAL = {J. Number Theory},
  FJOURNAL = {Journal of Number Theory},
    VOLUME = {131},
      YEAR = {2011},
    NUMBER = {2},
     PAGES = {309--331},
      ISSN = {0022-314X,1096-1658},
   MRCLASS = {22E50},
  MRNUMBER = {2736858},
MRREVIEWER = {Dubravka\ Ban},
       DOI = {10.1016/j.jnt.2010.09.003},
       URL = {https://doi.org/10.1016/j.jnt.2010.09.003},
}

@article {SSCMAT19,
    AUTHOR = {S\'echerre, Vincent and Stevens, Shaun},
     TITLE = {Towards an explicit local {J}acquet-{L}anglands correspondence
              beyond the cuspidal case},
   JOURNAL = {Compos. Math.},
  FJOURNAL = {Compositio Mathematica},
    VOLUME = {155},
      YEAR = {2019},
    NUMBER = {10},
     PAGES = {1853--1887},
      ISSN = {0010-437X,1570-5846},
   MRCLASS = {22E50 (11F70)},
  MRNUMBER = {4000000},
MRREVIEWER = {Amiya\ Kumar\ Mondal},
       DOI = {10.1112/S0010437X19007486},
       URL = {https://doi.org/10.1112/S0010437X19007486},
}

@article{NRV,
    AUTHOR = {Kurinczuk, Robert and Matringe, Nadir and  S{\'e}cherre, Vincent},
     TITLE = {Cuspidal {$\ell$}-modular representations of {$\mathrm{GL}_n(F)$} distinguished by a {G}alois involution},
   JOURNAL = {Forum of Math., Sigma},
  FJOURNAL = {Forum of Mathematics, Sigma},
    VOLUME = {13},
      YEAR = {2025},
    NUMBER = {e48},
     PAGES = {1--41},    
   MRCLASS = {22E50 (11F70, 11F85)},
       DOI = {10.1017/fms.2024.140},
       URL = {https://doi.org/10.1017/fms.2024.140},
}

@article{SX,
  author  = {Suzuki, Miyu and Xue, Hang},
  title   = {Linear intertwining periods and epsilon dichotomy for linear models},
  journal = {Math. Annalen},
  year    = {2024},
  volume  = {388},
  number  = {4},
  pages   = {3589--3626},
  doi     = {10.1007/s00208-023-02615-9}
}

\end{document}